\DeclareMathOperator{\diam}{Diam}
\newcommand{\J}{\mathcal{J}}
\newcommand{\Jmin}{\mathcal{J}_{\tiny\mbox{min}}}
\newcommand{\X}{\mathcal{X}}
\newcommand{\Y}{\mathcal{Y}}
\newcommand{\calU}{\mathcal{U}}
\newcommand{\calV}{\mathcal{V}}
\newcommand{\XN}{\mathcal{X}^{\mathbb{N}}}
\newcommand{\YN}{\mathcal{Y}^{\mathbb{N}}}
\newcommand{\calO}{\mathcal{O}}
\newcommand{\calM}{\mathcal{M}}
\newcommand{\cX}{c_{\scriptscriptstyle{\X}}}
\newcommand{\cY}{c_{\scriptscriptstyle{\Y}}}
\newcommand{\cU}{c_{\scriptscriptstyle{\calU}}}
\newcommand{\cV}{c_{\scriptscriptstyle{\calV}}}
\newcommand{\cXk}{c_{\scriptscriptstyle{\X} \!, k}}
\newcommand{\cYk}{c_{\scriptscriptstyle{\Y} \!, k}}
\newcommand{\ckXk}{(c_k)_{\scriptscriptstyle{\X^k}}}
\newcommand{\ckYk}{(c_k)_{\scriptscriptstyle{\Y^k}}}
\newcommand{\ot}{T}
\newcommand{\otc}{\ot_c}
\newcommand{\otck}{\ot_{c_k}}
\newcommand{\otcbar}{\ot_{\overline{c}}}
\newcommand{\eot}{\ot^\eta}
\newcommand{\eotc}{\ot^\eta_c}
\newcommand{\eotck}{\ot^\eta_{c_k}}
\newcommand{\otcXk}{\ot_{\cXk}}
\newcommand{\otcYk}{\ot_{\cYk}}
\newcommand{\otcU}{\ot_{\cU}}
\newcommand{\oj}{S}
\newcommand{\ojc}{\oj_c}
\newcommand{\ojcbar}{\oj_{\overline{c}}}
\newcommand{\eoj}{\oj^\eta}
\newcommand{\eojc}{\oj^\eta_c}
\newcommand{\ojhat}{\hat{\oj}}
\newcommand{\ojhatk}{\ojhat_k}
\newcommand{\ojhatkn}{\ojhat_{k,n}}
\newcommand{\ojhatknn}{\ojhat_{k(n),n}}
\newcommand{\estoj}{\hat{\lambda}}
\newcommand{\estojkn}{\estoj^{k,n}}
\newcommand{\estojknn}{\estoj^{k(n),n}}
\newcommand{\eojhat}{\tilde{\oj}}
\newcommand{\eojhatk}{\eojhat_k}
\newcommand{\eojhatkn}{\eojhat_{k,n}}
\newcommand{\eojhatetak}{\eojhatk^\eta}
\newcommand{\eojhatetakn}{\eojhatkn^\eta}
\newcommand{\esteoj}{\tilde{\lambda}}
\newcommand{\esteojkn}{\esteoj^{k,n}}
\newcommand{\esteojetakn}{\esteoj^{\eta, k, n}}
\newcommand{\tf}{\tilde{f}}
\newcommand{\tg}{\tilde{g}}
\newcommand{\tfb}{\tilde{f}_\beta}
\newcommand{\tga}{\tilde{g}_\alpha}
\newcommand{\tgap}{\tilde{g}_{\alpha'}}
\newcommand{\tU}{\tilde{U}}
\newcommand{\tV}{\tilde{V}}
\newcommand{\bfu}{\mathbf{u}}
\newcommand{\bfy}{\mathbf{y}}
\newcommand{\bfx}{\mathbf{x}}
\newcommand{\bbE}{\mathbb{E}}
\newcommand{\bbN}{\mathbb{N}}
\newcommand{\bbR}{\mathbb{R}}
\newcommand{\1}{\mathbbm{1}}
\newcommand{\eqd}{\stackrel{d}{=}}
\newcommand{\timesdots}{\! \times \! \cdots \! \times \!}
\newtheorem{thm}{Theorem}
\newtheorem*{thm*}{Theorem}
\newtheorem{prop}[thm]{Proposition}
\newtheorem{lem}[thm]{Lemma}
\newtheorem{ethm}{Theorem}
\newtheorem{elem}[ethm]{Lemma}
\newtheorem{defn}{Definition}
\theoremstyle{remark}
\newtheorem{rem}{Remark}
\newtheorem{ex}{Example} 
\theoremstyle{plain}
\numberwithin{equation}{section}
\begin{document}
\title{\textbf{Estimation of Stationary Optimal Transport Plans}}
\author{Kevin O'Connor \\ UNC-Chapel Hill \and Kevin McGoff \\ UNC-Charlotte \and Andrew B Nobel \\ UNC-Chapel Hill}
\maketitle

\begin{abstract}
We study optimal transport for stationary stochastic processes taking values in finite spaces.
In order to reflect the stationarity of the underlying processes, we restrict attention to stationary couplings, also known as joinings.
The resulting optimal joining problem captures differences in the long run average behavior of the processes of interest.
We introduce estimators of both optimal joinings and the optimal joining cost, and we establish consistency of the estimators under mild conditions.  
Furthermore, under stronger mixing assumptions we establish finite-sample error rates for the estimated optimal joining cost that extend the best known results in the iid case.
Finally, we extend the consistency and rate analysis to an entropy-penalized version of the optimal joining problem.
\end{abstract}


\section{Introduction}
\label{sec:introduction}
The application and theory of optimal transport has recently received a great deal of attention in statistics and machine 
learning.  This work has resulted in
novel approaches to statistical estimation \citep{chen2020wasserstein, janati2019wasserstein, frogner2015learning, bassetti2006minimum, bernton2019parameter}, 
deep generative modeling \citep{arjovsky2017wasserstein, tolstikhin2018wasserstein}, 
clustering \citep{ho2017multilevel, laclau2017co, mi2018variational}, cell modeling \citep{schiebinger2019optimal}, and other applications \citep{courty2016optimal, courty2016optimalb, flamary2016optimal, evans2012phylogenetic, wang2010optimal}.  
In most of this work the objects under study (for example, images, text documents, graphs, and point clouds) 
are regarded as static and do not evolve over time.  
Accordingly, the distributions and cost functions appearing in the optimal transport problem capture the behavior of these objects at a fixed
point in time.  
In this static setting the statistical properties of the optimal transport problem, such as definition of estimators, consistency, and rates of convergence, have been well-studied \citep{panaretos2019statistical, peyre2019computational, tameling2019empirical, sommerfeld2016inference, rippl2016limit, hutter2021minimax, deb2021rates, perrot2016mapping, seguy2017large}.

In contrast with the static situation, we are interested in optimal transport problems in settings where the objects of interest are processes 
that evolve dynamically over time.
Examples include the alignment or generative modeling of text sequences or musical scores, or hybrid settings 
involving dynamic text and images.
Other examples include transportation of goods between a set of manufacturers and a set of retailers when
supply and demand vary over time in a stochastic fashion, or the comparison of brain networks observed at multiple points in time.
Problems involving sequential data or objects varying over time are pervasive in statistics and machine learning.
While standard optimal transport techniques can be applied to such problems, they
do not account for the structure of the underlying measures, which reflect dynamic processes rather than static quantities. 

In this paper we investigate optimal transport for finite alphabet stationary ergodic processes, together with cost functions that measure differences at a single time point (or finitely many time points).
Optimal transport for stationary processes is a special case of the ordinary optimal transport problem 
in which the distributions of interest are shift invariant measures on infinite product spaces (the sequence spaces associated with the given processes).  
As such, existing methods and theory apply. 
However, it is easy to show that a coupling of two stationary processes need not be stationary, and the same is true of optimal transport plans (see Examples \ref{ex:nonstationary_coupling} and \ref{ex:ot_is_bad} below).  
To address these, and other, issues arising in the general setting, we restrict our attention to {\em stationary} couplings of stationary processes.  
This seemingly mild restriction has far reaching consequences.

In restricting to stationary couplings, also known as {\em joinings}, we draw connections between optimal transport and ergodic theory.
In particular, properties and theoretical applications of joinings have been studied in ergodic theory for many years.
Given two stationary processes, the {\em optimal joining problem} is the problem of finding a joining
of the processes having minimal expected cost. 
The primary focus of this paper is estimating an optimal joining, and the associated optimal joining cost, 
of two finite alphabet stationary ergodic processes
using $n$ observations from each process.  
Roughly speaking, we use the available observations to estimate the $k$-dimensional distribution of each process, find an optimal coupling of these $k$-dimensional estimates, and then use this coupling to construct a joint process that is stationary.
In order to ensure that the constructed process converges to an optimal joining, it is necessary to balance estimating the $k$-dimensional distribution by letting the sample size $n$ grow and learning the dependence structure of the optimal joining by letting $k$ grow.
Thus the task of choosing an appropriate sequence $\{k(n)\}$ of block sizes indexed by sample size is critical for consistently estimating an optimal joining.

Under the stated assumptions, we show that there exists a sequence $\{k(n)\}$ for which the corresponding joint processes will converge to an optimal joining of the marginal processes, and the expected cost of the joint processes will converge to the cost of the optimal joining (Theorem \ref{thm:consistent_estimation_of_oj}).
This result generalizes existing results regarding the estimation of optimal joining costs and, to our knowledge, provides the first consistency result for estimating optimal joinings.
Under additional mixing assumptions on the observed processes, we identify an explicit growth rate for $k$ and obtain rates of convergence for estimates of the optimal joining cost (Theorem \ref{thm:abstract_rates}).   
To the best of our knowledge, these are the first finite-sample bounds for estimation of an optimal joining cost.
In the iid case, optimal joining and optimal transport coincide, and we recover existing, state-of-the-art bounds for estimation of the optimal transport cost.  
As special cases of our results we obtain new, finite-sample bounds for estimation of the $\overline{d}$- and $\overline{\rho}$-distances between stationary ergodic processes.

In recent years, there has been a substantial amount of work on regularized optimal transport, in which the regularization is obtained by adding an entropic penalty to the usual optimal transport cost.
As with (unregularized) optimal transport, the regularized problem has been primarily studied in static settings.
In this work, we bring these ideas to a dynamic setting.
More specifically, we propose and analyze a regularized form of the optimal joining 
problem, which is obtained by adding a penalty based on the entropy rate of a process to the expected cost.
We then extend our estimation scheme from the standard optimal joining problem to the regularized problem, 
and we establish both consistency and rate results for the resulting estimates (Theorems \ref{thm:consistent_estimation_of_eoj} and \ref{thm:eoj_error_bound}).
Existing algorithms for computing regularized optimal transport plans may be applied to compute the proposed estimates in the regularized case more efficiently compared to the unregularized case.


\paragraph{Organization of the Paper.}
The rest of the paper is organized as follows.
Background on optimal transport, optimal joinings, some initial results, and related work are presented in the next section.
In Section \ref{sec:estimate_oj}, we detail the proposed estimation scheme for an optimal 
joining and its expected cost and state our main consistency result.
Statement of our finite sample error bound under mixing assumptions, and a corollary, are presented in Section \ref{sec:rates}.
In Section \ref{sec:entropic_oj}, we introduce the entropic optimal joining problem and discuss how our estimation scheme, consistency result, and error bound extend to this problem.
We close with a discussion of our results in Section \ref{sec:discussion}.
Proofs of the main results are presented in Section \ref{sec:proofs}.

\section{Preliminaries and First Results}\label{sec:preliminaries}

We begin by defining the optimal transport problem.
Let $\calU$ and $\calV$ be metric spaces and let $\calM(\calU)$ and $\calM(\calV)$ denote the set of Borel probability measures on these spaces.
Recall that a Borel probability measure $\pi \in \calM(\calU \times \calV)$ on the product space $\calU \times \calV$ 
is said to be a {\em coupling} of $\mu \in \calM(\calU)$ and $\nu \in \calM(\calV)$ if for every measurable $A \subset \calU$ and $B \subset \calV$, $\pi(A \times \calV) = \mu(A)$ and $\pi(\calU \times B) = \nu(B)$, that is, 
the $\calU$-marginal of $\pi$ is $\mu$ and the $\calV$-marginal of $\pi$ is $\nu$.  
Given a non-negative cost function $c: \calU \times \calV \rightarrow \bbR_+$, the optimal transport problem is to minimize the expectation of $c$ over the set of couplings $\Pi(\mu, \nu)$:
\begin{equation}\label{eq:ot_problem}
\otc(\mu, \nu) 
=
\inf\limits_{\pi \in \Pi(\mu, \nu)} \int c \, d\pi.
\end{equation}
Any optimal solution to \eqref{eq:ot_problem} is referred to as an optimal transport plan (or coupling), and the quantity $\otc(\mu, \nu)$ is referred to as the optimal transport cost of $\mu$ and $\nu$ with respect to $c$.
For an introduction to couplings and the optimal transport problem, we refer the reader to \cite{villani2008optimal}.
 
As noted in Section \ref{sec:introduction}, the optimal transport problem is general, and does not incorporate 
information about about the structure of the sets $\calU$ and $\calV$, or the measures $\mu$ and $\nu$.
Existing work has considered different choices of $\calU$ and $\calV$, including finite dimensional Euclidean spaces \citep{tolstikhin2018wasserstein, arjovsky2017wasserstein}, graphs \citep{xu2019scalable, xu2019gromov, titouan2019optimal}, trees \citep{wang2020optimal}, finite sets \citep{sommerfeld2016inference, montrucchio2019kantorovich}, and sequence spaces \citep{o2020optimal, kolesnikov2017optimal}.
In what follows, we consider the latter case in which $\mu$ and $\nu$ are shift-invariant probability measures on sequence spaces, equivalently
the distributions of stationary processes.

\subsection{Couplings of Stationary Processes}

We briefly cover some notation and background on stationary probability measures and processes.
Let $\X$ and $\Y$ be finite sets with their discrete topology, and let $\calU = \X^\bbN$ and $\calV = \Y^\bbN$ be 
associated sequence spaces.  
Elements $\bfx = (x_1, x_2, \ldots) \in \calU$ and $\bfy = (y_1, y_2, \ldots) \in \calV$ are infinite sequence with 
entries in $\X$ and $\Y$, respectively.
For a Borel measure $\mu \in \calM(\X^\bbN)$, let $\mu_k \in \calM(\X^k)$ denote the distribution of the first $k$ coordinates of $\bfx = (x_1, x_2, \ldots)$ under $\mu$, that is, $\mu_k(A) = \mu(A \times \X \times \X \times \cdots)$ for each $A \subseteq \X^k$.
Let $\sigma : \X^\bbN \rightarrow \X^\bbN$ be the left-shift map on $\X^\bbN$ defined by $\sigma(x_1, x_2, ...) = x_2, x_3, \ldots$;
note that $\sigma$ is continuous under the usual product topology on $\X^\bbN$.
A Borel measure $\mu \in \calM(\X^\bbN)$ is said to be {\em stationary} if $\mu \circ \sigma^{-1} = \mu$.
A stationary measure $\mu$ is said to be {\em ergodic} if $\mu(A) \in \{0, 1\}$ for any measurable set $A \subset \X^\bbN$ such that $\sigma^{-1}(A) = A$.
Let $\calM_s(\X^\bbN)$ denote the set of stationary Borel measures on $\X^\bbN$.  
In the same way we may define the left-shift $\tau: \Y^\bbN \rightarrow \Y^\bbN$ and the corresponding set of stationary measures $\calM_s(\Y^\bbN)$ on $\Y^\bbN$.
For the remainder of the paper, we fix the finite spaces $\X$ and $\Y$, 
and consider stationary ergodic measures $\mu \in \calM_s(\X^\bbN)$ and $\nu \in \calM_s(\Y^\bbN)$.

It is helpful to recall the simple equivalence between stationary measures and stationary processes.
The measure $\mu \in \calM_s(\X^\bbN)$ corresponds to a stationary process $X = X_1, X_2, \ldots$ with $X_i \in \X$ via the relation $\mathbb{P}( X_1^k \in A ) = \mu_k(A)$ for all $A \subseteq \X^k$ and all $k \geq 1$.
Ergodicity of $\mu$ is equivalent to ergodicity of the process $X$.  
In the same way, the measure $\nu \in \calM_s(\Y^\bbN)$ corresponds to a stationary process $Y = Y_1, Y_2, \ldots$ with values in $\Y$.  
Each coupling $\pi \in \Pi(\mu,\nu)$ corresponds to a joint process $(\tilde{X},\tilde{Y}) = (\tilde{X}_1,\tilde{Y}_1), (\tilde{X}_2,\tilde{Y}_2), \ldots$ such that $\tilde{X} \eqd X$ and $\tilde{Y} \eqd Y$.
With a slight abuse of notation, we will use $\Pi(X,Y)$ to refer to the set of couplings of stationary processes $X$ and $Y$. 
Importantly, the definition of coupling does {\em not} ensure that the joint process $(\tilde{X},\tilde{Y})$ is stationary, even when
$X$ and $Y$ are (see Examples \ref{ex:nonstationary_coupling} and \ref{ex:ot_is_bad} below).

In what follows we will consider a non-negative, single letter cost function $c: \X \times \Y \rightarrow \bbR_+$ that is defined on pairs of elements from $\X$ and $\Y$.
Note that $c$ is necessarily bounded as $\X$ and $\Y$ are finite. 
By considering sliding blocks, our results may be readily extended to cost functions depending on a finite number of letters.  
Single (and finite) letter cost functions are the norm in information theory, and are natural when making inferences about processes that are only partially observed.
Note also that any cost function $c' :  \X^\bbN \times \Y^\bbN \to \bbR_+$ that is continuous in the usual product topology
can be uniformly approximated by a finite letter cost function.

Any single letter cost function $c: \X \times \Y \rightarrow \bbR_+$ can be extended to a cost function
$c_0 :  \X^\bbN \times \Y^\bbN \to \bbR_+$ on infinite sequences by defining $c_0(\bfx, \bfy) = c(x_1,y_1)$.
In this case, the optimal transport problem for stationary measures $\mu \in \calM_s(\X^\bbN)$ and 
$\nu \in \calM_s(\Y^\bbN)$ can be written as
\begin{eqnarray}
\label{eqn:OTP}
\inf\limits_{\pi \in \Pi(\mu, \nu)} \int c_0 \, d\pi 
\ = \ 
\inf\limits_{\pi \in \Pi(\mu, \nu)} \int c \, d\pi_1
\ = \ 
\inf\limits_{(\tilde{X},\tilde{Y}) \in \Pi(X, Y)} \bbE c(\tilde{X}_1, \tilde{Y}_1)
\end{eqnarray}
where $X$ and $Y$ are the stationary processes associated with $\mu$ and $\nu$ respectively, and $\pi_1$
is the one-dimensional marginal distribution of $\pi$.
In this case we will, with a slight abuse of notation, write $\otc(\mu, \nu)$ instead of $\ot_{c_0}(\mu, \nu)$.

In order to motivate the consideration of stationary couplings, we present two elementary examples.
The first shows that couplings of stationary processes can have very different properties than 
the processes being coupled.

\vskip.1in

\begin{ex}[\cite{o2020optimal}]
\label{ex:nonstationary_coupling}
Let $X$ and $Y$ be Bernoulli$(\nicefrac{1}{2})$ processes, which need not be defined on the 
same probability space.  Let $Z = Z_1, Z_2, \ldots$ be iid Bernoulli$(\nicefrac{1}{2})$ random variables
defined on a common probability space.  Define new processes $\tilde{X} = \tilde{X}_1, \tilde{X}_2, \ldots$ and 
$\tilde{Y} = \tilde{Y}_1, \tilde{Y}_2, \ldots$ in terms of $Z$ as follows: 
\[
\tilde{X}_i = Z_{2i}
\ \ \mbox{ and } \ \ 
\tilde{Y}_i
\ = \
\left\{
	\begin{array}{ll}
		Z_{2i + 1}  & \mbox{if } i \neq 2^k \\[.1in]
		 Z_{2i} & \mbox{if } \exists k, \, i = 2^k.
	\end{array}
\right.
\]
Clearly $\tilde{X} \eqd X$ and $\tilde{Y} \eqd Y$, and therefore the joint process 
$(\tilde{X},\tilde{Y}) = (\tilde{X}_1,\tilde{Y}_1), (\tilde{X}_2,\tilde{Y}_2), \ldots$ is a coupling of $X$ and $Y$.
However, it is easy to see that joint process $(\tilde{X},\tilde{Y})$ is not stationary and that it exhibits long-range dependence.
\end{ex}

\vskip.1in

The next example shows that the optimal transport distance between two stationary processes can be zero even
when the processes are distinct.

\vskip.1in

\begin{ex}
\label{ex:ot_is_bad}
Suppose that $\X = \Y$ and that $c(x,y) = \mathbbm{1}(x \neq y)$ is the 0-1 cost function.  Let $X$ and $Y$ be
two stationary processes that are distinct but share the same one-dimensional distribution.  (For example,
$X$ might be a Bernoulli$(\nicefrac{1}{2})$ process and $Y$ a stationary Markov chain that
cycles between $0$ and $1$.)  
We may couple $X$ and $Y$ as follows.  First, let $\tilde{X}_1 = \tilde{Y_1}$.
For $k \geq 2$ generate $\tilde{X}_{k}$ according to the conditional distribution of $X_k$ given $X_1^{k-1}$
and independently generate $\tilde{Y}_{k}$ according to the conditional distribution of $Y_{k}$ given $Y_1^{k-1}$.
The resulting process $(\tilde{X}, \tilde{Y})$ is a non-stationary coupling of $X$ and $Y$.  Moreover
$\bbE c(\tilde{X}_1, \tilde{Y}_1) = 0$ by design, and therefore the optimal transport cost between $X$ and $Y$ is
zero in this case, despite the fact that $X$ and $Y$ are distinct processes.
\end{ex}

\subsection{Joinings of Stationary Processes}

Example \ref{ex:ot_is_bad} illustrates an important feature of the optimal transport problem for stationary processes: 
for single or finite letter cost functions, an optimal coupling $(\tilde{X}, \tilde{Y})$ of processes $X$ and $Y$ need only 
align the initial components of $\tilde{X}$ and $\tilde{Y}$. 
If, for example, $X$ and $Y$ represent discrete-time audio and video sequences, 
an optimal coupling will seek to align the initial sequence of audio and video, 
but will be insensitive to differences at subsequent time points.
While the use of infinite-letter cost functions can address some of these issues, infinite or long-range costs
can be problematic in practice.
More importantly, as the examples above illustrate, optimal couplings of stationary processes need not be stationary.
From a theoretical and practical perspective, it is natural to consider stationary couplings of stationary processes, 
which are also known as joinings.

\begin{defn}
A probability measure $\lambda$ is a \emph{joining} of $\mu \in \calM_s(\X^\bbN)$ and $\nu \in \calM_s(\Y^\bbN)$ 
if $\lambda$ is a coupling of $\mu$ and $\nu$ and is itself stationary, that is, $\lambda \in \calM_s(\X^\bbN \times \Y^\bbN)$.
The set of joinings of $\mu$ and $\nu$ will be denoted by $\J(\mu, \nu)$.
\end{defn}

Equivalently, a joining of two stationary processes $X$ and $Y$ is a coupling $(\tilde{X}, \tilde{Y})$ that is itself stationary.
This set of processes will be referred to as $\J(X, Y)$.
Note that $\J(\mu, \nu)$ is non-empty, as the independent coupling $\mu \otimes \nu$ is always stationary.
Joinings were introduced by Furstenberg \cite{furstenberg1967disjointness}, and have been studied extensively 
in the ergodic theory literature since that time; 
an overview and more details can be found in \citep{de2005introduction,glasner2003ergodic}.

Restricting the optimal transport problem (\ref{eqn:OTP}) (with a single letter cost) to the set of stationary couplings leads to
the \emph{optimal joining problem}: given measures $\mu \in \calM_s(\X^\bbN)$ and $\nu \in \calM_s(\Y^\bbN)$ find
\begin{equation}
\label{eq:oj_prob}
\ojc(\mu, \nu) 
\ = \
\inf\limits_{\lambda \in \J(\mu, \nu)} \int c \, d\lambda_1
\ = \
\inf\limits_{(\tilde{X},\tilde{Y}) \in \J(X, Y)} \bbE c(\tilde{X}_1, \tilde{Y}_1)
\end{equation}
where $X$ and $Y$ are the stationary processes associated with $\mu$ and $\nu$ respectively.
We will denote the set of joinings attaining the infimum in \eqref{eq:oj_prob} by $\Jmin(\mu, \nu)$.
Elements of $\Jmin(\mu, \nu)$ will be called optimal joinings, and $\ojc(\mu, \nu)$ will be called the optimal joining cost.
Note that when there is no risk of confusion, we will omit the cost $c$ in our notation for the optimal joining cost, writing $\oj(\mu, \nu)$ instead of $\ojc(\mu, \nu)$.
The following proposition collects some standard properties of $\J(\mu, \nu)$ and $\Jmin(\mu, \nu)$; for more details see
\cite{shields1996ergodic} or \cite{mcgoff2021empirical} and the references therein.  

\begin{prop}
\label{prop:properties_of_Jmin}
Under the stated assumptions, the set $\J(\mu, \nu)$ is non-empty, convex, and compact in the weak topology, and
its extreme points coincide with ergodic joinings of $\mu$ and $\nu$.  Moreover, the set
$\Jmin(\mu, \nu)$ of optimal joinings of $\mu$ and $\nu$ is non-empty, convex, and compact in the weak topology, and
its extreme points coincide with the set of ergodic optimal joinings.
\end{prop}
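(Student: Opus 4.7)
The plan is to establish the four properties for $\J(\mu,\nu)$ directly and then transfer them to $\Jmin(\mu,\nu)$. Non-emptiness of $\J(\mu,\nu)$ is immediate from the product coupling $\mu \otimes \nu$, which is stationary whenever $\mu$ and $\nu$ are. Convexity is routine since shift-invariance and the marginal constraints are linear conditions on $\lambda$. For weak compactness, $\calM(\X^{\bbN} \times \Y^{\bbN})$ is itself weakly compact because the ambient space is compact and metrizable, and each defining constraint can be written as an equality of integrals against bounded continuous functions (shift invariance via $\varphi \circ (\sigma \times \tau) - \varphi$ for $\varphi \in C(\X^\bbN \times \Y^\bbN)$; marginal constraints similarly), so $\J(\mu,\nu)$ is a weakly closed subset of a compact set and hence compact.

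For the extreme point characterization of $\J(\mu,\nu)$, one direction is a standard Radon--Nikodym argument: if $\lambda$ is ergodic under $\sigma \times \tau$ and $\lambda = t\lambda_1 + (1-t)\lambda_2$ with $\lambda_1,\lambda_2 \in \J(\mu,\nu)$ and $t \in (0,1)$, then $\lambda_1 \ll \lambda$ with Radon--Nikodym derivative that is $(\sigma\times\tau)$-invariant up to $\lambda$-null sets; ergodicity forces it to be $\lambda$-a.s.\ constant, so $\lambda_1 = \lambda$. The converse is where I expect the main obstacle: suppose $\lambda$ is not ergodic and write its ergodic decomposition $\lambda = \int \lambda_\omega \, d\pi(\omega)$. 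The delicate point is to show each $\lambda_\omega$ is itself in $\J(\mu,\nu)$. Let $\mu_\omega$ and $\nu_\omega$ denote its two marginals; they are stationary and in fact ergodic (since any $\sigma$-invariant set in $\X^{\bbN}$ lifts to a $(\sigma\times\tau)$-invariant set in the product). Integrating over $\omega$ yields $\mu = \int \mu_\omega \, d\pi(\omega)$, which is an ergodic decomposition of $\mu$; since $\mu$ is itself ergodic by assumption, uniqueness of the ergodic decomposition forces $\mu_\omega = \mu$ for $\pi$-a.e.\ $\omega$, and analogously $\nu_\omega = \nu$. Hence $\pi$-a.e.\ ergodic component is a joining of $\mu$ and $\nu$; since $\pi$ is not a point mass, splitting it into two nontrivial pieces exhibits $\lambda$ as a nontrivial convex combination of distinct joinings, so $\lambda$ is not extreme.

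For $\Jmin(\mu,\nu)$, the functional $\lambda \mapsto \int c \, d\lambda_1$ is weakly continuous on $\J(\mu,\nu)$ (the integrand $(\bfx,\bfy) \mapsto c(x_1,y_1)$ is a bounded continuous function on the product sequence space), so it attains its minimum on the compact set $\J(\mu,\nu)$, giving non-emptiness. Convexity follows from linearity of the cost functional, and $\Jmin(\mu,\nu)$ is weakly closed as the preimage of a single value under this continuous map, hence compact. The extreme point characterization transfers via the same ergodic decomposition: if $\lambda \in \Jmin(\mu,\nu)$ is not ergodic, each ergodic component $\lambda_\omega$ is a joining with $\int c \, d(\lambda_\omega)_1 \geq \ojc(\mu,\nu)$, but the average equals $\ojc(\mu,\nu)$, forcing $\pi$-a.e.\ component to lie in $\Jmin(\mu,\nu)$, so the splitting argument again yields a nontrivial decomposition; conversely, any ergodic optimal joining is already extreme in the larger set $\J(\mu,\nu)$ and hence extreme in $\Jmin(\mu,\nu)$ as well.
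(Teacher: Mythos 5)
Your proof is correct. Note, however, that the paper does not actually prove Proposition \ref{prop:properties_of_Jmin}: it records these facts as standard and defers to \cite{shields1996ergodic} and \cite{mcgoff2021empirical}, so there is no in-paper argument to compare against. What you have written is a correct, self-contained version of the standard argument from those sources: non-emptiness via $\mu\otimes\nu$; compactness of $\J(\mu,\nu)$ as a weakly closed subset of the compact set $\calM(\X^\bbN\times\Y^\bbN)$ cut out by continuous linear constraints; the Radon--Nikodym argument for ``ergodic $\Rightarrow$ extreme''; and the ergodic decomposition plus ergodicity of $\mu$ and $\nu$ (hence their extremality in $\calM_s$) to show each ergodic component of a joining is again a joining, giving the converse. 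The transfer to $\Jmin(\mu,\nu)$ via weak continuity of $\lambda\mapsto\int c\,d\lambda_1$ and the averaging argument forcing a.e.\ ergodic component of an optimal joining to be optimal is also right. Two small points you gloss over but which are routine: (i) the invariance of $d\lambda_1/d\lambda$ when both measures are $(\sigma\times\tau)$-invariant, and (ii) the existence of a measurable set of components with $0<\pi(E)<1$ on which the averaged joinings actually differ (take a continuous $f$ whose integral against $\lambda_\omega$ is non-constant and threshold it). Neither affects correctness.
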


It turns out that there are close connections between the optimal joining problem and the optimal
transport problem using long run average cost.
For $k \geq 1$ let $c_k: \X^k \times \Y^k \rightarrow \bbR_+$ be the $k$-step cumulative cost 
defined by $c_k(x_1^k, y_1^k) = \sum_{\ell = 1}^k c(x_\ell, y_\ell)$, and 
let $\overline{c}(\bfx, \bfy) = \limsup_{k\rightarrow\infty} k^{-1} c_k(x_1^k, y_1^k)$.

\begin{restatable}{prop}{ojisotcbar}
\label{prop:oj_is_otcbar}
If $\X$ and $\Y$ are finite and $\mu \in \calM_s(\X^\bbN)$ and $\nu \in \calM_s(\Y^\bbN)$ are ergodic then
\begin{equation*}
\oj(\mu, \nu) 
=
\lim_{k\rightarrow\infty} \frac{1}{k} \otck(\mu_k, \nu_k)
=
\otcbar(\mu, \nu).
\end{equation*}
\end{restatable}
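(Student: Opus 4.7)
I will establish the chain $\oj(\mu,\nu) = \lim_k \tfrac{1}{k}\otck(\mu_k,\nu_k) = \otcbar(\mu,\nu)$ in two parts: first the equality with the limit, via Ces\`aro averaging of optimal block couplings combined with a weak-compactness argument; then the equality with $\otcbar$, via the ergodic theorem together with a reverse-Fatou bound.

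\textbf{First equality ($\oj = \lim_k \tfrac{1}{k}\otck$).} The easy direction is immediate: for any joining $\lambda \in \J(\mu,\nu)$, stationarity yields $\int c_k\, d\lambda_k = k\int c\, d\lambda_1$, and since $\lambda_k \in \Pi(\mu_k,\nu_k)$ we get $\tfrac{1}{k}\otck(\mu_k,\nu_k) \leq \int c\, d\lambda_1$; taking the infimum over $\lambda$ gives $\tfrac{1}{k}\otck(\mu_k,\nu_k) \leq \oj(\mu,\nu)$ for every $k$. For the reverse direction, fix for each $k$ an optimal $\pi^{(k)} \in \Pi(\mu_k,\nu_k)$ and extend it to a (non-stationary) coupling $\hat\pi^{(k)}$ of $\mu$ and $\nu$ on $\X^{\bbN}\times\Y^{\bbN}$ by sampling the tails conditionally on $(X_1^k,Y_1^k)$ from $\mu(\cdot \mid X_1^k) \otimes \nu(\cdot \mid Y_1^k)$. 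Form the partial Ces\`aro average
\[
\bar\pi^{(k)} \;=\; \frac{1}{k}\sum_{j=0}^{k-1} \hat\pi^{(k)} \circ (\sigma\times\tau)^{-j},
\]
which is still a coupling of $\mu,\nu$ (since $\mu,\nu$ are shift-invariant), satisfies the identity
\[
\int c\, d\bar\pi^{(k)}_1 \;=\; \frac{1}{k}\int c_k\, d\pi^{(k)} \;=\; \frac{1}{k}\otck(\mu_k,\nu_k),
\]
and is $(\sigma\times\tau)$-invariant up to $2/k$ in total variation (a one-line telescoping computation gives $\bar\pi^{(k)} \circ (\sigma\times\tau)^{-1} - \bar\pi^{(k)} = \tfrac{1}{k}(\hat\pi^{(k)} \circ (\sigma\times\tau)^{-k} - \hat\pi^{(k)})$). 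Choose a subsequence $k_j$ along which $\tfrac{1}{k_j}\otc_{k_j}(\mu_{k_j}, \nu_{k_j})$ converges to $\liminf_k \tfrac{1}{k}\otck$, and pass to a further subsequence along which $\bar\pi^{(k_j)}$ converges weakly (available since $(\X\times\Y)^{\bbN}$ is compact). The weak limit $\lambda^*$ inherits the marginals $\mu,\nu$, becomes genuinely $(\sigma\times\tau)$-invariant (the $O(1/k)$ defect vanishes against bounded continuous test functions), and has $\int c\, d\lambda^*_1 = \liminf_k \tfrac{1}{k}\otck$ by weak continuity of $\pi \mapsto \int c\, d\pi_1$. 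Hence $\lambda^* \in \J(\mu,\nu)$ witnesses $\oj \leq \liminf_k \tfrac{1}{k}\otck$, and the limit exists and equals $\oj$. I expect this stationarization step to be the main obstacle: $\hat\pi^{(k)}$ is not stationary, so the Ces\`aro average is only approximately invariant, and some care is required to pass approximate invariance to the weak limit while simultaneously preserving both marginals and the cost at time $1$.

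\textbf{Second equality ($\oj = \otcbar$).} For $\otcbar \leq \oj$, take any joining $\lambda$ and consider its ergodic decomposition $\lambda = \int \lambda^\omega\, dQ(\omega)$. Since $\mu,\nu$ are ergodic, uniqueness of the ergodic decomposition applied to the $\X^\bbN$- and $\Y^\bbN$-marginals forces each $\lambda^\omega$ to still have marginals $\mu$ and $\nu$, so each $\lambda^\omega$ is itself an ergodic joining. Birkhoff's theorem applied to the continuous bounded function $(\bfx,\bfy) \mapsto c(x_1,y_1)$ under the ergodic $\lambda^\omega$ gives $\overline{c} = \int c\, d\lambda^\omega_1$ $\lambda^\omega$-a.s., and integrating over $\omega$ yields $\int \overline{c}\, d\lambda = \int c\, d\lambda_1$; hence $\otcbar \leq \inf_\J \int \overline{c}\, d\lambda = \oj$. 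For the reverse inequality, start from $\otck(\mu_k,\nu_k) \leq \int c_k\, d\pi_k$ for any $\pi \in \Pi(\mu,\nu)$, divide by $k$, and apply reverse Fatou (valid since $c_k/k$ is uniformly bounded by $\|c\|_\infty$) to get $\lim_k \tfrac{1}{k}\otck \leq \int \overline{c}\, d\pi$; taking the infimum over $\pi$ and invoking the first equality yields $\oj \leq \otcbar$, closing the chain.
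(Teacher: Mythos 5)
Your proof is correct, but it takes a genuinely different route from the paper's on both halves of the statement. For the first equality the paper gives no proof at all: it cites Gray--Neuhoff--Shields for the case where $\X=\Y$ and $c$ is a metric and asserts the general case is a straightforward extension. Your Ces\`aro-averaging argument (extend the optimal $k$-block coupling with conditionally independent tails, average over the first $k$ shifts, and check that the $O(1/k)$ invariance defect, the marginals, and the time-one cost all behave correctly under the weak limit) is essentially that extension written out, and it is sound; you correctly identify and handle the only delicate point, namely passing approximate shift-invariance to the limit. For the second equality the paper argues quite differently: it proves that ergodic optimal joinings are exactly the $\overline{c}$-cyclically monotone ergodic joinings (Lemma~\ref{lemma:cycmon_iff_optimal}), invokes the existence of an ergodic element of $\Jmin(\mu,\nu)$, and then applies Theorem~\ref{thm:cycmon_implies_optimal} to conclude that this joining also solves the unconstrained problem $\otcbar(\mu,\nu)$. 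Your route is more elementary: $\otcbar\le\oj$ follows from Birkhoff's theorem as in the paper (your detour through the ergodic decomposition is more than is needed --- $\int \overline{c}\,d\lambda=\int c\,d\lambda_1$ already holds for any stationary $\lambda$ by conditioning on the invariant $\sigma$-field, without using ergodicity of $\mu$ and $\nu$), while the reverse inequality $\oj\le\otcbar$ comes from the chain $\tfrac1k\otck(\mu_k,\nu_k)\le\tfrac1k\int c_k\,d\pi$ for an arbitrary coupling $\pi\in\Pi(\mu,\nu)$, reverse Fatou applied to the uniformly bounded functions $\tfrac1k c_k$, and the already-established first equality. This avoids cyclical monotonicity and the external optimality theorem entirely and makes the whole proposition self-contained; what it gives up is the structural by-product of the paper's approach, namely the characterization of ergodic optimal joinings as the $\overline{c}$-cyclically monotone ones, which the paper records as a result of independent interest.
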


The first equality in Proposition \ref{prop:oj_is_otcbar} was proven in \cite{gray1975generalization} in the special case 
where $\X = \Y$ and $c$ is a metric; a straightforward extension of their arguments establishes the general case above.
The second equality is proven in Section \ref{sec:proofs} using cyclical monotonicity of optimal couplings.
We remark that the arguments in the proof of Proposition \ref{prop:oj_is_otcbar} do not rely on the finiteness of $\X$ and $\Y$ and may be adapted to the case of Polish spaces and continuous and bounded cost.

Proposition \ref{prop:oj_is_otcbar} shows that the optimal joining cost may be obtained as a limit of $k$-step optimal
transport costs, and that this limit is equal to the optimal transport cost under the long term average cost function.
In this sense, the optimal joining problem seeks couplings that have good long-run average behavior, relative to the single letter cost, 
over the complete history of the joint process.
This is a natural objective when considering optimal transport for stationary processes.
Whereas the limiting average cost $\overline{c}$ may be highly irregular as a function,
the set of joinings $\J(\mu, \nu)$ has a relatively simple structure (e.g., compactness and convexity) 
and leads to an optimization problem that is easier to study.

\begin{rem}
Proposition \ref{prop:oj_is_otcbar} implies that the optimal joining problem $\oj(\mu, \nu)$ satisfies Kantorovich duality with respect to $\overline{c}$.
In particular, 
\begin{equation}\label{eq:dual_oj}
\oj(\mu, \nu) = \sup\limits_{f,g} \left\{ \int f \, d\mu + \int g \, d\nu:  f \oplus g \leq \overline{c}\right\},
\end{equation}
where the supremum is taken over $\mu$-integrable $f: \X^\bbN \rightarrow \bbR$ and $\nu$-integrable $g: \Y^\bbN \rightarrow \bbR$.
The only difference between \eqref{eq:dual_oj} and the standard 
Kantorovich dual problem is the appearance of $\overline{c}$ instead of $c$.
More details on Kantorovich duality can be found in \cite{villani2008optimal} and \cite{santambrogio2015optimal}.
While we do not make use of the dual optimal joining problem in this paper, we expect that it may be of use in future analyses.
\end{rem}

\paragraph{Related Work.}
A special case of the optimal joining cost first appeared in the work of Ornstein under the name $\overline{d}$-distance \citep{ornstein1973application}.
For finite alphabets, the $\overline{d}$-distance is equivalent to the optimal joining cost with respect to the discrete metric.
Subsequently, \cite{gray1975generalization} generalized the $\overline{d}$-distance to Polish alphabets and metric costs.
More recently, \cite{ruschendorf2012optimal} considered the same problem for a specific class of examples in which the optimal joining could be determined exactly and extended the problem to random fields.
Subsequent work has considered various aspects of stationary optimal transport in the context of dynamical systems such as duality \citep{zaev2015monge}, decompositions \citep{zaev2016ergodic}, and the existence of optimal transport maps \citep{kolesnikov2017optimal}.
Another line of work \citep{jenkinson2006ergodic,jenkinson2019ergodic}, referred to as ergodic optimization, studies optimization of linear functionals over the set of invariant measures.
One may regard the optimal joining problem as a constrained ergodic optimization problem.
Recent work in statistical inference \citep{mcgoff2020empirical,mcgoff2021empirical,mcgoff2019gibbs} has shown that stationary optimal transport problems arise naturally in the context of fitting dynamical models.
In another direction, \cite{o2020optimal} studied computational aspects of a constrained form of the optimal joining problem for Markov chains.
This constrained optimal joining problem was applied to the comparison and alignment of graphs in \cite{oconnor2021graph}.

\section{Estimation of an Optimal Joining and its Expected Cost}\label{sec:estimate_oj}

In the rest of the paper, we consider the optimal joining problem in a statistical setting, focusing on the problem of estimating an optimal joining and the optimal joining cost of two stationary, ergodic processes from finite sequences of observations.
Fix finite sets $\X$ and $\Y$, a single-letter cost $c: \X \times \Y \rightarrow \bbR_+$, and stationary, ergodic process measures $\mu \in \calM_s(\X^\bbN)$ and $\nu \in \calM_s(\Y^\bbN)$ as in the previous section.
Let $X = X_1, X_2, ...$ and $Y = Y_1, Y_2, ...$ be two processes, possibly defined on different probability spaces, with distributions $\mu$ and $\nu$, respectively, and suppose that we observe $X_1, ..., X_n$ and $Y_1, ..., Y_n$.
The task of interest is to estimate an optimal joining $\lambda \in \Jmin(\mu, \nu)$ and the optimal joining cost $\oj(\mu, \nu)$ from the observed sequences.
In this section, we define the proposed estimates and state our main result regarding their consistency in the limit $n \rightarrow \infty$.

Before describing our proposed estimation scheme in detail, we first provide some intuition.
Proposition \ref{prop:oj_is_otcbar} ensures that the optimal transport cost between the $k$-dimensional 
distributions of $\mu$ and $\nu$ converges to the optimal joining cost as $k$ tends to infinity.
Thus when $k$ is large we expect that a good estimate of $k^{-1} \otck(\mu_k, \nu_k)$ will approximate the optimal joining cost $\oj(\mu, \nu)$.
Furthermore, a stationary coupling achieving the $k$-step optimal joining cost should be 
close to the set of optimal joinings $\Jmin(\mu, \nu)$.  In the setting of interest to us, we do not have access to
the finite dimensional distributions of the observed processes; instead we estimate these from the available observations.
In what follows, let $k$ be a fixed integer between $1$ and $n$.  
The choice of $k$ is discussed below in Section \ref{sec:choiceofk}.

We propose an estimation scheme that is comprised of three steps.
First, we construct $k$-block empirical measures from the available observations of each process.
The resulting probability measures act as empirical estimates of $\mu_k$ and $\nu_k$.
Second, we select an optimal transport plan between these empirical $k$-block measures with respect to $c_k$.
The expected cost of this coupling acts as an estimate of $\otck(\mu_k, \nu_k)$.
Finally, we construct a stationary process measure from the coupling in the second step.
This is done via a $k$-block process construction, described formally below in Definition \ref{defn:block_process}.

\begin{enumerate}[start=1, label={\bfseries Step \arabic*:}, leftmargin=2cm]
\item (Empirical $k$-block measure)
Define the probability measure $\hat{\mu}_{k,n} := \hat{\mu}_k[X_1^n] \in \calM(\X^k)$ by
\begin{equation*}
\hat{\mu}_{k,n}(x_1^k) = \frac{1}{n-k+1} \sum\limits_{\ell=0}^{n-k} \1(x_1^k = X_{\ell+1}^{\ell+k}),
\end{equation*}
\noindent 
for $x_1^k \in \mathcal{X}^k$.
This is referred to as the \emph{$k$-block empirical measure} constructed from observations $X_1^n$.
Let $\hat{\nu}_{k,n}$ be the $k$-block empirical measure constructed from $Y_1^n$ in the same manner.
Note that $\hat{\mu}_{k,n}$ is a probability measure on $\X^k$, and is an estimate of $\mu_k$, the $k$-dimensional
distribution of $\mu$;
analogous remarks apply to $\hat{\nu}_{k,n}$.

\item (Optimal coupling of $k$-block measures)
Find an optimal coupling of $\hat{\mu}_{k,n}$ and $\hat{\nu}_{k,n}$ with respect to $c_k$.
Formally, let $\pi_k \in \Pi(\hat{\mu}_{k,n}, \hat{\nu}_{k,n})$ be any coupling such that
$\int c_k \, d\pi_k \leq \int c_k \, d\pi'_k$ for all $\pi'_k \in \Pi(\hat{\mu}_{k,n}, \hat{\nu}_{k,n})$.
Thus $\int c_k \, d\pi_k = \otck(\hat{\mu}_{k,n}, \hat{\nu}_{k,n})$.
To simplify notation in what follows, define
\begin{equation}
\label{eq:est_oj_cost}
\ojhatkn \ = \ \ojhatk(X_1^n, Y_1^n) \ = \ k^{-1}\otck(\hat{\mu}_{k,n}, \hat{\nu}_{k,n}).
\end{equation}
\end{enumerate}

We regard the number $\ojhatkn$ defined in \eqref{eq:est_oj_cost} as an estimate of the optimal joining cost of the observed processes $X$ and $Y$.
We estimate an optimal joining of these processes by constructing a stationary process measure from the optimal coupling $\pi_k$ appearing in Step 2 above.
In order to do this, we leverage a well-known construction described in Definition \ref{defn:block_process} below.

\begin{defn}[Block process construction]
\label{defn:block_process}
Let $\calU$ be finite and $k \geq 1$.
Define $\tilde{\Lambda}^k: \calM(\calU^k) \rightarrow \calM(\calU^\bbN)$ 
to be the map that takes a probability measure $\gamma \in \calM(\calU^k)$ to the unique probability measure on $\calU^\bbN$ obtained by independently concatenating $\gamma$ with itself infinitely many times.
Formally, for any $\ell k$-dimensional cylinder set $C = C_1 \times \cdots \times C_{\ell k} \times \calU \times \cdots \subset \calU^\bbN$,
\begin{align*}
\tilde{\Lambda}^k[\gamma](C) &= \gamma(C_1 \timesdots C_k) \gamma(C_{k+1} \timesdots C_{2k}) \cdots \gamma(C_{(\ell-1)k+1} \timesdots C_{\ell k}) \\
&= \prod\limits_{i=0}^{\ell-1} \gamma(C_{ik+1}^{ik+k}).
\end{align*}
\noindent Moreover, define $\Lambda^k : \calM(\calU^k) \rightarrow \calM_s(\calU^\bbN)$ to be the map defined by randomizing the start of the output of $\tilde{\Lambda}^k$ over the first $k$ coordinates.
Formally, for any set $A \subset \calU^\bbN$,
\begin{equation*}
\Lambda^k[\gamma](A) = \frac{1}{k} \sum\limits_{\ell=0}^{k-1} \tilde{\Lambda}^k[\gamma](\calU^\ell \times A).
\end{equation*}
\noindent We will refer to $\tilde{\Lambda}^k[\gamma]$ as the \emph{independent $k$-block process induced by $\gamma$} 
and $\Lambda^k[\gamma]$ as the \emph{stationary $k$-block process induced by $\gamma$}.
\end{defn}


The block process construction described above is standard in ergodic theory \cite{ornstein1990sampling, shields1996ergodic}, and ensures that the process $\Lambda^k[\gamma]$ is stationary.

\begin{enumerate}[start=3, label={\bfseries Step \arabic*:}, leftmargin=2cm]
\item (Construct stationary process)
Given the $k$-dimensional measure $\pi_k$ obtained in Step 2, define the stationary joint measure 
$
\estojkn \ = \ \Lambda^k[\pi_k] \ \in \ \calM_s(\X^\bbN \times \Y^\bbN).
$
\end{enumerate}

We regard $\estojkn$ as an estimate of an optimal joining of the observed processes.
We establish in Appendix \ref{app:estimate_properties} that $\estojkn$ is a joining of empirical estimates $\Lambda^k[\hat{\mu}_{k,n}]$ and $\Lambda^k[\hat{\nu}_{k,n}]$ of $\mu$ and $\nu$ with expected cost equal to $\ojhatkn$.

\subsection{Consistency}
Having detailed the proposed estimators $\estojkn$ and $\ojhatkn$, we now consider their behavior as the length $n$ of the observed sequences goes to infinity.
Intuitively, for fixed $k$ we expect that when $n$ is large, $\ojhatkn$ will be close to $k^{-1} \otck(\mu_k, \nu_k)$.
However, as Proposition \ref{prop:oj_is_otcbar} suggests, this quantity will only be close to the optimal joining cost when $k$ is large.
Thus in order for our estimates to converge to the desired targets, we must let $k$ grow with $n$.
In particular, we consider sequences of estimates $\{\estojknn\}_{n\geq 1}$ and $\{\ojhatknn\}_{n \geq 1}$ 
for some sequence $k(n)$ such that $k(n) \rightarrow \infty$ and ask whether the two sequences converge to an optimal joining and the optimal joining cost, respectively.
We show in Theorem \ref{thm:consistent_estimation_of_oj} that under the stated assumptions, such a sequence $k(n)$ necessarily exists.
We will say that a sequence of Borel probability measures $\gamma^1, \gamma^2, \ldots \in \calM(\calU)$ 
converges weakly to a set $\Gamma \subset \calM(\calU)$, written as $\gamma^n \Rightarrow \Gamma$, if every subsequence of $\gamma^n$ contains a further subsequence that converges weakly to an element of $\Gamma$.
Moreover, to simplify notation going forward, we will occasionally write $k$ for $k(n)$ when there is no risk of confusion.

\begin{restatable}[]{thm}{consistentestimationofoj}
\label{thm:consistent_estimation_of_oj}
Let $\X$ and $\Y$ be finite and $\mu \in \calM_s(\X^\bbN)$ and $\nu \in \calM_s(\Y^\bbN)$ be ergodic.
Then there exists a sequence $k = k(n)$ with $k(n)\rightarrow \infty$ such that 
$\ojhatkn \rightarrow \oj(\mu, \nu)$ and $\estojkn \Rightarrow \Jmin(\mu, \nu)$ almost surely as $n \rightarrow \infty$.
\end{restatable}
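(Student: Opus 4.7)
The plan is to combine an ``$n \to \infty$ with $k$ fixed'' analysis with the ``$k \to \infty$'' limit given by Proposition \ref{prop:oj_is_otcbar}, and to splice the two via a diagonal argument.

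\emph{Step 1 (fixed $k$).} Fix $k \geq 1$. By stationarity and ergodicity of $\mu$, the Birkhoff ergodic theorem applied to the $k$-block process yields $\hat{\mu}_{k,n} \to \mu_k$ almost surely in the total-variation topology on $\calM(\X^k)$; likewise $\hat{\nu}_{k,n} \to \nu_k$ a.s. Since $c_k$ is bounded on the finite alphabet $\X^k \times \Y^k$, the map $(\mu',\nu') \mapsto \otck(\mu',\nu')$ is continuous, and the argmin correspondence $\Pi_k^\star := \argmin_{\pi \in \Pi(\mu_k,\nu_k)} \int c_k \, d\pi$ is upper hemicontinuous and compact-valued in the marginals. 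Using continuity of $\Lambda^k$, we obtain almost surely, as $n \to \infty$,
\[
\ojhatkn \longrightarrow k^{-1}\,\otck(\mu_k,\nu_k)
\qquad\text{and}\qquad
\estojkn \Rightarrow \Lambda^k[\Pi_k^\star].
\]

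\emph{Step 2 (identify the limit set as $k \to \infty$).} The key claim is that every weak subsequential limit $\lambda^\star$, as $k \to \infty$, of measures $\lambda_k^\star \in \Lambda^k[\Pi_k^\star]$ lies in $\Jmin(\mu,\nu)$. Stationarity of $\lambda^\star$ is automatic since each $\lambda_k^\star$ is stationary by construction and $\calM_s(\X^\bbN \times \Y^\bbN)$ is weakly closed. The $\X^\bbN$-marginal of $\lambda_k^\star$ equals $\Lambda^k[\mu_k]$; for any fixed $m$, a direct computation with the definition of $\Lambda^k$ shows that the $m$-dimensional marginal of $\Lambda^k[\mu_k]$ agrees with $\mu_m$ except on a fraction $O(m/k)$ of ``boundary'' starting positions, so $\Lambda^k[\mu_k] \to \mu$ weakly, and similarly $\Lambda^k[\nu_k] \to \nu$. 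Hence $\lambda^\star \in \J(\mu,\nu)$. Finally, because $c$ is single-letter and the $1$-marginal of $\Lambda^k[\pi_k^\star]$ is the average $k^{-1} \sum_{\ell=1}^k \pi_k^\star \circ \mathrm{proj}_\ell^{-1}$, we get
\[
\int c\,d(\lambda_k^\star)_1 \;=\; k^{-1}\,\otck(\mu_k,\nu_k) \;\longrightarrow\; \oj(\mu,\nu)
\]
by Proposition \ref{prop:oj_is_otcbar}. Weak continuity of integration against the bounded $c$ then yields $\int c\,d\lambda^\star_1 = \oj(\mu,\nu)$, so $\lambda^\star \in \Jmin(\mu,\nu)$.

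\emph{Step 3 (diagonalization).} Let $d$ metrize weak convergence on $\calM(\X^\bbN \times \Y^\bbN)$. From Step 1 and the definition of almost-sure convergence, for each $k$ there is a deterministic $N_k$ such that
\[
\bbP\Bigl(\sup_{n \geq N_k} \max\bigl\{\,|\ojhatkn - k^{-1}\otck(\mu_k,\nu_k)|,\; d(\estojkn,\, \Lambda^k[\Pi_k^\star])\,\bigr\} > \tfrac{1}{k}\Bigr) \;<\; 2^{-k}.
\]
Set $k(n) := \max\{k : N_k \leq n\}$, a deterministic nondecreasing sequence with $k(n) \to \infty$. Borel--Cantelli gives that, almost surely, the event above fails for all but finitely many $k$; hence $|\ojhatknn - k(n)^{-1}\otck(\mu_{k(n)},\nu_{k(n)})| \to 0$ and $d(\estojknn,\Lambda^{k(n)}[\Pi_{k(n)}^\star]) \to 0$. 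Combined with Proposition \ref{prop:oj_is_otcbar} and Step 2, this delivers $\ojhatknn \to \oj(\mu,\nu)$ and $\estojknn \Rightarrow \Jmin(\mu,\nu)$ almost surely.

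\emph{Main obstacle.} The conceptual heart is Step 2: showing that accumulations of the block constructions $\Lambda^k[\pi_k^\star]$ as $k$ grows are genuine optimal joinings of $\mu$ and $\nu$. This rests on the block approximation $\Lambda^k[\mu_k] \to \mu$ (handled by the explicit $O(m/k)$ boundary bookkeeping) and on Proposition \ref{prop:oj_is_otcbar} to pin down the cost of the limit. The diagonalization in Step 3 is routine but must be arranged to bound $\sup_{n \geq N_k}$ rather than a single-$n$ probability, which is precisely what almost-sure (rather than merely in-probability) convergence affords at each fixed $k$.
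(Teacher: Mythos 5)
Your proof is correct, and its skeleton --- split $|\ojhatkn - \oj(\mu,\nu)|$ into an empirical error at fixed $k$ plus the deterministic approximation error handled by Proposition \ref{prop:oj_is_otcbar}, choose $k(n)$ by diagonalization, and identify the limiting joining through its cost --- matches the paper's. The differences lie in how the two ingredients are packaged. For the choice of $k(n)$, the paper does not run the Borel--Cantelli diagonalization itself: it invokes the existence of admissible sequences for ergodic processes from \cite{marton1994entropy}, takes the minimum of the two, and notes that admissibility implies $c$-admissibility (Definition \ref{def:c_admissibility}); your Step 3 reconstructs that existence argument explicitly, which is more self-contained but forgoes the reusable intermediate statement (Proposition \ref{prop:admissibility_consistency}) that \emph{any} $c$-admissible sequence works --- the statement that later lets the paper plug in explicit $\calO(\log n)$ sequences for Markov chains. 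For the empirical error, the paper controls $|\ojhatkn - k^{-1}\otck(\mu_k,\nu_k)|$ via the Lipschitz bound of Proposition \ref{prop:eot_is_lipschitz} in the adapted transport distance $k^{-1}\otcXk(\hat{\mu}_{k,n},\mu_k)$, whereas you use total-variation convergence from Birkhoff plus continuity of $\otck$; on a finite alphabet at fixed $k$ these are interchangeable. Finally, for the joining you route through upper hemicontinuity of the argmin correspondence (Berge) and the two-stage limit set $\Lambda^k[\Pi_k^\star]$, while the paper avoids Berge's theorem entirely: it extracts a weakly convergent subsequence of the estimates directly (Lemma \ref{lemma:weakly_convergent_subsequence}) and certifies optimality of the limit purely from the cost identity $\int c \, d(\estojkn)_1 = \ojhatkn$ (Proposition \ref{prop:entropic_est_is_joining}) together with the already-established convergence $\ojhatkn \to \oj(\mu,\nu)$. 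That shortcut is worth adopting, since it renders the lower hemicontinuity of $(\mu',\nu') \mapsto \Pi(\mu',\nu')$ --- which you assert without proof --- unnecessary.
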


The problem of estimating optimal joinings appears to have not been considered explicitly in the literature.
However, the special case of estimating the $\overline{d}$-distance between two ergodic processes from finite observations has been considered.
The focus of this line of work has been in finding universal estimation schemes, including choices of the sequence $k(n)$, such that the desired convergence holds uniformly over all pairs $\mu$ and $\nu$ from some set.
The estimate $\ojhatkn$ for the optimal joining cost that we propose is an extension of that proposed in \cite{ornstein1990sampling} for the $\overline{d}$-distance.
In the case that $\X = \Y$ and $c$ is the 0-1 cost, it was shown that $\ojhatkn$ with $k = \calO(\log n)$ converges to the optimal joining cost of $\mu$ and $\nu$ whenever $\mu$ is a stationary coding of an iid process.
Later work studied the limits of this estimation scheme \cite{marton1994entropy} and the properties of processes for which the scheme is consistent \cite{ornstein1994d}.
In this context, our consistency results allow for relatively weak assumptions on $\mu$ and $\nu$ (ergodicity) at the expense of loss of control over the sequence $k(n)$.

\begin{rem}
The arguments underlying the proof of Theorem \ref{thm:consistent_estimation_of_oj} may be adapted in a straightforward way to show that the proposed estimates are consistent more generally whenever $\X$ and $\Y$ are compact and $c$ is continuous.
\end{rem}

\subsection{Choice of $k(n)$} 
\label{sec:choiceofk}
Theorem \ref{thm:consistent_estimation_of_oj} raises the question of how the sequence $k(n)$ depends on 
the processes $X$ and $Y$.
In the proof, we find that the choice of the sequence $k(n)$ is related to a notion of admissibility, which we 
now describe.

\begin{defn}\label{defn:adapted_cost}
Let $\X$ and $\Y$ be finite and let $c: \X \times \Y \rightarrow \bbR_+$ be a cost function.
Then the $\X$-adapted cost $\cX: \X \times \X \rightarrow \bbR_+$ is defined by
\begin{equation*}
\cX(x, x') = \sup\limits_{y \in \Y} |c(x, y) - c(x', y)|,
\end{equation*}
with the $\Y$-adapted cost $\cY: \Y \times \Y \rightarrow \bbR_+$ defined in the analogous way.
\end{defn}

The adapted cost arises naturally when studying the Lipschitz properties of the optimal transport and optimal joining costs (see Lemmas \ref{lemma:eot_is_lipschitz} and \ref{lemma:oj_is_lipschitz}).
Define $\cXk: \X^k \times \X^k \rightarrow \bbR_+$ to be the sum of the 
$\X$-adapted cost over $k$ coordinates, and define $\cYk$ similarly.
Note that $(\X^k, \cXk)$ and $(\Y^k, \cYk)$ 
are well-defined pseudometric spaces for every $k \geq 1$.

\begin{defn}\label{def:c_admissibility}
Let $\X$ and $\Y$ be finite and let $c: \X \times \Y \rightarrow \bbR_+$ be a cost function.
We will say that a nondecreasing sequence $k = k(n)$ with $k \rightarrow \infty$ is $c$-\emph{admissible} for $\mu \in \calM_s(\X^\bbN)$ if
\begin{equation*}
\mu\left(\bfx \in \X^\bbN: \lim\limits_{n\rightarrow\infty}\frac{1}{k}\otcXk\left(\hat{\mu}_{k,n}, \mu_{k}\right) = 0 \right) = 1.
\end{equation*}
\noindent We define $c$-admissibility for $\nu \in \calM_s(\Y^\bbN)$ in the analogous way.
\end{defn}

The $c$-admissibility of a sequence $k = k(n)$ ensures that the average transport distance between the $k$-dimensional distributions of $\mu$ and their empirical counterparts tends to zero almost surely as the number of observations $n$ increases.
The property $c$-admissibility weakens the notions of admissibility (with respect to total variation distance) and $\overline{d}$-admissibility discussed in \cite{shields1996ergodic}.
We note that certain sequences growing like $\calO(\log n)$ are known to be admissible (and thus $c$-admissible) for aperiodic Markov chains \citep{marton1994entropy}.

\begin{prop}\label{prop:admissibility_consistency}
Under the hypotheses of Theorem \ref{thm:consistent_estimation_of_oj}, if a sequence $k = k(n)$ is $c$-admissible for both $\mu$ and $\nu$, then with $\mathbb{P}$-probability one, $\ojhatkn \rightarrow \oj(\mu, \nu)$ and $\estojkn \Rightarrow \Jmin(\mu, \nu)$ as $n \rightarrow \infty$.
\end{prop}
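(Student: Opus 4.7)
The plan is to prove the two conclusions separately, reducing each to a combination of Proposition \ref{prop:oj_is_otcbar}, the weak compactness described in Proposition \ref{prop:properties_of_Jmin}, and the hypothesis of $c$-admissibility.

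For convergence of the estimated cost, I would first invoke Proposition \ref{prop:oj_is_otcbar}, which gives $\oj(\mu,\nu) = \lim_{k\to\infty} k^{-1} \otck(\mu_k,\nu_k)$. It then suffices to show that $k^{-1}|\otck(\hat{\mu}_{k,n},\hat{\nu}_{k,n}) - \otck(\mu_k,\nu_k)| \to 0$ almost surely along the given sequence $k = k(n)$. Here the Lipschitz property of optimal transport with respect to the adapted cost (the referenced Lemma \ref{lemma:eot_is_lipschitz}) yields a bound of the shape
\begin{equation*}
|\otck(\hat{\mu}_{k,n},\hat{\nu}_{k,n}) - \otck(\mu_k,\nu_k)| \ \leq \ \otcXk(\hat{\mu}_{k,n},\mu_k) + \otcYk(\hat{\nu}_{k,n},\nu_k).
\end{equation*}
Dividing by $k$ and invoking $c$-admissibility of $k(n)$ for both $\mu$ and $\nu$ sends each summand to zero almost surely, so $\ojhatkn \to \oj(\mu,\nu)$ with $\mathbb{P}$-probability one.

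For the second conclusion, I would exploit weak compactness. The set $\J(\mu,\nu)$ is weakly compact (Proposition \ref{prop:properties_of_Jmin}), and moreover the space $\calM(\X^\bbN \times \Y^\bbN)$ is itself compact in the weak topology since $\X$ and $\Y$ are finite. Thus every subsequence of $\{\estojkn\}$ admits a further weakly convergent subsequence with some limit $\lambda^\ast$. Two things must be verified: (i) $\lambda^\ast \in \J(\mu,\nu)$, and (ii) $\int c\,d\lambda^\ast_1 = \oj(\mu,\nu)$. For (i), as noted in Appendix \ref{app:estimate_properties}, the marginals of $\estojkn$ are $\Lambda^{k}[\hat{\mu}_{k,n}]$ and $\Lambda^{k}[\hat{\nu}_{k,n}]$; I would then check that for any fixed $\ell$, the $\ell$-dimensional marginal of $\Lambda^{k(n)}[\hat{\mu}_{k(n),n}]$ agrees, up to an $O(\ell/k(n))$ boundary correction arising from blocks that straddle a concatenation boundary in the construction of Definition \ref{defn:block_process}, with the $\ell$-block empirical measure of $X_1^n$. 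The pointwise ergodic theorem then gives convergence of the latter to $\mu_\ell$ almost surely, and an analogous argument handles $\nu$; stationarity of $\lambda^\ast$ is automatic since each $\estojkn$ is stationary. For (ii), since $c$ is bounded and continuous on $\X \times \Y$, the map $\lambda \mapsto \int c \, d\lambda_1$ is weakly continuous, so
\begin{equation*}
\int c \, d\lambda^\ast_1 \ = \ \lim_{n} \int c \, d(\estojkn)_1 \ = \ \lim_n \ojhatkn \ = \ \oj(\mu,\nu),
\end{equation*}
placing $\lambda^\ast$ in $\Jmin(\mu,\nu)$.

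The main obstacle is the marginal-identification step above: one must carefully reconcile the block-structure of $\Lambda^k[\hat{\mu}_{k,n}]$ (which concatenates $k$-blocks independently, so that the marginal on coordinates $(j{+}1,\dots,j{+}\ell)$ only matches the empirical $\ell$-marginal when the window lies inside a single block) with the growth $k(n) \to \infty$. The bookkeeping amounts to showing that the fraction of block positions for which the window crosses a boundary is $O(\ell/k(n)) \to 0$, combined with the ergodic theorem applied at each fixed $\ell$; once in place, taking $\ell \to \infty$ through cylinder sets yields weak convergence of the marginals.
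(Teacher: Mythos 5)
Your proposal is correct and follows essentially the same route as the paper, whose proof of this statement is embedded in the proof of Theorem \ref{thm:consistent_estimation_of_oj}: the cost convergence via the triangle inequality, the Lipschitz bound of Proposition \ref{prop:eot_is_lipschitz} (with $\eta=0$), $c$-admissibility, and Proposition \ref{prop:oj_is_otcbar}; then weak compactness plus identification of any subsequential limit as an optimal joining using Proposition \ref{prop:entropic_est_is_joining} and continuity of $\lambda \mapsto \int c\,d\lambda_1$. The explicit marginal-identification bookkeeping you describe (the $O(\ell/k)$ boundary correction plus the ergodic theorem) is exactly the detail the paper delegates to Lemma \ref{lemma:weakly_convergent_subsequence}, so you are if anything slightly more careful than the published argument.
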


\begin{ex}
Let $\mu$ and $\nu$ be aperiodic Markov chains with entropy rates $h(\mu)$ and $h(\nu)$, respectively (defined in Section \ref{sec:entropic_oj}).
Then for any $\varepsilon > 0$, the sequence $k(n) =  (h(\mu) \vee h(\nu) + \varepsilon)^{-1} \log n$ is admissible (and thus $c$-admissible) for both $\mu$ and $\nu$ \citep{marton1994entropy}.
Thus by Proposition \ref{prop:admissibility_consistency}, $\ojhatkn \rightarrow \oj(\mu, \nu)$ and $\estojkn \Rightarrow \Jmin(\mu, \nu)$ as $n \rightarrow \infty$.
\end{ex}

\begin{rem}
Other stationary process constructions may be used in Step 3 of the proposed estimation scheme.
For example, one may construct a finite-order, stationary Markov process from $\pi_k$.
However, the expected cost of a stationary Markov chain constructed from $\pi_k$ will generally not be equal to $\ojhatkn$ and may require more care to control.
On the other hand, the approach detailed in Step 3 enables us to control the expected cost of the constructed process $\estojkn$, namely $\ojhatkn$, which we show converges to the optimal joining cost.
\end{rem}

\begin{rem}
The optimal transport distance between $k$-block measures was proposed as an extension of optimal transport to stationary time series in \cite{muskulus2011wasserstein}.
However, that work did not consider the relationship of this approach to the optimal joining problem or the consistency of the proposed distance.
\end{rem}

\section{Finite-Sample Error Bound}\label{sec:rates}

In this section, we provide an upper bound on the expected error of the proposed estimate of the optimal joining cost when the observed
processes satisfy suitable mixing conditions.
Our bounds are derived from bounds on the optimal transport cost between a measure and an estimate of that measure based on a finite number of samples detailed in \cite{boissard2014mean}.
A substantial body of work has considered this problem for iid processes from both asymptotic and finite-sample perspectives \citep{dudley1969speed, boissard2014mean, fournier2015rate, weed2019sharp, mena2019statistical, genevay2018sample, klatt2020empirical}.
Other work has focused on rates of convergence and central limit theorems for the 1-Wasserstein distance ($\ot_d$ for a metric $d$) when samples are drawn from a stationary process satisfying a certain mixing condition \citep{dede2009empirical, boissard2014mean, dedecker2017behavior, berthet2020central}.
In order to obtain an error bound for the estimated optimal joining cost, we consider the case that the marginal processes $\mu$ and $\nu$ are $\phi$-mixing.


\begin{defn}
Let $\calU$ be finite.
A process measure $\gamma \in \calM_s(\calU^\bbN)$ has $\phi$-mixing coefficients 
$\phi_\gamma: \bbN_0 \rightarrow \bbR_+$ where $\phi_\gamma(0) = 1$, and for any $g \geq 0$,
\begin{equation*}
\phi_\gamma(g+1) = \sup\left\{|\gamma(\calU^{g} \times B | A) - \gamma(B)|: A \subset \calU^\ell,  \ell \geq 1, B \subset \calU^\bbN\right\},
\end{equation*}
\noindent where $\gamma(\calU^{g} \times B | A) = \gamma(A \times \calU^{g} \times B)/\gamma(A)$.
The measure $\gamma$ is called $\phi$-\emph{mixing} if $\lim_{g\rightarrow\infty} \phi_\gamma(g) = 0$.
\end{defn}

\noindent The $\phi$-mixing condition is a standard strong mixing condition in the study of stochastic processes.
Existing work on estimation of optimal transport costs under dependence has generally focused on weaker mixing conditions such as $\alpha$-mixing.
We find $\phi$-mixing to be particularly suited to our arguments in proving an error bound for the estimated optimal joining cost.
For more details on $\phi$-mixing and its relationship to other strong mixing conditions, we refer the reader to \cite{bradley2005basic}.

For a pseudometric space $(\calU, d)$, let $\mathcal{N}(\calU, d, \varepsilon)$ denote the $\varepsilon$-covering number of $\calU$ with respect to the pseudometric $d$.
We now present our finite sample error bound.

\begin{restatable}[]{thm}{abstractrates}
\label{thm:abstract_rates}
Let $\mu$ and $\nu$ have $\phi$-mixing coefficients $\phi_\mu$ and $\phi_\nu$, respectively.
Then there exists a constant $C < \infty$ such that for every $n \geq 1$, $k \in \{1, ..., n\}$, $g \geq 0$ and $t \in (0, \frac{1}{4} \|c\|_\infty]$,
\begin{align*}
\mathbb{E}\left|\ojhatkn - \oj(\mu, \nu)\right| 
\, \leq \, 
\|c\|_\infty \left(\frac{k (\phi_\mu(g+1) + \phi_\nu(g+1))}{k + g} + \frac{3 g}{k}\right) + C \left(t + u_t(k, n) + v_t(k, n)\right), \\[.08in]
\end{align*}
where
\begin{equation*}
u_t(k, n) \, = \,
\left(\frac{1}{n^2}\sum\limits_{\ell=0}^n (n-\ell+1) \phi_\mu^{\nicefrac{1}{2}}(\ell) \right)^{\nicefrac{1}{2}} \int_t^{\frac{1}{4}\|c\|_\infty} \mathcal{N}\left(\X^k, \frac{1}{k} \cXk, \varepsilon\right)^{\nicefrac{1}{2}} \, d\varepsilon, \\[.08in]
\end{equation*}
and $v_t(k, n)$ is defined similarly in terms of $\phi_\nu$ and $\Y$.
\end{restatable}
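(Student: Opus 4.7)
The bound follows from a triangle-inequality decomposition
\[
\bigl|\ojhatkn - \oj(\mu,\nu)\bigr|
\;\leq\;
\underbrace{\bigl|k^{-1}\otck(\hat\mu_{k,n},\hat\nu_{k,n}) - k^{-1}\otck(\mu_k,\nu_k)\bigr|}_{\text{stochastic error}}
\,+\, \underbrace{\bigl|k^{-1}\otck(\mu_k,\nu_k) - \oj(\mu,\nu)\bigr|}_{\text{approximation bias}},
\]
and then bounding each piece in expectation. The approximation bias captures how well the $k$-step block problem approximates the limiting optimal joining cost, while the stochastic error captures how well the empirical $k$-block measures estimate the true ones.

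For the approximation bias, the inequality $k^{-1}\otck(\mu_k,\nu_k)\leq\oj(\mu,\nu)$ is immediate: the first $k$-coordinate marginal $\lambda^\ast_k$ of any $\lambda^\ast\in\Jmin(\mu,\nu)$ is a coupling of $\mu_k,\nu_k$ whose cost satisfies $\int c_k\,d\lambda^\ast_k = k\int c\,d\lambda^\ast_1 = k\,\oj(\mu,\nu)$ by stationarity. The reverse direction is where the $\phi$-mixing enters. I would start from an optimal coupling $\pi^\ast_k$ of $(\mu_k,\nu_k)$ under $c_k$, extend it via the gluing lemma to a coupling $\tilde\pi_{k+g}$ of $(\mu_{k+g},\nu_{k+g})$ in which the $g$ ``gap'' coordinates are coupled arbitrarily (contributing at most $g\|c\|_\infty$), and then use $\tilde\pi_{k+g}$ as the template for a stationary joining $\lambda$ of $\mu$ and $\nu$ built on blocks of length $k+g$. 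Concretely, the joining couples each consecutive $k$-block via the $\pi_k^\ast$-conditionals and uses $\phi_\mu(g{+}1),\phi_\nu(g{+}1)$ bounds to argue that adjacent blocks of $\mu$ (resp.\ $\nu$) across a gap of length $g$ behave as if independent except on an event of probability at most $\phi_\mu(g{+}1)+\phi_\nu(g{+}1)$; on the exceptional event the coupling may incur the trivial cost $\|c\|_\infty$ per coordinate. Averaging over the $k$ active coordinates and the $g$ gap coordinates in each $(k{+}g)$-window yields the term $\|c\|_\infty\bigl(\tfrac{k(\phi_\mu(g+1)+\phi_\nu(g+1))}{k+g} + \tfrac{3g}{k}\bigr)$, with the first summand tracking the ``bad-coupling'' cost at block boundaries and the second tracking the accumulated gap cost plus the discrepancy between $1/(k{+}g)$ and $1/k$ normalizations.

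For the stochastic error, I would invoke the Lipschitz-style bound for optimal transport with respect to the adapted cost (as in the Lipschitz lemma referenced in the paper),
\[
\bigl|\otck(\hat\mu_{k,n},\hat\nu_{k,n}) - \otck(\mu_k,\nu_k)\bigr|
\;\leq\;
\otcXk(\hat\mu_{k,n},\mu_k) + \otcYk(\hat\nu_{k,n},\nu_k).
\]
Dividing by $k$ and taking expectation reduces the task to controlling $\mathbb{E}\,\otcXk(\hat\mu_{k,n},\mu_k)$ (and its $\nu$-analogue) in the pseudometric space $(\X^k,\cXk)$. Applying the Dudley / chaining bound of \cite{boissard2014mean}, adapted to $\phi$-mixing processes via the standard $\phi$-mixing covariance inequality on block-indicator functions, produces an expression of the form $k\bigl(t + \sigma_n(\mu)\cdot M_t^{\X}\bigr)$, where $t\in(0,\tfrac14\|c\|_\infty]$ is a truncation parameter for the entropy integral, $M_t^{\X}=\int_t^{\|c\|_\infty/4}\mathcal{N}(\X^k,k^{-1}\cXk,\varepsilon)^{1/2}\,d\varepsilon$, and $\sigma_n(\mu)=\bigl(n^{-2}\sum_\ell(n-\ell+1)\phi_\mu^{1/2}(\ell)\bigr)^{1/2}$ is the $\phi$-mixing variance factor. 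After division by $k$ and the symmetric $\nu$-argument, this yields the $C(t+u_t(k,n)+v_t(k,n))$ contribution.

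The main obstacle is the bias bound: producing a genuine \emph{stationary} joining of $\mu$ and $\nu$ (as opposed to the block-stationarizations $\Lambda^{k+g}[\mu_{k+g}]$ and $\Lambda^{k+g}[\nu_{k+g}]$ that fall out naturally from Definition \ref{defn:block_process}) whose expected cost is controlled by $k^{-1}\otck(\mu_k,\nu_k)$ plus an explicit mixing-plus-gap remainder. Quantifying the discrepancy between the block-stationarized measures and the true $\mu,\nu$ via $\phi$-mixing, and carefully bookkeeping the cost contributions from ``good'' blocks, ``bad'' blocks, and the $g$-length gaps between blocks, is the delicate step and is where the explicit constants in the stated bound (in particular the $3g/k$ coefficient) are produced.
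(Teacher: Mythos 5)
Your decomposition into stochastic error and approximation bias, and your treatment of the stochastic term (the Lipschitz bound in the adapted cost followed by the Boissard--Le Gouic chaining bound applied to the sliding $k$-block process, using $\rho \leq 2\phi^{1/2}$), match the paper's proof essentially step for step. The gap is in the bias term. The construction you describe --- concatenating $k$-blocks coupled by $\pi_k^\ast$ and separated by $g$-gaps, then invoking $\phi$-mixing to argue that adjacent blocks of $\mu$ ``behave as if independent'' --- does not produce a joining of $\mu$ and $\nu$: the marginals of that concatenated, start-randomized process are the gapped block-stationarizations $\Lambda^k[\mu,\alpha]$ and $\Lambda^k[\nu,\beta]$, not $\mu$ and $\nu$, no matter how small the mixing coefficients are. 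So the inequality $\oj(\mu,\nu) \leq \int c\, d\lambda_1$ is simply not available for your $\lambda$, and the bias bound cannot be closed this way. You flag exactly this issue in your final paragraph and defer it as ``the delicate step,'' but that step is the substantive content of the theorem's first term.

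The paper closes it with two separate lemmas. First, a Lipschitz property of the optimal joining cost in Ornstein's $\overline{d}$-distance (Lemma \ref{lemma:oj_is_lipschitz}), obtained by applying the finite-dimensional Lipschitz bound at each $k$, dividing by $k$, and passing to the limit via Proposition \ref{prop:oj_is_otcbar}; this reduces the problem to bounding $\overline{d}(\mu, \Lambda^k[\mu,\alpha])$ and $\overline{d}(\nu, \Lambda^k[\nu,\beta])$. Second, Lemma \ref{lemma:d_bar_rates} bounds $\overline{d}(\gamma, \Lambda^k[\gamma,\alpha])$ by $\frac{g}{k+g} + \frac{k}{k+g}\phi_\gamma(g+1)$ via a telescoping chain of hybrid processes $\rho^0, \dots, \rho^L$ in which consecutive hybrids differ in a single $k$-block and are coupled through an optimal Hamming coupling of $\gamma_k$ with $\gamma_k(\cdot \mid \text{past})$; the total-variation characterization of the Hamming transport cost converts each step into $k\phi_\gamma(g+1)$. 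Your intuition about ``bad events of probability $\phi(g+1)$'' is the right moral reading of that coupling, but without the $\overline{d}$-Lipschitz reduction (or an equivalent composition-of-joinings argument) the $\phi$-mixing estimate has no genuine joining of $\mu$ and $\nu$ to attach to, and the claimed bound $\|c\|_\infty\bigl(\tfrac{k(\phi_\mu(g+1)+\phi_\nu(g+1))}{k+g} + \tfrac{3g}{k}\bigr)$ is not established.
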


Theorem \ref{thm:abstract_rates} gives a general upper bound on the expected error of the estimate of the optimal joining 
cost in terms of the $\phi$-mixing coefficients of $\mu$ and $\nu$ and the covering numbers of the product spaces 
$\X^k$ and $\Y^k$ with respect to $\cXk$ and $\cYk$.
When the cost $c$ is less variable the covering numbers under $\cX$ and $\cY$ will 
be smaller, and the upper bound of the theorem will be smaller as well.

\begin{restatable}[]{cor}{rates}
\label{cor:rates}
Let $\mu$ and $\nu$ have $\phi$-mixing coefficients $\phi_\mu$ and $\phi_\nu$, respectively, satisfying 
\begin{equation*}
\sum_{\ell=0}^n (n-\ell) \phi_\mu^{\nicefrac{1}{2}}(\ell) = \calO(n^p) \quad\quad\mbox{and}\quad\quad \sum_{\ell=0}^n (n- \ell) \phi_\nu^{\nicefrac{1}{2}}(\ell) = \calO(n^p)
\end{equation*}

\noindent for some $p \in [1, 2)$.
Then there exists a constant $C < \infty$ depending only on $\phi_\mu$ and $\phi_\nu$ such that for every $k \geq 1$, $g \geq 0$, and $n$ large enough,
\begin{equation}
\label{eq:error_bound}
\mathbb{E}\left|\ojhatkn - \oj(\mu, \nu)\right| \leq \|c\|_\infty\left(\frac{k (\phi_\mu(g+1) + \phi_\nu(g+1))}{k + g} + \frac{3 g}{k} + \frac{C (|\X|^{\nicefrac{k}{2}} + |\Y|^{\nicefrac{k}{2}})}{n^{1 - \nicefrac{p}{2}}}\right).
\end{equation}

\noindent In particular, if $k(n) < \frac{(2-p) \log n}{\log(|\X| \vee |\Y|) \vee 1}$ and $g(n) = o(k(n))$ with $k(n), g(n) \rightarrow \infty$, then the upper bound converges to zero as $n \rightarrow\infty$.
\end{restatable}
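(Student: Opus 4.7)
The plan is to apply Theorem \ref{thm:abstract_rates} directly and bound the terms $u_t(k,n)$ and $v_t(k,n)$ using the $\phi$-mixing summability hypothesis together with the trivial covering-number estimate
\[
\mathcal{N}\!\left(\X^k,\tfrac{1}{k}\cXk,\varepsilon\right) \leq |\X|^k,
\]
which holds because $\X^k$ is finite with cardinality $|\X|^k$. Then choose $t$ small enough to be absorbed.

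First, using $\sum_{\ell=0}^n (n-\ell)\phi_\mu^{1/2}(\ell) = \calO(n^p)$ (the $+1$ shift in the sum in Theorem \ref{thm:abstract_rates} is harmless), I obtain
\[
\left(\frac{1}{n^2} \sum_{\ell=0}^n (n-\ell+1)\phi_\mu^{1/2}(\ell)\right)^{\!\nicefrac{1}{2}} \leq C_1 \, n^{-1+p/2}
\]
for a constant $C_1 < \infty$ depending only on $\phi_\mu$. Second, since the covering number is bounded by $|\X|^k$ uniformly in $\varepsilon$, the entropy integral in $u_t(k,n)$ is dominated by $\tfrac{1}{4}\|c\|_\infty \cdot |\X|^{k/2}$. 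Combining the two yields
\[
u_t(k,n) \leq C_2 \, \|c\|_\infty \, |\X|^{k/2} \, n^{-1+p/2},
\]
and the analogous bound for $v_t(k,n)$ with $|\Y|^{k/2}$ and a constant depending on $\phi_\nu$. To finish the proof of \eqref{eq:error_bound}, I would set $t = n^{-(1-p/2)}$, which lies in $(0,\tfrac{1}{4}\|c\|_\infty]$ for all $n$ sufficiently large. With this choice the isolated $t$-contribution is of the same order as, and can be absorbed into, the $(|\X|^{k/2}+|\Y|^{k/2})/n^{1-p/2}$ term by enlarging the constant. Substitution back into Theorem \ref{thm:abstract_rates} produces \eqref{eq:error_bound}.

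For the convergence claim, I would handle each of the three summands of \eqref{eq:error_bound} separately. The mixing term is bounded above by $\phi_\mu(g+1) + \phi_\nu(g+1)$, which vanishes as $n\to\infty$: since $\phi$-mixing coefficients are non-increasing and bounded, the summability hypothesis with $p<2$ forces $\phi_\mu(\ell),\phi_\nu(\ell)\to 0$ (split the sum at $\ell = n/2$ and use monotonicity to get $\phi_\mu^{1/2}(n/2) = \calO(n^{p-2})$), and $g(n)\to\infty$ by hypothesis. The term $3g/k$ vanishes because $g(n) = o(k(n))$. Finally, interpreting the growth constraint on $k(n)$ with a fixed multiplicative margin $\alpha < 1$ gives
\[
\tfrac{k(n)}{2} \log(|\X|\vee|\Y|) \leq \alpha(1-\tfrac{p}{2}) \log n,
\]
so $(|\X|^{k/2}+|\Y|^{k/2})/n^{1-p/2} \leq 2 n^{-(1-\alpha)(1-p/2)} \to 0$.

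The main obstacle is really only bookkeeping: choosing $t$ so that its own contribution is negligible compared to the entropy integral, while still being strictly positive and at most $\tfrac{1}{4}\|c\|_\infty$, and keeping careful track of how the constant in the summability hypothesis and the cost norm propagate into the final constant $C$. Because the covering-number bound is trivial (the integrand is constant in $\varepsilon$), there is no delicate chaining to perform, and Theorem \ref{thm:abstract_rates} does essentially all of the conceptual work.
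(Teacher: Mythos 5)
Your proposal is correct and follows essentially the same route as the paper's proof: bound the mixing prefactor by $\calO(n^{\nicefrac{p}{2}-1})$ via the summability hypothesis, use the trivial covering-number bound $\mathcal{N}(\X^k,\tfrac{1}{k}\cXk,\varepsilon)\leq|\X|^k$ to dominate the entropy integral, and substitute into Theorem \ref{thm:abstract_rates}. The only cosmetic difference is the treatment of $t$ (the paper lets $t\to 0$ rather than choosing $t=n^{-(1-\nicefrac{p}{2})}$), and your explicit verification of the final convergence claim, including the need to read the constraint on $k(n)$ with a fixed multiplicative margin, is if anything more careful than the paper's.
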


Corollary \ref{cor:rates} provides finite-sample control of the mean error in the estimated optimal joining cost.
In particular, it sheds some light on how the choice of block size $k$ interacts with the amount of dependence of the marginal processes (as quantified by their $\phi$-mixing coefficients) and the sample size $n$.
Previous work \citep{csiszar2010rate, talata2013divergence, talata2010divergence, gallo2011markov, bressaud1999speed} has established error bounds and rates of convergence for Markov approximations to ergodic processes.
However, it appears that no previous work has established such results for the $k$-block process estimate.
We remark that Theorem \ref{thm:abstract_rates} and Corollary \ref{cor:rates} include the special case of Ornstein's 
$\overline{d}$-distance and thus provide some additional insight into the estimation scheme for this distance proposed in \cite{ornstein1990sampling}.
To provide further context for Corollary \ref{cor:rates}, we consider two examples below.

\begin{ex}[IID Processes]
If $\mu$ and $\nu$ are iid processes, then $\oj(\mu, \nu) = \otc(\mu_1, \nu_1)$.
Moreover, $\phi_\mu(g) = \phi_\nu(g) = 0$ for every $g \geq 1$, and so we may let $k = 1$, $g = 0$, and $p = 1$.
Then by Corollary \ref{cor:rates}, we see that
\begin{equation*}
\mathbb{E}\left| \ojhat_{1,n} - \oj(\mu, \nu)\right| = \mathcal{O}\left(n^{-\nicefrac{1}{2}}\right). \\[.08in]
\end{equation*}
The rate above is consistent with known rates for the estimation of the 1-Wasserstein distance on finite spaces \citep{boissard2014mean}.
\end{ex}

For iid processes, the optimal joining problem reduces to the optimal coupling problem of their $1$-dimensional 
marginal measures. However, when at least one of the measures is not iid, the optimal joining need not be 
Markov of any order \citep{ellis1976thedj}, and one must let $k$ tend to infinity in order to estimate the full 
behavior of an optimal joining.  As such, one expects to find slower rates outside the iid setting.

\begin{ex}[Markov Processes]
\label{ex:markov_bound}
If $\mu$ and $\nu$ are aperiodic irreducible Markov chains, then there exist constants $C < \infty$ and $\rho \in (0,1)$ such that $\phi_\mu(g) \leq C \rho^g$ and $\phi_\nu(g) \leq C \rho^g$ \citep{davydov1974mixing, bradley2005basic} for each $g \geq 1$.
Thus the summability conditions in Corollary \ref{cor:rates} are satisfied with $p = 1$.
Applying Corollary \ref{cor:rates} with $k(n) = \left\lfloor\frac{\alpha \log(n)}{\log(|\X| \vee |\Y|) \vee 1}\right\rfloor$ 
and $g(n) = \left\lfloor\frac{\log(\alpha \log(n))}{\log(\nicefrac{1}{\rho})}\right\rfloor$ for any $\alpha \in (0,1)$ and 
$n$ large enough, we find
\begin{equation}
\label{eq:markov_bound}
\mathbb{E}\left|\ojhatkn - \oj(\mu, \nu)\right| = \mathcal{O}\left(\frac{\log(\log(n))}{\log(n)}\right).
\end{equation}
\end{ex}

It was established in \cite{marton1994entropy} that any sequence $k(n)$ growing faster than $\log n$ is inadmissible for every ergodic process with positive entropy rate.
Thus the sequence $k(n)$ in Example \ref{ex:markov_bound} is the best achievable rate (up to constant factors) for estimating the finite-dimensional distributions of aperiodic irreducible Markov chains in general.
However, the rate \eqref{eq:markov_bound} is substantially slower than the polynomial rate typically observed when estimating marginal distributions of Markov chains in, for example, total variation distance \citep{boissard2014mean}.
This disparity is due to the $\nicefrac{g}{k}$ term in \eqref{eq:error_bound}, which arises when joining approximations of the processes $\mu$ and $\nu$ in the proof of Theorem \ref{thm:abstract_rates}.

\section{The Entropic Optimal Joining Problem}\label{sec:entropic_oj}

A large body of recent work in optimal transport has focused on studying the computational and statistical properties of regularized versions of the optimal transport problem.
Entropic regularization in particular has attracted a great deal of interest from the machine learning and statistics communities as a means of smoothing the optimal transport problem and enabling more efficient computation of solutions.
For any $\eta > 0$, the \emph{entropic optimal transport problem} is obtained by subtracting the Shannon entropy $H(\pi) = -\sum_{u,v} \pi(u,v) \log \pi(u,v)$ from the optimal transport objective:
\begin{equation*}
\eotc(\mu, \nu) = \inf\limits_{\pi \in \Pi(\mu, \nu)} \left\{\int c \, d\pi - \eta H(\pi)\right\}.
\end{equation*}

Cuturi \citep{cuturi2013sinkhorn} showed that solutions to this problem have a special form and can be obtained via a matrix scaling method known as the Sinkhorn-Knopp algorithm \citep{sinkhorn1967diagonal}.
Subsequent work \citep{altschuler2017near, dvurechensky2018computational, lin2019efficient, lin2019acceleration, guo2020fast} has analyzed the complexity of this and other algorithms for entropic optimal transport in detail, showing that approximations of the optimal transport cost may be obtained in time that is nearly-linear in the dimension of the couplings under consideration.
Other work \citep{klatt2020empirical, genevay2018sample, mena2019statistical, hundrieser2021entropic} has studied the entropic problem from a statistical perspective, proving estimation error bounds and central limit theorems for the empirical entropic optimal transport cost.
In some cases, these estimates exhibit better sample complexity and faster rates of convergence than the best achievable quantities for the unregularized problem, which are known to suffer from the ``curse of dimensionality".
Recent work has found closed form solutions to the entropic problem between Gaussian distributions \citep{del2020statistical, tong2021entropy, janati2020entropic} and examined the convergence of solutions to solutions of the unregularized problem as the regularization coefficient $\eta$ converges to zero \citep{peyre2019computational, bernton2021entropic, nutz2021entropic}.
For additional details on entropic optimal transport, we refer the reader to \cite{peyre2019computational}.

In this section, we consider entropic regularization of the optimal joining problem.
We identify a natural penalty term for the optimal joining problem by viewing the regularized problem as a 
limit of entropic optimal transport problems with increasing dimension.  We observe that these problems converge 
to a regularized optimal joining cost with the \emph{entropy rate} as the penalty term.
Entropy rates have been studied in the context of stochastic processes and 
information theory for many years, dating back to Shannon \citep{shannon1948mathematical}.

\begin{defn}
Let $\calU$ be finite and let $\gamma \in \calM_s(\calU^\bbN)$ be a stationary measure.
For $k \geq 1$ define $H(\gamma_k) = -\sum_{u_1^k} \gamma_k(u_1^k) \log \gamma_k(u_1^k)$.
The \emph{entropy rate} of $\gamma$ is defined by $h(\gamma) := \lim_{k\rightarrow\infty} \frac{1}{k}H(\gamma_k)$.
\end{defn}

In other words, the entropy rate is the limiting joint entropy per symbol of the finite dimensional distributions of the process.
By subadditivity, the limit in the definition exists and is equal to $\inf_{k \geq 1} \frac{1}{k} H(\gamma_k)$. 
An iid process with one dimensional distribution $p$ has entropy rate equal to $H(p)$.
A stationary, aperiodic, irreducible Markov chain with stationary distribution $p$ and transition matrix $P$ 
has entropy rate given by $-\sum_{ij} p_i P_{ij} \log P_{ij}$.
Occasionally, we will use $H_k(\cdot)$ instead of $H(\cdot)$ to emphasize the dependence on the dimension $k$.
We remark that the entropy rate is known to be weakly upper semicontinuous on finite-alphabet sequence spaces \citep{walters2000introduction}.
We make use of this fact in Section \ref{sec:proofs_eoj}, for example, when establishing the consistency of our entropically regularized estimates, defined below.

\begin{rem}
The entropy rate of certain processes is preserved under randomization, as in Definition \ref{defn:block_process}.
In particular, if a process $\tilde{\gamma}$ is $k$-stationary for any $k \geq 1$ and $\gamma$ is the stationary process obtained by randomizing the start of $\tilde{\gamma}$, then $h(\tilde{\gamma}) = h(\gamma)$.
A formal statement of this fact along with a proof may be found in Appendix \ref{app:entropy}.
As our proposed estimate of an optimal joining is constructed by randomizing the start of a $k$-stationary process, 
this fact is needed when considering the entropy rate of the estimate.
\end{rem}

For $\eta > 0$, we define the \emph{entropic optimal joining problem} by
\begin{equation}\label{eq:entropic_oj}
\eojc(\mu, \nu)
\ = \
\inf\limits_{\lambda \in \J(\mu, \nu)} \left\{ \int c \, d\lambda_1 - \eta h(\lambda)\right\}.
\end{equation}

\noindent As in the unregularized problem, one can show under the stated assumptions that the infimum in \eqref{eq:entropic_oj} is attained.
We include a proof of this fact in Appendix \ref{app:existence}.
From now on, we will denote the set of joinings achieving the infimum in \eqref{eq:entropic_oj} by $\Jmin^\eta(\mu, \nu)$.
Moreover, we will drop the cost $c$ in our notation, writing $\eoj(\cdot, \cdot)$ for $\eojc(\cdot, \cdot)$ when there is no risk of confusion.
Note that \eqref{eq:entropic_oj} is still well-defined when $\eta = 0$ but in that case, we recover the standard optimal joining problem and thus refer to it by that name.

Proposition \ref{prop:oj_is_otcbar} shows that the optimal joining cost may be obtained as a limit of $k$-step optimal transport costs.
The next proposition extends this result to the entropic optimal joining problem.

\begin{restatable}[]{prop}{convergenceofeottoeoj}\label{prop:convergence_of_eot_to_eoj}
Let $\X$ and $\Y$ be finite and $\mu \in \calM_s(\X^\bbN)$ and $\nu \in \calM_s(\Y^\bbN)$.
Then for any $\eta \geq 0$, 
\begin{equation*}
\lim\limits_{k\rightarrow\infty} \frac{1}{k} \eotck(\mu_k, \nu_k) 
\ = \
\eoj(\mu, \nu).
\end{equation*}
\end{restatable}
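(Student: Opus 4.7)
The plan is to prove both inequalities in the claimed equality by standard compactness/semicontinuity arguments, leveraging the block process construction from Definition \ref{defn:block_process} and the upper semicontinuity of the entropy rate.

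For the upper bound $\limsup_k \frac{1}{k}\eotck(\mu_k,\nu_k) \leq \eoj(\mu,\nu)$, I would fix an arbitrary joining $\lambda \in \J(\mu,\nu)$ and observe that its $k$-dimensional marginal $\lambda_k$ is a coupling of $\mu_k$ and $\nu_k$, hence a feasible point for $\eotck(\mu_k,\nu_k)$. Stationarity gives $\int c_k \, d\lambda_k = k\int c \, d\lambda_1$, so
\begin{equation*}
\frac{1}{k}\eotck(\mu_k,\nu_k) \leq \int c \, d\lambda_1 - \eta \, \frac{H(\lambda_k)}{k}.
\end{equation*}
By definition of entropy rate, $H(\lambda_k)/k \to h(\lambda)$, so passing to the limit in $k$ and then taking the infimum over $\lambda \in \J(\mu,\nu)$ yields the upper bound.

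For the lower bound, for each $k$ let $\pi_k^\star$ attain $\eotck(\mu_k,\nu_k)$, and define the stationary measure $\lambda^k = \Lambda^k[\pi_k^\star] \in \calM_s((\X\times\Y)^\bbN)$. A direct computation using Definition \ref{defn:block_process} gives $\int c \, d\lambda^k_1 = k^{-1}\int c_k \, d\pi_k^\star$, and the independence of blocks in the construction combined with the entropy-rate-preserving property of the randomized-start noted in the remark preceding \eqref{eq:entropic_oj} gives $h(\lambda^k) = H(\pi_k^\star)/k$. Therefore
\begin{equation*}
\frac{1}{k}\eotck(\mu_k,\nu_k) = \int c \, d\lambda^k_1 - \eta \, h(\lambda^k).
\end{equation*}
To realize the liminf, I would select a subsequence $k_j$ with $\frac{1}{k_j}\eot_{c_{k_j}}(\mu_{k_j},\nu_{k_j}) \to \liminf_k \frac{1}{k}\eotck(\mu_k,\nu_k)$, and use compactness of $\calM((\X\times\Y)^\bbN)$ in the weak topology to extract a further subsequence along which $\lambda^{k_j} \Rightarrow \lambda^\star$ for some stationary $\lambda^\star$.

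The key step is to identify $\lambda^\star$ as a joining of $\mu$ and $\nu$, which reduces to the standard ergodic-theoretic fact that the stationary $k$-block approximation $\Lambda^k[\mu_k]$ converges weakly to $\mu$ as $k \to \infty$ (and likewise for $\nu$). Since $c$ is continuous on the finite product $\X \times \Y$, weak convergence yields $\int c \, d\lambda^{k_j}_1 \to \int c \, d\lambda^\star_1$, and the upper semicontinuity of the entropy rate on finite-alphabet sequence spaces gives $\limsup_j h(\lambda^{k_j}) \leq h(\lambda^\star)$. Combining these,
\begin{equation*}
\liminf_k \frac{1}{k}\eotck(\mu_k,\nu_k) \geq \int c \, d\lambda^\star_1 - \eta \, h(\lambda^\star) \geq \eoj(\mu,\nu),
\end{equation*}
where the last inequality holds because $\lambda^\star \in \J(\mu,\nu)$. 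Together with the upper bound this establishes the stated limit.

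The main obstacle is the lower-bound step: one must check (i) that randomizing the start of the independent $k$-block process indeed preserves the entropy rate so that $h(\lambda^k) = H(\pi_k^\star)/k$, and (ii) that the upper semicontinuity of the entropy rate can be applied to the subsequential weak limit. Both are standard facts in finite-alphabet symbolic dynamics, but they are where the argument has content; once they are in place, the rest is a straightforward compactness and marginal-identification argument.
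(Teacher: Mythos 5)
Your proposal is correct and follows essentially the same route as the paper: the upper bound via feasibility of the $k$-dimensional marginal of a (near-)optimal joining together with $H(\lambda_k)/k \to h(\lambda)$, and the lower bound via the stationary $k$-block construction $\Lambda^k[\pi_k^\star]$, weak compactness, identification of the subsequential limit as a joining, and upper semicontinuity of the entropy rate. The only cosmetic difference is that you argue with $\limsup$/$\liminf$ separately, whereas the paper first invokes a superadditivity (Fekete) lemma to guarantee the limit exists; both are valid.
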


\noindent 
In other words, the entropic optimal joining cost can be obtained as a limit of the average entropic optimal transport costs. 
Thus the entropy rate emerges as a natural regularizer for the optimal joining problem.
Moreover, the proposition suggests that the approach to estimating an optimal joinings may extend to the regularized setting.
In particular, when $k$ is large, a good estimate of $k^{-1} \eotck(\mu_k, \nu_k)$ should be a good estimate of $\eoj(\mu, \nu)$,
and existing algorithms for efficient computation of $k^{-1} \eotck(\mu_k, \nu_k)$ will translate to faster estimates of $\eoj(\mu, \nu)$.

Entropic penalization has appeared in some related work.
In \cite{mcgoff2019gibbs}, a regularized optimal joining problem with fiber entropy as the penalty term arises naturally in the context of Bayesian estimation of dynamical models.
Another line of work \citep{lopes2012entropy,lopes2015entropy} has investigated the role of entropic regularization in the thermodynamic formalism, which consists of an optimization of a linear functional over the set of invariant measures.
More recently, \cite{o2020optimal} proposed an extension of entropic techniques to a constrained optimal transport problem specifically for Markov chains.
In that work, the entropy constraint implies a constraint on the entropy rate of the set of joinings in question and leads to improved computational efficiency.
Existing work has yet to propose a principled regularization scheme for the optimal joining problem considered here.

Before moving on to the proposed estimation scheme, we consider the stability of the entropic optimal joining cost in $\eta$, and
in particular, the limiting behavior of $\eoj(\mu, \nu)$ when $\eta$ tends to zero.

\begin{restatable}[]{prop}{convergenceineta}
\label{prop:convergence_in_eta}
Let $\X$ and $\Y$ be finite and $\mu \in \calM_s(\X^\bbN)$ and $\nu \in \calM_s(\Y^\bbN)$.
Then the entropic optimal joining cost satisfies
\begin{equation*}
\lim_{\eta \rightarrow 0} \eoj(\mu, \nu) = \oj(\mu, \nu).
\end{equation*}
\end{restatable}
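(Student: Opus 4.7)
The plan is to prove the proposition by a short sandwich argument that bounds $\eoj(\mu,\nu)$ both above and below by quantities that converge to $\oj(\mu,\nu)$ as $\eta \to 0$. The key observation is that on the finite alphabet $\X \times \Y$ the entropy rate of any stationary measure $\lambda \in \J(\mu,\nu)$ is automatically bounded: $0 \leq h(\lambda) \leq \log(|\X||\Y|) =: M$. The nonnegativity is immediate from $H(\lambda_k) \geq 0$, and the upper bound follows from $H(\lambda_k) \leq k \log(|\X||\Y|)$ combined with the definition $h(\lambda) = \lim_k k^{-1} H(\lambda_k)$.

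For the upper bound, Proposition \ref{prop:properties_of_Jmin} guarantees the existence of some $\lambda^\ast \in \Jmin(\mu,\nu)$, i.e.\ a joining achieving $\int c \, d\lambda^\ast_1 = \oj(\mu,\nu)$. Using $\lambda^\ast$ as a feasible candidate in the entropic problem and invoking $h(\lambda^\ast) \geq 0$ yields
\begin{equation*}
\eoj(\mu,\nu) \ \leq \ \int c \, d\lambda^\ast_1 - \eta h(\lambda^\ast) \ \leq \ \oj(\mu,\nu).
\end{equation*}
For the lower bound, applying $h(\lambda) \leq M$ uniformly over $\lambda \in \J(\mu,\nu)$ gives, for any such $\lambda$,
\begin{equation*}
\int c \, d\lambda_1 - \eta h(\lambda) \ \geq \ \int c \, d\lambda_1 - \eta M \ \geq \ \oj(\mu,\nu) - \eta M.
\end{equation*}
Taking the infimum over $\lambda \in \J(\mu,\nu)$ gives $\eoj(\mu,\nu) \geq \oj(\mu,\nu) - \eta M$.

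Combining the two inequalities yields
\begin{equation*}
\oj(\mu,\nu) - \eta M \ \leq \ \eoj(\mu,\nu) \ \leq \ \oj(\mu,\nu),
\end{equation*}
and letting $\eta \to 0$ produces the desired conclusion. There is no substantial obstacle here: the only ingredients are the existence of an optimal joining (already established in Proposition \ref{prop:properties_of_Jmin}) and the trivial two-sided bound on $h(\lambda)$ for stationary measures on a finite product alphabet. No compactness or upper semicontinuity argument is needed for this limit, precisely because the regularization term is uniformly bounded across the feasible set.
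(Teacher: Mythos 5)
Your proof is correct, and it takes a genuinely different and more elementary route than the paper. The paper argues via optimizers: it takes a sequence $\eta_n \to 0$, selects $\lambda^n \in \Jmin^{\eta_n}(\mu,\nu)$ (whose existence requires the upper semicontinuity of $h$ and compactness of $\J(\mu,\nu)$, established in an appendix), extracts a weakly convergent subsequence, and derives the two-sided inequality $0 \leq \int c\, d\lambda^n_1 - \int c\, d\lambda^*_1 \leq \eta_n(h(\lambda^n) - h(\lambda^*))$ before passing to the limit. Your argument works directly at the level of the optimal values and needs only the uniform bound $0 \leq h(\lambda) \leq \log(|\X||\Y|)$ over the feasible set, giving the explicit quantitative estimate $\oj(\mu,\nu) - \eta\log(|\X||\Y|) \leq \eoj(\mu,\nu) \leq \oj(\mu,\nu)$, i.e.\ an $O(\eta)$ rate that the paper's proof does not state. (Your use of $h(\lambda) \leq \log(|\X||\Y|)$ is justified since $h(\lambda) = \inf_k k^{-1}H(\lambda_k) \leq H(\lambda_1) \leq \log(|\X||\Y|)$, and the upper bound can even dispense with Proposition \ref{prop:properties_of_Jmin} by using a near-optimal joining.) What the paper's heavier argument buys is additional information not claimed in the proposition's statement: subsequential weak convergence of the entropic optimal joinings themselves to the set $\Jmin(\mu,\nu)$ as $\eta \to 0$. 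For the stated claim about the costs, your proof is complete and arguably preferable.
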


Thus the entropic optimal joining cost converges to the unregularized optimal joining cost as the penalty parameter shrinks.
This behavior is consistent with analogous results in optimal transport \citep{peyre2019computational}.

\subsection{Extension of the Estimation Procedure}
The proposed estimation scheme (described in Section \ref{sec:estimate_oj}) may be easily extended to the entropic optimal joining problem.
One need only consider a modification of Step 2 in which one solves an entropic optimal transport problem for $\eta > 0$:

\begin{enumerate}[start=2, label={\bfseries Step \arabic*':}, leftmargin=2cm]
\item (Entropic optimal coupling) 
Find an entropic optimal coupling of $\hat{\mu}_{k,n}$ and $\hat{\nu}_{k,n}$.
Formally, let $\pi_k \in \Pi(\hat{\mu}_{k,n}, \hat{\nu}_{k,n})$ be any coupling such that $\int c_k \, d\pi_k - \eta H(\pi_k) \leq \int c_k \, d\pi'_k - \eta H(\pi'_k)$ for all $\pi'_k \in \Pi(\hat{\mu}_{k,n}, \hat{\nu}_{k,n})$.
%
Thus, $\pi_k$ has expected entropic $k$-step cost equal to $\eotck(\hat{\mu}_{k,n}, \hat{\nu}_{k,n})$.
To simplify notation in what follows, define
\begin{equation}
\label{eq:est_eoj_cost}
\eojhatetakn \ = \ \eojhatetak(X_1^n, Y_1^n) \ = \ k^{-1} \eotck(\hat{\mu}_{k,n}, \hat{\nu}_{k,n}).
\end{equation}
\end{enumerate}

One may then construct a stationary process from $\pi_k$ via the block process construction detailed in Definition \ref{defn:block_process}.
In particular, define $\esteojetakn = \Lambda^k[\pi_k] \in \calM_s(\X^\bbN \times \Y^\bbN)$.
We propose $\esteojetakn$ as an estimate of an entropic optimal joining of $\mu$ and $\nu$ and $\eojhatetakn$ as an estimate of the entropic optimal joining cost.
Note that we will regard $\eta$ as a fixed quantity in all but Proposition \ref{prop:convergence_in_eta} and thus omit it in our notation, writing $\esteojkn$ for $\esteojetakn$ and $\eojhatkn$ for $\eojhatetakn$ whenever there is no risk of confusion.

As in the unregularized case, we establish in Appendix \ref{app:estimate_properties} that the estimate $\esteojkn$ is a joining of empirical estimates of $\mu$ and $\nu$ with the desired expected entropic cost $\eojhatkn$.

\subsection{Consistency and Error Bound}\label{sec:entropic_consistency}

If one is interested in consistency of the proposed estimates as $n$ goes to infinity, reasoning similar to that
in Section \ref{sec:estimate_oj} suggests that on consider sequences 
$\{\esteojkn\}_{n\geq 1}$ and $\{\eojhatkn\}_{n \geq 1}$ for appropriate sequences $k(n) \rightarrow \infty$.
The following result extends Theorems \ref{thm:consistent_estimation_of_oj} and \ref{thm:abstract_rates} to the regularized setting.

\begin{restatable}[]{thm}{consistentestimationofeoj}
\label{thm:consistent_estimation_of_eoj}
Let $\X$ and $\Y$ be finite and $\mu \in \calM_s(\X^\bbN)$ and $\nu \in \calM_s(\Y^\bbN)$ be ergodic.
Then for any $\eta > 0$, there exists a sequence $k = k(n)$ with $k \rightarrow \infty$ such that 
$\eojhatkn \rightarrow \eoj(\mu, \nu)$ and $\esteojkn \Rightarrow \Jmin^\eta(\mu, \nu)$ almost surely as $n\rightarrow \infty$.
\end{restatable}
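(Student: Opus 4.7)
The plan is to mirror the proof strategy of Theorem \ref{thm:consistent_estimation_of_oj}, using Proposition \ref{prop:convergence_of_eot_to_eoj} in place of Proposition \ref{prop:oj_is_otcbar} and exploiting upper semicontinuity of the entropy rate to handle the new penalty term. The key structural observation, which follows from the block-process construction together with the randomization-preserves-entropy-rate remark, is that $\esteojkn$ has expected cost $k^{-1}\int c_k\,d\pi_k$ and entropy rate $H(\pi_k)/k$, so that
\begin{equation*}
\int c\,d(\esteojkn)_1 \,-\, \eta\, h(\esteojkn) \;=\; \eojhatkn.
\end{equation*}
Thus the objective value of the entropic optimal joining problem evaluated at the constructed estimate coincides exactly with $\eojhatkn$, which makes the consistency analysis mostly parallel to the unregularized case.

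First, I would fix $k$ and show that $\eojhatkn \to k^{-1}\eotck(\mu_k,\nu_k)$ almost surely as $n\to\infty$. Since $\X$ and $\Y$ are finite and $\mu,\nu$ are ergodic, the ergodic theorem gives $\hat\mu_{k,n}\to\mu_k$ and $\hat\nu_{k,n}\to\nu_k$ in total variation almost surely. On a finite alphabet, the map $(\alpha,\beta)\mapsto \eotck(\alpha,\beta)$ is continuous in its marginals (the feasible set $\Pi(\alpha,\beta)$ varies continuously in Hausdorff sense, and the entropic objective is continuous in $\pi$ with $H$ bounded), so the claimed convergence follows. Combining this with Proposition \ref{prop:convergence_of_eot_to_eoj}, which gives $k^{-1}\eotck(\mu_k,\nu_k)\to\eoj(\mu,\nu)$, a standard diagonal extraction produces a nondecreasing sequence $k(n)\to\infty$ along which $\eojhatkn\to\eoj(\mu,\nu)$ almost surely. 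In the same extraction I would also request (as in the proof of Theorem \ref{thm:consistent_estimation_of_oj}) that the $\X$- and $\Y$-marginals $\Lambda^{k(n)}[\hat\mu_{k(n),n}]$ and $\Lambda^{k(n)}[\hat\nu_{k(n),n}]$ converge weakly to $\mu$ and $\nu$ respectively; this uses that admissibility in the sense of Definition \ref{def:c_admissibility} implies weak convergence of the stationary $k$-block constructions to the true marginal processes and can be arranged along a diagonal because such admissible sequences exist under ergodicity.

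Next, for the joining convergence, I would argue by subsequential compactness. The sequence $\{\esteojkn\}$ lies in $\calM(\X^\bbN\times\Y^\bbN)$ (compact in the weak topology, since $\X$ and $\Y$ are finite), so every subsequence has a further weak limit point $\lambda^*$. By the previous paragraph, the $\X$- and $\Y$-marginals of $\lambda^*$ are $\mu$ and $\nu$, and $\lambda^*$ is itself stationary because stationarity is preserved under weak limits, so $\lambda^*\in\J(\mu,\nu)$. The functional $\lambda\mapsto\int c\,d\lambda_1$ is weakly continuous (since $c$ is a single-letter cost on a finite alphabet), while $h$ is weakly upper semicontinuous on stationary measures over finite-alphabet sequence spaces; hence $\lambda\mapsto\int c\,d\lambda_1-\eta\,h(\lambda)$ is weakly lower semicontinuous. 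Consequently,
\begin{equation*}
\int c\,d\lambda^*_1 \,-\, \eta\, h(\lambda^*) \;\leq\; \liminf_{n\to\infty} \Bigl(\int c\,d(\esteojkn)_1 - \eta\, h(\esteojkn)\Bigr) \;=\; \lim_{n\to\infty}\eojhatkn \;=\; \eoj(\mu,\nu),
\end{equation*}
which forces $\lambda^*\in\Jmin^\eta(\mu,\nu)$, establishing $\esteojkn\Rightarrow\Jmin^\eta(\mu,\nu)$.

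The main technical obstacle is verifying the upper semicontinuity and identification of $h(\esteojkn)$ with $H(\pi_k)/k$ while $k(n)\to\infty$: entropy rate is only upper semicontinuous, not continuous, so one cannot hope for equality of entropy rates in the limit, only the lower-semicontinuous inequality above, and care is needed to ensure this inequality is sharp enough that $\lambda^*$ is optimal. Fortunately, since $\eojhatkn$ converges to the minimum value $\eoj(\mu,\nu)$ and $\lambda^*$ is a feasible joining, the lower semicontinuity suffices: no strict inequality can occur in the limit, and $\lambda^*$ must attain the infimum.
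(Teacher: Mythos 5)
Your proof is correct and follows the same two--part architecture as the paper's: first establish $\eojhatkn \rightarrow \eoj(\mu,\nu)$ almost surely along a suitable $k(n)$, then combine subsequential compactness, the identity $\int c\, d(\esteojkn)_1 - \eta h(\esteojkn) = \eojhatkn$ (the paper's Proposition \ref{prop:entropic_est_is_joining}), weak upper semicontinuity of $h$, and feasibility of the limit point to force membership in $\Jmin^\eta(\mu,\nu)$; that second half of your argument is essentially verbatim what the paper does. The one place you diverge is in the first half: the paper controls $|\eojhatkn - \tfrac{1}{k}\eotck(\mu_k,\nu_k)|$ via the Lipschitz bound of Proposition \ref{prop:eot_is_lipschitz}, which splits the error into an optimal transport term (handled by $c$-admissibility of $k(n)$) and an entropy term $\tfrac{\eta}{k}|H(\hat\mu_{k,n})-H(\mu_k)|$ (handled by an explicit Borel--Cantelli construction), whereas you invoke joint continuity of $(\alpha,\beta)\mapsto\eotck(\alpha,\beta)$ for fixed $k$ and then a ``standard diagonal extraction.'' The continuity claim is fine on finite alphabets (it in fact follows from the paper's own Lemma \ref{lemma:eot_is_lipschitz}), but be aware that the diagonal extraction is where the real work is hiding: almost-sure convergence for each fixed $k$ gives a realization-dependent threshold $N(k)$, and producing a single \emph{deterministic} sequence $k(n)$ that works on a common full-measure set requires passing to convergence in probability and applying Borel--Cantelli --- which is precisely the step the paper carries out explicitly for the entropy terms (choosing $n(i)$ with failure probability $2^{-i}$) and delegates to the admissibility machinery of \cite{marton1994entropy} for the transport terms. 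So your route is a legitimate, slightly more compact packaging of the same ideas, but you should unfold that one sentence into the Borel--Cantelli argument to make it airtight.
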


The proof of Theorem \ref{thm:consistent_estimation_of_eoj} is similar to that of 
Theorem \ref{thm:consistent_estimation_of_oj}: we construct a $c$-admissible sequence $k(n)$ for $\mu$ and $\nu$ and apply a Lipschitz property of the entropic optimal transport cost to obtain the desired convergence.
An analog of Proposition \ref{prop:admissibility_consistency} holds for the regularized estimation scheme above.
The proofs of Theorem \ref{thm:abstract_rates} and Corollary \ref{cor:rates} can be extended to obtain an error 
bound for the estimated entropic optimal joining cost.

\begin{restatable}[]{thm}{eojerrorbound}
\label{thm:eoj_error_bound}
Let $\mu$ and $\nu$ have $\phi$-mixing coefficients $\phi_\mu$ and $\phi_\nu$, respectively, satisfying 
\begin{equation*}
\sum_{\ell=0}^n (n-\ell) \phi_\mu^{1/2}(\ell) = \calO(n^p) \quad\quad\mbox{and}\quad\quad \sum_{\ell=0}^n (n- \ell) \phi_\nu^{1/2}(\ell) = \calO(n^p)
\end{equation*}
for some $p \in [1, 2)$.
Then there exists a constant $C < \infty$ depending only on $\phi_\mu$ and $\phi_\nu$ such that for every $\eta > 0$, $k \geq 1$, $g \geq 0$, and large enough $n$,
\begin{align*}
\mathbb{E}\left|\eojhatkn - \eoj(\mu, \nu)\right| 
& \ \leq \ 
\|c\|_\infty (\phi_\mu(g+1) + \phi_\nu(g+1))\frac{k}{k+g} + (3\|c\|_\infty + 2\eta(\log |\X| + \log |\Y|)) \frac{g}{k} \\[.08in]
&\quad + u(k, n) \left(\frac{\|c\|_\infty}{2} + \frac{\eta}{k} \log \left(\frac{|\mathcal{X}|^{3k}}{u(k, n)}\right)\right) + v(k, n) \left(\frac{\|c\|_\infty}{2} + \frac{\eta}{k} \log \left(\frac{|\mathcal{Y}|^{3k}}{v(k, n)}\right)\right), 
\end{align*}
\vskip.03in
\noindent
where $u(k, n) = C |\X|^{k/2} n^{\nicefrac{p}{2}-1}$ and $v(k,n) = C |\Y|^{k/2} n^{\nicefrac{p}{2}-1}$.
In particular, if $k(n) < \frac{(2-p) \log n}{\log(|\X| \vee |\Y|) \vee 1}$ and $g(n) = o(k(n))$ with $k(n), g(n) \rightarrow \infty$, then the upper bound converges to zero as $n \rightarrow\infty$.
\end{restatable}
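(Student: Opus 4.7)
The plan is to mirror the proof of Theorem \ref{thm:abstract_rates} and Corollary \ref{cor:rates}, with additional care to handle the Shannon-entropy term. Starting from the triangle decomposition
\begin{equation*}
\mathbb{E}\left|\eojhatkn - \eoj(\mu,\nu)\right| \,\leq\, \mathbb{E}\left|\tfrac{1}{k}\eotck(\hat{\mu}_{k,n},\hat{\nu}_{k,n}) - \tfrac{1}{k}\eotck(\mu_k,\nu_k)\right| + \left|\tfrac{1}{k}\eotck(\mu_k,\nu_k) - \eoj(\mu,\nu)\right|,
\end{equation*}
I would bound the statistical and approximation contributions separately, with $\phi$-mixing driving both.

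For the approximation term, I would establish a quantitative version of Proposition \ref{prop:convergence_of_eot_to_eoj}. The direction $\tfrac{1}{k}\eotck(\mu_k,\nu_k) \leq \eoj(\mu,\nu) + (\text{vanishing})$ is immediate: pushing any $\lambda^* \in \Jmin^\eta(\mu,\nu)$ to its $k$-dimensional marginal $\lambda^*_k$ yields a feasible coupling of $\mu_k,\nu_k$, and subadditivity gives $H(\lambda^*_k)/k \geq h(\lambda^*)$. For the reverse inequality I would take an entropic OT plan $\pi_k$ for $(\mu_k,\nu_k)$, insert gaps of length $g$ between independent copies of $\pi_k$ (letting the gap coordinates follow the independent product of the marginals), and apply the block-process construction of Definition \ref{defn:block_process} at block size $k+g$ to produce a stationary joining. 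The $\phi$-mixing hypothesis controls the total-variation distance between the marginals of this joining and $(\mu,\nu)$, yielding the contribution $\|c\|_\infty(\phi_\mu(g+1)+\phi_\nu(g+1))k/(k+g)$ via a Lipschitz bound on $\eoj$ in its marginals. The gap coordinates contribute at most $g\|c\|_\infty$ to the cost and at most $g(\log|\mathcal{X}|+\log|\mathcal{Y}|)$ to the entropy per period $k+g$, producing the $(3\|c\|_\infty+2\eta(\log|\mathcal{X}|+\log|\mathcal{Y}|))g/k$ term.

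For the statistical term, the key new ingredient is a stability result for $\eotc$ on finite alphabets: for probability measures on $\mathcal{U}$ and $\mathcal{V}$ with $N = |\mathcal{U}||\mathcal{V}|$,
\begin{equation*}
\left|\eotc(\mu,\nu) - \eotc(\mu',\nu')\right| \,\leq\, \tfrac{\|c\|_\infty}{2}\bigl(\|\mu-\mu'\|_{\tv} + \|\nu-\nu'\|_{\tv}\bigr) + \eta\bigl[\omega_N(\|\mu-\mu'\|_{\tv}) + \omega_N(\|\nu-\nu'\|_{\tv})\bigr],
\end{equation*}
where $\omega_N(t) = t\log(N/t)$ is the standard Fano-type modulus of continuity of Shannon entropy on a simplex of size $N$. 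I would prove this by converting an entropic optimal plan for one pair of marginals into a near-optimal plan for the other via a marginal-correction coupling (splicing mass proportional to the TV discrepancy from the product of marginals), then applying the $\omega_N$ bound to the resulting entropy change. Applied at $\mathcal{U}=\mathcal{X}^k$ and $\mathcal{V}=\mathcal{Y}^k$, dividing by $k$, taking expectations, and invoking the $\phi$-mixing total-variation bound $\mathbb{E}\|\hat{\mu}_{k,n}-\mu_k\|_{\tv} \leq u(k,n) = C|\mathcal{X}|^{k/2}n^{p/2-1}$ established in the proof of Corollary \ref{cor:rates} (together with Jensen for the concave map $\omega_N$) produces the asserted $u(k,n)[\|c\|_\infty/2 + (\eta/k)\log(|\mathcal{X}|^{3k}/u(k,n))]$ structure.

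The main obstacle will be the entropy-continuity step: unlike the cost contribution, Shannon entropy is continuous only with a non-Lipschitz modulus, so passing from a random TV bound to a bound in expectation requires Jensen on the concave $t\mapsto t\log(1/t)$, and care is needed to fold the $\log N$ factor together with $\log(1/u(k,n))$ into the single $\log(|\mathcal{X}|^{3k}/u(k,n))$ form stated in the theorem. A secondary subtlety is that the block-process construction in the approximation step gives only a $k$-stationary process before randomizing the offset, and I would invoke the remark in Section \ref{sec:entropic_oj} (proved in Appendix \ref{app:entropy}) that this randomization preserves the entropy rate, so that the entropy rate of the constructed joining equals $H(\pi_k)/(k+g)$ plus the gap contribution. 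The final claim that the bound tends to zero under $k(n) < (2-p)\log n/\log(|\mathcal{X}|\vee|\mathcal{Y}|)$ and $g(n) = o(k(n))$ then follows by checking each of the four summands vanishes, using that $u(k,n), v(k,n) \to 0$ at polynomial rate while the logarithmic correction grows only polynomially in $k$.
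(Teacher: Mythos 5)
Your overall architecture matches the paper's: the same triangle decomposition into a statistical term and an approximation term, the same gap-block construction $\Lambda^{k+g}[\pi\otimes\gamma]$ with $\phi$-mixing controlling the $\overline{d}$-distance of its marginals to $\mu$ and $\nu$, the same appeal to entropy-rate invariance under randomizing the start, and the same combination of an entropy modulus of continuity with Jensen's inequality for the expected entropy error. The one place you genuinely diverge is the stability lemma for the entropic optimal transport cost. The paper proves this via the semidual formulation (Lemma \ref{lemma:eot_is_lipschitz} and Proposition \ref{prop:eot_is_lipschitz}): perturbing one marginal at a time, the $(c,\eta)$-transform argument shows the cost changes by at most $\otcXk(\hat{\mu}_{k,n},\mu_k)+\eta|H(\mu_k)-H(\hat{\mu}_{k,n})|$, so the entropy perturbation is measured on the \emph{marginal} alphabet $\X^k$. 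Your primal ``marginal-correction coupling'' instead measures the entropy change of the \emph{coupling}, so your modulus $\omega_N$ carries $N=|\X|^k|\Y|^k$; after the truncation needed on the event $\|\hat{\mu}_{k,n}-\mu_k\|_1>\nicefrac{1}{2}$ (the analogue of Lemma \ref{lem:bias_of_entropy}) this yields a factor $\log\bigl((|\X|\,|\Y|)^{3k}/u(k,n)\bigr)$ rather than the theorem's $\log(|\X|^{3k}/u(k,n))$. That is the same order in $k$ and does not affect the asymptotic conclusion, but it does not literally reproduce the stated inequality; recovering the marginal-wise form requires the decoupling that the semidual provides. Your cost contribution $\tfrac{\|c\|_\infty}{2}\|\hat{\mu}_{k,n}-\mu_k\|_1$ agrees with the paper's, since $\cXk\le\|c\|_\infty\delta_k$ pointwise.

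One further point in the approximation term deserves explicit care. After rearranging $\eoj(\Lambda^k[\mu,\alpha],\Lambda^k[\nu,\beta])\le\tfrac{1}{k+g}(\eotck(\mu_k,\nu_k)+g\|c\|_\infty)$ you are left with the term $-\tfrac{g}{k}\,\eoj(\Lambda^k[\mu,\alpha],\Lambda^k[\nu,\beta])$, which in the unregularized case is simply dropped because the cost is nonnegative, but here can be positive because the entropic objective can be negative. The paper bounds it by $\tfrac{\eta g}{k}(\log|\X|+\log|\Y|)$ using the fact that the independent joining maximizes the entropy rate among joinings with fixed marginals, together with the choice of zero-entropy gap-filling measures $\alpha,\beta$; this is one of the two sources of the coefficient $2\eta(\log|\X|+\log|\Y|)\tfrac{g}{k}$, the other being the entropy-rate deficit $h(\mu)-h(\Lambda^k[\mu,\alpha])$ inside the Lipschitz bound of Lemma \ref{lemma:oj_is_lipschitz}. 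Your sketch lands on the correct coefficient but attributes it entirely to the gap coordinates' entropy; both contributions must be accounted for, or the factor of $2$ will not come out.
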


The introduction of entropy rate regularization into the optimal joining problem results in an error bound that is strictly worse than that for the unregularized optimal joining problem.
This reflects the increased difficulty of estimating an entropic optimal joining, which entails simultaneously learning the finite-dimensional distributions as well as the entropies of the marginal processes from observations.
Despite the worse error bound, we find that the sequence $k(n)$ may be chosen in the same way as in the unregularized case.
Moreover, for non iid processes (for which $k, g \rightarrow \infty$), one finds that the bound is of the same order as in the unregularized case.
For example, in the setting of Example \ref{ex:markov_bound}, we have $\mathbb{E}|\eojhatkn - \eoj(\mu, \nu)| = \calO\left(\frac{\log(\log(n))}{\log(n)}\right)$.
Thus from an asymptotic perspective, there is no additional price paid for using entropic regularization when estimating the optimal joining cost.

\section{Discussion}\label{sec:discussion}
The extension of optimal transport techniques to stochastic processes is an important problem in statistics and machine learning.
In this paper, we presented a step in this direction, considering the case of finite-alphabet, stationary and ergodic processes.
We argued that, in this setting, one should consider a constrained form of the optimal transport problem, referred to as the optimal joining problem, in order to account for the long-term dynamics of the processes of interest.
Given finite sequences of observations, we proposed estimates of an optimal joining and the optimal joining cost, and we proved that these estimates are consistent in the large sample limit.
We presented an upper bound on the expected error of the estimated optimal joining cost in terms of the mixing coefficients of the two processes of interest.
Finally, building upon recent work in optimal transport, we also proposed a regularized problem, the entropic optimal joining problem, and extended the proposed estimation scheme, consistency result, and error bound to this new problem.

This work enables the principled application of optimal transport techniques to data arising as observations from stationary processes.
Future work may investigate additional properties and uses of the entropic optimal joining problem.
For example, are there conditions under which the entropic optimal joining cost exhibits a faster rate of convergence compared to the unregularized optimal joining cost?
Other work may extend our results to the setting of Polish spaces.
It was noted in Section \ref{sec:estimate_oj} that the arguments in the proof of Theorem \ref{thm:consistent_estimation_of_oj} may be adapted to the case when $\X$ and $\Y$ are compact and $c$ is continuous.
However, it is not clear whether the entropic optimal joining is always well-defined in that setting.
Moreover, the arguments in the proof of Theorem \ref{thm:abstract_rates} do not extend easily to continuous spaces, and so further consideration is necessary.

\section{Proofs}\label{sec:proofs}

In this section, we prove our stated results.
We begin by proving Proposition \ref{prop:oj_is_otcbar} from Section \ref{sec:preliminaries}.
Next, we state and prove an inequality for the estimated optimal joining cost that will be used throughout the rest of this section.
Then, we prove our main results from Sections \ref{sec:estimate_oj}, \ref{sec:rates}, and \ref{sec:entropic_oj} in the order that they appear in the text.
A small selection of auxiliary results are proven in the appendix and will be referred to throughout this section.

\subsection{Proofs from Section \ref{sec:preliminaries}}

Here we establish the second equality in Proposition \ref{prop:oj_is_otcbar}, 
which states that the optimal joining cost is equal to the optimal transport cost with respect to the averaged cost $\overline{c}$.
We remind the reader that the first equality in Proposition \ref{prop:oj_is_otcbar} was established in \cite{gray1975generalization}.
We begin by showing that solutions to the optimal joining problem are characterized by a \emph{cyclical monotonicity}
property.

\begin{defn}
For two sets $\calU$ and $\calV$ and a cost function $c: \calU \times \calV \rightarrow \mathbb{R}$, a set $C \subset \calU \times \calV$ is called $c$-\emph{cyclically monotone} if for every $N \geq 1$ and every sequence $(u^1, v^1), ..., (u^N, v^N) \in C$,
\begin{equation*}
\sum\limits_{\ell = 1}^N c(u^\ell, v^\ell) \leq \sum\limits_{\ell = 1}^N c(u^\ell, v^{\ell + 1}),
\end{equation*}

\noindent with the convention that $v^{N+1} = v^1$.
A probability measure $\gamma$ on $\calU \times \calV$ is called $c$-\emph{cyclically monotone} if there exists a $c$-cyclically monotone set $C \subset \calU \times \calV$ such that $\gamma(C) = 1$.
\end{defn}

The characterization of optimal couplings in terms of cyclical monotonicity has been studied in the optimal transport literature.
We require the following result.

\begin{ethm}[\cite{beiglbock2015cyclical}]\label{thm:cycmon_implies_optimal}
Let $\calU$ and $\calV$ be Polish, $\mu \in \calM(\calU)$, $\nu \in \calM(\calV)$, and $c: \calU\times \calV\rightarrow [0, \infty)$ be measurable.
Then any $c$-cyclically monotone coupling $\pi \in \Pi(\mu, \nu)$ satisfying $\int c \, d\pi < \infty$ is a solution to $\otc(\mu, \nu)$.
\end{ethm}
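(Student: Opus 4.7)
The plan is to reduce the general Polish/measurable setting to a finite discrete problem, where the implication ``cyclical monotonicity $\Rightarrow$ optimality'' is essentially combinatorial, and then upgrade the discrete statement by an approximation argument that exploits the cyclical-monotone support of $\pi$. I would deliberately avoid the standard Rockafellar route (building measurable dual potentials $\varphi, \psi$), since when $c$ is merely measurable one cannot in general guarantee that the potentials are measurable, and patching that up is more delicate than arguing directly with cycles.

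First, fix a $c$-cyclically monotone Borel set $\Gamma \subset \calU \times \calV$ with $\pi(\Gamma) = 1$, and suppose for contradiction that some $\pi' \in \Pi(\mu,\nu)$ satisfies $\int c \, d\pi' < \int c \, d\pi$. Decompose $\pi = \alpha + \beta$ and $\pi' = \alpha + \beta'$ with $\alpha = \pi \wedge \pi'$, so that $\beta$ and $\beta'$ are mutually singular, share the same $\calU$- and $\calV$-marginals, and still satisfy $\int c \, d\beta > \int c \, d\beta'$. Note that $\beta$ is still concentrated on $\Gamma$. This reduces the problem to ruling out a strictly cheaper competitor $\beta'$ sharing marginals with $\beta$, when $\beta$ is supported in $\Gamma$.

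Second, I would approximate $\beta$ and $\beta'$ by discrete measures. Using inner regularity of Borel probability measures on Polish spaces, choose a fine measurable partition of $\calU \times \calV$ and represent $\beta$ and $\beta'$ (up to arbitrarily small error in $\int c \, d(\cdot)$, justified because $\int c \, d\pi < \infty$ and uniform integrability of $c$ on finite-mass subsets) by finitely supported measures $\beta_N$ and $\beta_N'$, placing $\beta_N$'s atoms at points $(u_i, v_i) \in \Gamma$ and keeping the marginals of $\beta_N$ and $\beta_N'$ aligned on a common discretization. Now the two discrete measures have identical marginals on a finite set, so the signed measure $\beta_N' - \beta_N$ has zero marginals, hence by a standard flow/graph argument it can be written as a finite nonnegative combination of elementary ``swap cycles'' of the form $\sum_\ell (\delta_{(u_\ell, v_{\sigma(\ell)})} - \delta_{(u_\ell, v_\ell)})$ for some permutation $\sigma$ on a finite list of points $(u_\ell, v_\ell) \in \Gamma$.

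Third, the $c$-cyclical monotonicity of $\Gamma$ applied to each such cycle gives $\sum_\ell c(u_\ell, v_\ell) \leq \sum_\ell c(u_\ell, v_{\sigma(\ell)})$ (after decomposing $\sigma$ into disjoint cycles and using the definition on each one). Summing over all cycles yields $\int c \, d\beta_N \leq \int c \, d\beta_N'$, and letting the discretization refine contradicts $\int c \, d\beta > \int c \, d\beta'$. The main obstacle I expect is step two: making the discretization precise enough that (a) cycle supports of $\beta_N$ really lie in $\Gamma$ (requiring a careful choice of representative points via measurable selection), and (b) the approximation errors of $\int c \, d\beta_N \to \int c \, d\beta$ and $\int c \, d\beta_N' \to \int c \, d\beta'$ can be controlled simultaneously despite $c$ being only measurable (not continuous); this is where Polishness and $\sigma$-compactness of the supports, together with integrability of $c$ against $\pi$ and $\pi'$, are essential.
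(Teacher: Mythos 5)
First, a point of reference: the paper does not prove this statement at all --- it is imported verbatim from \cite{beiglbock2015cyclical}, so there is no in-paper argument to compare against. Your attempt is therefore a from-scratch proof of a known but genuinely deep theorem, and it has a real gap.

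The gap is exactly where you flag it, in Step 2, and it is not a technicality that Polishness or $\sigma$-compactness will repair: it is the entire content of the theorem. Your cycle argument requires evaluating $c$ at \emph{specific points} that you do not get to choose freely. To make the marginals of $\beta_N$ and $\beta_N'$ coincide exactly, the atoms of $\beta_N'$ must sit at cross-combinations $(u_{ik}, v_{lj})$ of first coordinates of $\Gamma$-representatives from one cell and second coordinates of $\Gamma$-representatives from another. These cross points form a $(\beta+\beta')$-null set, so a Lusin-type compact $K$ on which $c$ is continuous tells you nothing about the values $c(u_{ik}, v_{lj})$: a merely Borel $c$ can be arbitrarily large (or arbitrarily discontinuous) precisely at those finitely many points, destroying the convergence $\int c\,d\beta_N' \to \int c\,d\beta'$. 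If instead you place all atoms on a common grid to make the marginal bookkeeping easy, the atoms of $\beta_N$ leave $\Gamma$ and cyclical monotonicity can no longer be invoked. There is no way to satisfy both constraints simultaneously for a general measurable cost. This is why the literature does not prove the result by discretization: for continuous (or lower semicontinuous) $c$ your outline can be completed and is essentially the classical approximation proof, but for finite Borel costs the known arguments (Beiglb\"ock--Goldstern--Maresch--Schachermayer via a carefully constructed dual pair taking values in $[-\infty,\infty)$, or Beiglb\"ock's proof via the pointwise ergodic theorem on $(\calU\times\calV)^{\bbN}$, which is the route of the cited paper) are structurally different and avoid evaluating $c$ at points outside the supports of $\pi$ and $\pi'$. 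Your Step 1 reduction ($\pi = \alpha + \beta$, $\pi' = \alpha + \beta'$ with $\alpha = \pi \wedge \pi'$) and the Step 3 cycle decomposition of a finite circulation are both fine; the proof fails in between.
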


\noindent Under stronger assumptions on $c$ (lower semicontinuity and integrability), 
one may also establish the reverse implication, namely that any optimal coupling is $c$-cyclically 
monotone (see \cite{villani2008optimal}).
An analogous result holds for the optimal joining problem.

\begin{lem}\label{lemma:cycmon_iff_optimal}
Let $\X$ and $\Y$ be finite and $\mu \in \calM_s(\X^\bbN)$ and $\nu \in \calM_s(\Y^\bbN)$ be ergodic.
Then an ergodic joining $\lambda \in \J(\mu, \nu)$ is a solution to $\oj(\mu, \nu)$ if and only if it is $\overline{c}$-cyclically monotone.
\end{lem}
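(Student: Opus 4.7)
The plan is to prove the two implications separately, using Theorem \ref{thm:cycmon_implies_optimal} for the easy direction and a genericity argument for the harder one.

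For the $(\Leftarrow)$ direction, I would first note that $\overline{c}$ is bounded (since $c$ is bounded on the finite set $\X \times \Y$), so $\int \overline{c} \, d\lambda < \infty$. Theorem \ref{thm:cycmon_implies_optimal} then gives that $\lambda$ is an optimal coupling of $\mu$ and $\nu$ with respect to $\overline{c}$, i.e.\ $\int \overline{c} \, d\lambda \leq \int \overline{c} \, d\pi$ for every $\pi \in \Pi(\mu,\nu)$. Next, I would apply Birkhoff's ergodic theorem to the bounded function $(\bfx, \bfy) \mapsto c(x_1, y_1)$ under a general stationary joining $\lambda' \in \J(\mu, \nu)$: the averages $\frac{1}{k} c_k(x_1^k, y_1^k)$ converge $\lambda'$-a.s.\ to a shift-invariant limit, and dominated convergence yields $\int \overline{c} \, d\lambda' = \int c \, d\lambda'_1$. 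Applying this identity to $\lambda$ (ergodic) and an arbitrary joining $\lambda'$ gives $\int c \, d\lambda_1 \leq \int c \, d\lambda'_1$, so $\lambda \in \Jmin(\mu,\nu)$.

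For the $(\Rightarrow)$ direction, let $\lambda$ be an ergodic optimal joining, and define $G \subset \XN \times \YN$ to be the set of $(\bfx, \bfy)$ for which $\bfx$ is $\mu$-generic (its empirical measure converges weakly to $\mu$), $\bfy$ is $\nu$-generic, and $(\bfx, \bfy)$ is $\lambda$-generic. By Birkhoff applied to $\mu$, $\nu$, and $\lambda$, $\lambda(G) = 1$. The crucial step is to show that $\overline{c}(\bfx, \bfy) \geq \oj(\mu, \nu)$ whenever $\bfx$ is $\mu$-generic and $\bfy$ is $\nu$-generic. For such $(\bfx, \bfy)$, form the Ces\`aro measures $\lambda_N := \frac{1}{N}\sum_{n=0}^{N-1} \delta_{S^n(\bfx, \bfy)}$, whose marginals converge weakly to $\mu$ and $\nu$. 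Pick a subsequence $N_j$ along which $\frac{1}{N_j} c_{N_j}(x_1^{N_j}, y_1^{N_j}) \to \overline{c}(\bfx, \bfy)$, then extract a further weak limit $\lambda_\infty$ of $\lambda_{N_j}$ using compactness of $\calM(\XN \times \YN)$. The Ces\`aro structure and weak continuity arguments show $\lambda_\infty$ is shift-invariant with the correct marginals, hence $\lambda_\infty \in \J(\mu, \nu)$, and continuity of $c$ on the discrete space $\X \times \Y$ forces $\int c \, d(\lambda_\infty)_1 = \overline{c}(\bfx, \bfy) \geq \oj(\mu, \nu)$. On $G$, $\lambda$-genericity combined with optimality gives $\overline{c} = \int c \, d\lambda_1 = \oj(\mu, \nu)$, so for any $(x^1, y^1), \dots, (x^N, y^N) \in G$ (with $y^{N+1} := y^1$),
\begin{equation*}
\sum_{i=1}^N \overline{c}(x^i, y^{i+1}) \;\geq\; N \oj(\mu, \nu) \;=\; \sum_{i=1}^N \overline{c}(x^i, y^i),
\end{equation*}
which is $\overline{c}$-cyclical monotonicity on the $\lambda$-full-measure set $G$.

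The main obstacle is the lower bound $\overline{c}(\bfx, \bfy) \geq \oj(\mu, \nu)$ on pairs of marginally generic sequences. This combines a double subsequence extraction (one to realize the $\limsup$ in $\overline{c}$, a second to extract a weak limit of the empirical joinings) with the continuity of $c$ on $\X \times \Y$ and the compactness of $\calM(\XN \times \YN)$; all of these use the finiteness of the alphabets in an essential way. Ergodicity of $\lambda$ is equally critical, because Birkhoff then collapses $\overline{c}$ to the single value $\oj(\mu, \nu)$ on $G$, matching the lower bound exactly and producing cyclical monotonicity with equality on the diagonal indices.
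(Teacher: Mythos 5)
Your proof is correct, and while the backward implication matches the paper's argument essentially verbatim (Theorem~\ref{thm:cycmon_implies_optimal} plus the Birkhoff identity $\int \overline{c}\,d\lambda' = \int c\,d\lambda'_1$ for stationary joinings), your forward implication takes a genuinely different route. The paper argues by contradiction: assuming the cyclical monotonicity inequality fails for some $(\bfx^1,\bfy^1),\dots,(\bfx^N,\bfy^N)$, it forms the averaged orbit measures $\frac{1}{nN}\sum_{\ell}\sum_{k}\delta_{(\sigma^k\bfx^\ell,\,\tau^k\bfy^{\ell+1})}$ over the entire permuted family at once, extracts a weak limit in $\J(\mu,\nu)$, and bounds its cost by $\frac{1}{N}\sum_\ell \overline{c}(\bfx^\ell,\bfy^{\ell+1})$ (via a $\limsup$-of-averages $\leq$ average-of-$\limsup$s step), contradicting optimality. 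You instead prove the sharper pointwise statement that $\overline{c}(\bfx,\bfy)\geq \oj(\mu,\nu)$ for \emph{every} pair with $\bfx$ $\mu$-generic and $\bfy$ $\nu$-generic, by applying the same Ces\`aro-limit-of-Dirac-masses construction to a single pair; cyclical monotonicity then drops out immediately because the diagonal terms attain $\oj(\mu,\nu)$ exactly on your full-measure set $G$, while every off-diagonal term is at least $\oj(\mu,\nu)$. Both proofs rest on the same compactness machinery (the paper's Lemma~\ref{lemma:weakly_convergent_subsequence}), but your version is more modular, avoids the contradiction and the interchange of $\limsup$ with averaging, and isolates a lower bound on $\overline{c}$ over generic pairs that is of independent interest; the paper's version handles the $N$ permuted pairs in one stroke without needing to name the genericity sets separately for each coordinate. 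One small point worth making explicit when you write this up: membership of $(x^{i+1},y^{i+1})$ in $G$ is what certifies that $y^{i+1}$ is $\nu$-generic, which is exactly what your lower bound needs for the off-diagonal term $\overline{c}(x^i,y^{i+1})$ -- you use this implicitly and it is correct, but it deserves a sentence.
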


\begin{proof}
The limiting average cost $\overline{c}$ is invariant under the joint left-shift map $\sigma \times \tau$, and therefore
by the pointwise ergodic theorem 
$\int c \, d\lambda_1 = \int \overline{c} \, d\lambda$ for every $\lambda \in \J(\mu, \nu)$.
Taking infima, we find that $\oj(\mu, \nu) = \ojcbar(\mu, \nu)$.
Let $\lambda \in \J(\mu, \nu)$ be an ergodic $\overline{c}$-cyclically monotone joining. 
Then $\lambda$ is a solution to $\otcbar(\mu, \nu)$ by Theorem \ref{thm:cycmon_implies_optimal}, and therefore
\begin{equation*}
\int c \, d\lambda_1 = \int \overline{c} \, d\lambda = \otcbar(\mu, \nu) \leq \ojcbar(\mu, \nu) = \oj(\mu, \nu)
\end{equation*}

\noindent and it follows that $\lambda$ is necessarily a solution to $\oj(\mu, \nu)$.

We now show that any ergodic optimal joining is $\overline{c}$-cyclically monotone.
Let $\lambda \in \Jmin(\mu, \nu)$ be ergodic.
As $\lambda$, $\mu$, and $\nu$ are ergodic, the pointwise ergodic theorem ensures that

\begin{enumerate}
\item There exists a set $D \subset \X^\bbN \times \Y^\bbN$ with $\lambda(D) = 1$ on which $\overline{c}$ is constant and equal to 
$\int c \, d\lambda_1$.
\item There exist sets $E \subset \X^\bbN$ and $F \subset \Y^\bbN$ such that $\mu(E) = \nu(F) = 1$, 
and for any $\bfx \in E$, $\bfy \in F$, the probability measures $\mu_{\mathbf{x}}^n := \frac{1}{n} \sum_{\ell = 0}^{n-1} \delta_{\sigma^\ell \bfx}$ and $\nu^n_{\mathbf{y}} := \frac{1}{n} \sum_{\ell=0}^{n-1} \delta_{\tau^\ell \bfy}$ satisfy $\mu_{\mathbf{x}}^n \Rightarrow \mu$ and $\nu_{\mathbf{y}}^n \Rightarrow \nu$.
\end{enumerate}

\noindent 
Let $C = D \cap (E \times F)$.  Then $\lambda(C) = 1$, 
so we need only show that $C$ is $\overline{c}$-cyclically monotone.
Let $N \geq 1$ and $(\bfx^1, \bfy^1), ..., (\bfx^N, \bfy^N) \in C$, and suppose by way of contradiction that 
\begin{equation*}
\sum\limits_{\ell=1}^N \overline{c}(\bfx^\ell, \bfy^\ell) > \sum\limits_{\ell=1}^N \overline{c}(\bfx^\ell, \bfy^{\ell+1}),
\end{equation*}

\noindent 
where we use the convention $\bfy^{N+1} = \bfy^1$.  Define a sequence of probability measures $\lambda^n$ on 
$\X^\bbN \times \Y^\bbN$ as follows
\begin{equation*}
\lambda^n := \frac{1}{nN}\sum\limits_{\ell=1}^N \sum\limits_{k=0}^{n-1} \delta_{(\sigma^k \bfx^\ell, \tau^k \bfy^{\ell+1})},
\end{equation*}

\noindent 
For each $n$ the measure $\lambda^n$ is a coupling of $\overline{\mu}^n = \frac{1}{N} \sum_{\ell=1}^N \mu^n_{\mathbf{x}^\ell}$ 
and $\overline{\nu}^n = \frac{1}{N} \sum_{\ell=1}^N \nu^n_{\mathbf{y}^\ell}$ 
where $\mu_{\mathbf{x}^\ell}^n$ and $\nu_{\mathbf{y}^\ell}^n$ are defined as in the definitions of the events $E$ and $F$ above.
The definition of $C$ ensures that $\overline{\mu}^n \Rightarrow \mu$ and $\overline{\nu}^n \Rightarrow \nu$ as $n\rightarrow \infty$.
Applying Lemma \ref{lemma:weakly_convergent_subsequence} in Appendix \ref{app:weakconvergence} we find that there is a subsequence $\lambda^{n_k}$ converging weakly to some $\tilde{\lambda} \in \J(\mu, \nu)$.
To simplify notation, we drop the subscript and refer to this subsequence as $\lambda^n$.
Using the fact that $\overline{c}$ is constant on $C$ and that $c$ is continuous and bounded, we have
\begin{align*}
\int c \, d\tilde{\lambda}_1 &=  \lim\limits_{n\rightarrow\infty} \int c \, d\lambda_1^n \\
&= \limsup\limits_{n\rightarrow\infty} \frac{1}{nN}\sum\limits_{\ell=1}^N \sum\limits_{k=1}^n c(x^\ell_k, y^{\ell+1}_k) \\
&\leq \frac{1}{N}\sum\limits_{\ell=1}^N \limsup\limits_{n\rightarrow\infty} \frac{1}{n}\sum\limits_{k=1}^n c(x^\ell_k, y^{\ell+1}_k) \\
&=  \frac{1}{N}\sum\limits_{\ell=1}^N \overline{c}(\bfx^\ell, \bfy^{\ell+1}) \\
&< \frac{1}{N} \sum\limits_{\ell=1}^N \overline{c}(\bfx^\ell, \bfy^\ell) \\
&= \int c \, d\lambda_1 \\
&= \oj(\mu, \nu),
\end{align*}
\noindent a contradiction. 
Thus $C$ is $\overline{c}$-cyclically monotone and the result follows.
\end{proof}

\ojisotcbar*
\begin{proof}
Under the stated conditions, it is known \citep{shields1996ergodic} that there exists an ergodic joining 
$\lambda \in \Jmin(\mu, \nu)$.
Lemma \ref{lemma:cycmon_iff_optimal} ensures that $\lambda$ is $\overline{c}$-cyclically monotone, and therefore,
by Theorem \ref{thm:cycmon_implies_optimal}, $\lambda$ is a solution to $\otcbar(\mu, \nu)$. 
It follows that $\oj(\mu, \nu) = \int c \, d\lambda_1 = \otcbar(\mu, \nu)$.
\end{proof}

\subsection{Preliminary Results}
Before proving the main results from Sections \ref{sec:estimate_oj}-\ref{sec:entropic_oj}, we establish a bound on the discrepancy between two entropic optimal transport costs in Lemma \ref{lemma:eot_is_lipschitz} below.
Let $\calU$ and $\calV$ be finite, $\alpha \in \calM(\calU)$ and $\beta \in \calM(\calV)$, $c: \calU \times \calV \rightarrow \mathbb{R}_+$, and $\eta > 0$.
Recall that the optimal transport cost satisfies 
\begin{equation}\label{eq:ot_dual}
\otc(\alpha, \beta) = \max\limits_{\substack{f: \calU \rightarrow \mathbb{R} \\ g: \calV \rightarrow \mathbb{R}}} \left\{ \int f \, d\alpha + \int g \, d\beta: f(u) + g(v) \leq c(u, v), \, \forall (u, v) \in \calU \times \calV\right\}.
\end{equation}
This equivalence is known as Kantorovich duality and is detailed, for example, in \cite{villani2008optimal}.
Moreover, it is established in \cite[Proposition 2.4]{cuturi2018semidual} that the entropic optimal transport problem satisfies
\begin{equation}\label{eq:eot_semidual}
\eotc(\alpha, \beta) = \max\limits_{f: \calU \rightarrow \mathbb{R}} \left\{\int f \, d\alpha + \int \tfb \, d\beta\right\} = \max\limits_{g: \calV \rightarrow \mathbb{R}} \left\{\int \tga \, d\alpha + \int g \, d\beta\right\},
\end{equation}
where 
\begin{equation*}
\tfb(v) = \eta \log \beta(v) - \eta \log \left(\sum\limits_u \exp\left\{\frac{1}{\eta}(f(u) - c(u,v))\right\} \right)
\end{equation*}
and
\begin{equation*}
\tga(u) = \eta \log \alpha(u) - \eta \log \left(\sum\limits_v \exp\left\{\frac{1}{\eta}(g(v) - c(u,v))\right\} \right).
\end{equation*}
\noindent The formulation \eqref{eq:eot_semidual} is referred to as the semidual of the entropic optimal transport problem while the quantities $\tfb$ and $\tga$ are referred to as the $(c,\eta)$-transforms of $f$ and $g$ with respect to $\beta$ and $\alpha$, respectively.
In what follows, we will let
\begin{equation*}
\tg(u) =- \eta \log \left(\sum\limits_v \exp\left\{\frac{1}{\eta}(g(v) - c(u,v))\right\} \right),
\end{equation*}
\noindent to simplify notation.
Note that $\tga(u) = \eta \log \alpha(u) + \tg(u)$.
Our proof of Lemma \ref{lemma:eot_is_lipschitz} will leverage the duality \eqref{eq:ot_dual} and \eqref{eq:eot_semidual} 
as well as the following basic facts about $\tf$ and $\tg$.

\begin{restatable}[]{lem}{cetatransformfacts}
\label{lem:ceta_transform_facts}
Let $(\calU, d_\calU)$ and $(\calV, d_\calV)$ be finite pseudometric spaces, and let $f: \calU \rightarrow \mathbb{R}$ and $g: \calV \rightarrow \mathbb{R}$ be real-valued functions.
Furthermore, let $c: \calU \times \calV \rightarrow \mathbb{R}_+$ be a non-negative cost function satisfying $|c(u, v) - c(u', v')| \leq M (d_\calU(u, u') + d_\calV(v, v'))$ for all $u, u' \in \calU$ and $v, v' \in \calV$ for some $M \in \mathbb{R}$.
Then for any $\eta > 0$, $\tf$ and $\tg$ satisfy $|\tf(v) - \tf(v')| \leq M d_\calV(v, v')$ and $|\tg(u) - \tg(u')| \leq M \, d_\calU(u, u')$ for all $u, u' \in \calU$ and $v, v' \in \calV$.
\end{restatable}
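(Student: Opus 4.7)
The plan is to exploit the soft-min structure of the $(c,\eta)$-transform. The quantity $\tg(u) = -\eta \log \sum_v \exp\{(g(v) - c(u,v))/\eta\}$ is a smoothed infimum of $c(u,v) - g(v)$ over $v \in \calV$, and $\tf$ has an analogous soft-min form. The key point is that the $g(v)$ term does not depend on $u$, so the entire $u$-dependence of $\tg$ lives in the cost $c(u,v)$, which by the joint Lipschitz hypothesis is $M$-Lipschitz in $u$ with respect to $d_\calU$ uniformly in $v$. Log-sum-exp operations preserve Lipschitz constants, so this alone should yield the result.

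Concretely, I would fix $u, u' \in \calU$ and use the hypothesis $|c(u,v) - c(u',v)| \leq M \, d_\calU(u,u')$ to obtain $c(u',v) \leq c(u,v) + M \, d_\calU(u,u')$ for every $v$. This gives the pointwise bound $g(v) - c(u',v) \geq g(v) - c(u,v) - M \, d_\calU(u,u')$, which after dividing by $\eta$, exponentiating, and summing over $v$ yields
\begin{equation*}
\sum_v \exp\!\left\{\frac{g(v) - c(u',v)}{\eta}\right\} \ \geq \ e^{-M d_\calU(u,u')/\eta} \sum_v \exp\!\left\{\frac{g(v) - c(u,v)}{\eta}\right\}.
\end{equation*}
Applying $-\eta \log(\cdot)$ to both sides (which is order-reversing) produces $\tg(u') \leq \tg(u) + M \, d_\calU(u,u')$. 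Swapping the roles of $u$ and $u'$ gives the matching inequality in the opposite direction, so $|\tg(u) - \tg(u')| \leq M \, d_\calU(u,u')$. The bound for $\tf$ follows from exactly the same argument with the roles of $\calU$ and $\calV$ exchanged.

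I do not foresee any real obstacle: the lemma is essentially an instance of the general fact that log-sum-exp is $1$-Lipschitz in the sup norm on its inputs, combined with the fact that subtracting an argument-independent term ($g(v)$ or $f(u)$) does not affect Lipschitz moduli. The only thing to verify carefully is that the Lipschitz hypothesis on $c$ supplies a uniform-in-the-other-variable bound, which is immediate by setting $v = v'$ (or $u = u'$) in the joint Lipschitz inequality. Note also that the answer is independent of $\eta$, as the $\eta$ factors cancel between the exponent and the outer logarithm.
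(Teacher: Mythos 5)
Your proposal is correct and follows essentially the same argument as the paper's proof: both bound the exponent $g(v)-c(u',v)$ using the Lipschitz hypothesis on $c$ (uniformly in $v$), factor the resulting constant out of the sum, apply the order-reversing $-\eta\log(\cdot)$, and conclude by symmetry in $u$ and $u'$. The only cosmetic difference is that the paper inserts and factors the term $c(u',v)-c(u,v)$ inside the sum before bounding it, whereas you bound the exponent directly; the content is identical.
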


\noindent 
A proof of Lemma \ref{lem:ceta_transform_facts} is provided in Appendix \ref{sec:transform_properties}.
A detailed discussion of the $(c, \eta)$-transform and its use in optimal transport can be found in \cite{peyre2019computational}.
Now we may proceed to the result of interest.

\begin{lem}\label{lemma:eot_is_lipschitz}
Let $\calU$ and $\calV$ be finite and let $c : \calU\times\calV \rightarrow \mathbb{R}_+$ be a cost function.
Then for any $\eta \geq 0$, any $\alpha, \alpha' \in \calM(\calU)$, and any $\beta \in \calM(\calV)$,
it holds that
\begin{align}\label{eq:eot_is_lipschitz}
\eotc(\alpha, \beta) - \eotc(\alpha', \beta) \leq \otcU(\alpha, \alpha') + \eta (H(\alpha') - H(\alpha)).
\end{align}
The analogous bound for a pair of measures in $\calM(\calV)$ also holds.
\end{lem}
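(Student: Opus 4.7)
The plan is to use the semidual formulation \eqref{eq:eot_semidual} of entropic optimal transport. Rewriting $\tga = \eta \log \alpha + \tg$ with $\tg(u) = -\eta \log \sum_v \exp\{(g(v) - c(u,v))/\eta\}$, the semidual takes the form
\begin{align*}
\eotc(\alpha, \beta) \ = \ \max_{g: \calV \to \mathbb{R}} \left\{ -\eta H(\alpha) + \int \tg \, d\alpha + \int g \, d\beta \right\},
\end{align*}
in which the dependence on $\alpha$ has been isolated into the entropy term and the integral against $\tg$. Let $g^*$ attain the supremum at the pair $(\alpha, \beta)$, and use it as a feasible (generally suboptimal) choice in the semidual problem defining $\eotc(\alpha', \beta)$. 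Subtracting, the $\int g^* \, d\beta$ terms cancel and one obtains
\begin{align*}
\eotc(\alpha, \beta) - \eotc(\alpha', \beta) \ \leq \ \eta \bigl( H(\alpha') - H(\alpha) \bigr) + \int \tg^* \, d\alpha - \int \tg^* \, d\alpha'.
\end{align*}

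Next I would bound the residual linear functional by $\otcU(\alpha, \alpha')$. The triangle inequality $|c(u, v) - c(u', v')| \leq \cU(u, u') + \cV(v, v')$ is immediate from the definition of the adapted costs, so Lemma \ref{lem:ceta_transform_facts} applies with $M = 1$, $d_\calU = \cU$, $d_\calV = \cV$, and yields $|\tg^*(u) - \tg^*(u')| \leq \cU(u, u')$. Consequently $(\tg^*, -\tg^*)$ is a feasible dual pair in the Kantorovich dual \eqref{eq:ot_dual} of $\otcU(\alpha, \alpha')$, giving
\begin{align*}
\int \tg^* \, d\alpha - \int \tg^* \, d\alpha' \ \leq \ \otcU(\alpha, \alpha').
\end{align*}
Combining this with the previous display yields the claimed inequality.

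The boundary case $\eta = 0$ can be handled either by passing to the limit $\eta \downarrow 0$, since both sides of the inequality are continuous in $\eta$, or by running the analogous argument with the classical Kantorovich duality \eqref{eq:ot_dual} in place of the semidual: select an optimal dual pair $(f^*, g^*)$ for $\otc(\alpha, \beta)$ with $f^*$ equal to the $c$-transform of $g^*$, note that $f^*$ is automatically $1$-Lipschitz with respect to $\cU$ by the same triangle inequality, and conclude by the feasibility-and-suboptimality trick (with the entropy term absent). The main obstacle is the Lipschitz control of the $(c, \eta)$-transform $\tg^*$ in terms of $\cU$, which is precisely what Lemma \ref{lem:ceta_transform_facts} provides; once this is in hand the rest is a short manipulation of the dual. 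The symmetric bound for a pair of measures in $\calM(\calV)$ follows by the same argument applied to the semidual in which $f$ rather than $g$ is the free variable.
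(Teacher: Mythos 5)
Your proposal is correct and follows essentially the same route as the paper's proof: isolate the entropy term in the semidual, reuse the optimal dual potential $g$ from the pair $(\alpha,\beta)$ as a feasible choice for $(\alpha',\beta)$, and control the residual $\int \tg\,d\alpha - \int \tg\,d\alpha'$ via the Lipschitz bound of Lemma \ref{lem:ceta_transform_facts} and Kantorovich duality for $\otcU$. The paper handles $\eta = 0$ by the limiting argument you mention (citing continuity of $\eotc$ in $\eta$), and your alternative direct argument via the classical $c$-transform is equally valid.
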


\begin{proof}
We begin by considering the case $\eta > 0$.
Let $\alpha$, $\alpha' \in \calM(\calU)$ and let
$\beta \in \calM(\calV)$.
By \eqref{eq:eot_semidual} there exists $g: \calV \rightarrow \mathbb{R}$ be such that 
\begin{equation}\label{eq:dual_upperbound}
\eotc(\alpha, \beta) = \int \tga \, d\alpha + \int g \, d\beta.
\end{equation}
\noindent Rewriting $\tga$, we have
\begin{align*}
\eotc(\alpha,\beta) &= \int \left(\tg + \eta \log \alpha\right) \, d\alpha + \int g \, d\beta \\
&= \int \tg \, d\alpha + \int g \, d\beta - \eta H(\alpha).
\end{align*}
\noindent As $g$ is feasible for the semidual problem of $\eotc(\alpha', \beta)$, \eqref{eq:eot_semidual} implies that
\begin{align}
\eotc(\alpha', \beta) &\geq \int \tgap \, d\alpha' + \int g \, d\beta \nonumber \\
&= \int \tg \, d\alpha' + \int g \, d\beta - \eta H(\alpha').\label{eq:semidual_lb}
\end{align}
\noindent Combining \eqref{eq:dual_upperbound} and \eqref{eq:semidual_lb}, we find that
\begin{equation*}
\eotc(\alpha, \beta) - \eotc(\alpha', \beta) \leq  \int \tg \, d\alpha - \int \tg \, d\alpha'  + \eta (H(\alpha') - H(\alpha)).
\end{equation*}
\noindent Note that  
$|c(u, v) - c(\tilde{u}, \tilde{v})| \leq \cU(u, \tilde{u}) + \cV(v, \tilde{v})$, and therefore
by Lemma \ref{lem:ceta_transform_facts}, $\tg$ satisfies $\tg(u) - \tg(\tilde{u}) \leq \cU(u, \tilde{u})$.
Thus the pair $(\tg, -\tg)$ is feasible for the dual \eqref{eq:ot_dual} of $\otcU(\alpha, \alpha')$ and it follows that
\begin{equation*}
\eotc(\alpha, \beta) - \eotc(\alpha', \beta) \leq \otcU(\alpha, \alpha') + \eta (H(\alpha') - H(\alpha)).
\end{equation*}
Taking the limit as $\eta \rightarrow 0$ of \eqref{eq:eot_is_lipschitz} and applying \cite[Proposition 2.1]{cuturi2018semidual}, we obtain the result for $\eta = 0$.
\end{proof}

The next proposition details the implication of Lemma \ref{lemma:eot_is_lipschitz}	
for the $k$-step entropic optimal transport cost.
The proof follows from a straightforward application of Lemma \ref{lemma:eot_is_lipschitz} and the pointwise inequalities $\ckXk \leq \cXk$ and $\ckYk \leq \cYk$.

\begin{prop}\label{prop:eot_is_lipschitz}
For any $\eta \geq 0$, $n \geq 1$, and $k \in \{1, ..., n\}$, 
\begin{align*}
\left|\eojhatk(X_1^n, Y_1^n) - \frac{1}{k} \eotck(\mu_k, \nu_k)\right| &\leq \frac{1}{k} \otcXk(\hat{\mu}_{k,n}, \mu_k) + \frac{\eta}{k} \left|H(\mu_k) - H(\hat{\mu}_{k,n})\right| \\
&\quad\quad + \frac{1}{k} \otcYk(\hat{\nu}_{k,n}, \nu_k) + \frac{\eta}{k} \left|H(\nu_k) - H(\hat{\nu}_{k,n})\right|.
\end{align*}
\end{prop}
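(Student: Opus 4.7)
The plan is to reduce the claim to Lemma~\ref{lemma:eot_is_lipschitz} via a standard triangle inequality decomposition, and then bridge between the $(c_k, \X^k)$-adapted cost that the lemma produces and the $k$-fold sum $\cXk$ of the single-letter adapted cost that appears in the proposition.

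First, I would recall from \eqref{eq:est_eoj_cost} that $\eojhatk(X_1^n, Y_1^n) = k^{-1} \eotck(\hat{\mu}_{k,n}, \hat{\nu}_{k,n})$, multiply through by $k$, and insert the intermediate measure $\eotck(\mu_k, \hat{\nu}_{k,n})$ to get
\begin{equation*}
\left|\eotck(\hat{\mu}_{k,n}, \hat{\nu}_{k,n}) - \eotck(\mu_k, \nu_k)\right|
\ \leq \
\left|\eotck(\hat{\mu}_{k,n}, \hat{\nu}_{k,n}) - \eotck(\mu_k, \hat{\nu}_{k,n})\right|
+ \left|\eotck(\mu_k, \hat{\nu}_{k,n}) - \eotck(\mu_k, \nu_k)\right|.
\end{equation*}

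Next, I would apply Lemma~\ref{lemma:eot_is_lipschitz} with $\calU = \X^k$, $\calV = \Y^k$, and cost $c_k$ to the first summand, using both orderings of $\hat{\mu}_{k,n}$ and $\mu_k$ to turn the signed bound into an absolute value, giving
\begin{equation*}
\left|\eotck(\hat{\mu}_{k,n}, \hat{\nu}_{k,n}) - \eotck(\mu_k, \hat{\nu}_{k,n})\right|
\ \leq \
\ot_{\ckXk}(\hat{\mu}_{k,n}, \mu_k) + \eta \left|H(\mu_k) - H(\hat{\mu}_{k,n})\right|,
\end{equation*}
and symmetrically bound the second summand by $\ot_{\ckYk}(\hat{\nu}_{k,n}, \nu_k) + \eta|H(\nu_k) - H(\hat{\nu}_{k,n})|$ with $\calU = \Y^k$, $\calV = \X^k$.

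The last ingredient is the pointwise inequality $\ckXk \leq \cXk$ on $\X^k \times \X^k$, which follows from
\begin{equation*}
\ckXk(x_1^k, \tilde{x}_1^k)
\ = \
\sup_{y_1^k \in \Y^k} \left| \sum_{i=1}^k \bigl(c(x_i, y_i) - c(\tilde{x}_i, y_i)\bigr) \right|
\ \leq \
\sum_{i=1}^k \sup_{y \in \Y} |c(x_i, y) - c(\tilde{x}_i, y)|
\ = \
\cXk(x_1^k, \tilde{x}_1^k),
\end{equation*}
and the analogous inequality $\ckYk \leq \cYk$. Since the optimal transport cost is monotone in the cost function, this upgrades $\ot_{\ckXk}(\hat{\mu}_{k,n}, \mu_k) \leq \otcXk(\hat{\mu}_{k,n}, \mu_k)$ and likewise for $Y$. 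Dividing by $k$ yields the stated bound.

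There is no real obstacle: everything is a direct consequence of Lemma~\ref{lemma:eot_is_lipschitz} together with the elementary pointwise comparison between the adapted cost of the $k$-step sum and the sum of the adapted single-letter costs. The only point requiring a moment of care is that the case $\eta = 0$ is covered because Lemma~\ref{lemma:eot_is_lipschitz} was stated for $\eta \geq 0$.
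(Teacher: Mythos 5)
Your proof is correct and matches the paper's approach: the paper itself only remarks that the proposition ``follows from a straightforward application of Lemma~\ref{lemma:eot_is_lipschitz} and the pointwise inequalities $\ckXk \leq \cXk$ and $\ckYk \leq \cYk$,'' which is exactly the triangle-inequality decomposition, two-sided application of the lemma, and cost-monotonicity argument you spell out. Your verification of $\ckXk \leq \cXk$ and the note that the $\eta = 0$ case is already covered by the lemma are both accurate.
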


\subsection{Proofs from Section \ref{sec:estimate_oj}}
In this section, we prove Theorem \ref{thm:consistent_estimation_of_oj} regarding the consistency of the proposed estimates without entropic regularization.

\consistentestimationofoj*
\begin{proof}
We begin by constructing a sequence $\{k(n)\}$ such that the $k(n)$-step empirical optimal transport cost converges to the optimal joining cost almost surely.
As noted by \cite{marton1994entropy}, due to the ergodic theorem, $\mu$ and $\nu$ have admissible sequences $\{\ell(n)\}$ and $\{m(n)\}$.
Using the same reasoning, one may verify that $\{k(n)\}$ where $k(n) = \min\{\ell(n), m(n)\}$ is also admissible for both processes.
Since any admissible sequence is also $c$-admissible, $\{k(n)\}$ is $c$-admissible for both $\mu$ and $\nu$.
In order to simplify notation in the rest of the proof, we suppress the dependence of $k(n)$ on $n$.
For any $n \geq 1$, an application of the triangle inequality gives
\begin{equation}\label{eq:unreg_cost_triangle_ineq}
\left|\ojhatk(X_1^n, Y_1^n) -  \oj(\mu, \nu)\right| \leq \left|\ojhatk(X_1^n, Y_1^n) - \frac{1}{k} \otck\left(\mu_k, \nu_k\right)\right| + \left|\frac{1}{k}\otck\left(\mu_k, \nu_k\right) - \oj(\mu, \nu)\right|.
\end{equation}

\noindent Applying Proposition \ref{prop:eot_is_lipschitz}, we have
\begin{equation*}
\left|\ojhatk(X_1^n, Y_1^n) - \frac{1}{k} \otck(\mu_{k}, \nu_{k})\right| \leq \frac{1}{k} \otcXk(\hat{\mu}_k[X_1^n], \mu_{k}) + \frac{1}{k} \otcYk(\hat{\nu}_k[Y_1^n], \nu_{k}),
\end{equation*}

\noindent which by the $c$-admissibility of $k$ for $\mu$ and $\nu$ implies that the first term on the right hand side in \eqref{eq:unreg_cost_triangle_ineq} goes to zero, almost surely as $n \rightarrow\infty$.
An application of Proposition \ref{prop:oj_is_otcbar} with the fact that $k(n) \rightarrow\infty$ shows that the second term on the right hand side in \eqref{eq:unreg_cost_triangle_ineq} goes to zero.
It follows that
\begin{equation}\label{eq:unreg_cost_consistency}
\left|\ojhatk(X_1^n, Y_1^n) -  \oj(\mu, \nu)\right| \rightarrow 0, \quad \mbox{almost surely}.
\end{equation}

Next we show that the sequence of estimated optimal joinings indexed by $k$ converges weakly to the set of optimal joinings $\Jmin(\mu, \nu)$, almost surely.
Fix sequences $X = X_1, X_2, ...$ and $Y = Y_1, Y_2, ...$ in sets of $\mu$- and $\nu$-measure one on which \eqref{eq:unreg_cost_consistency} holds.
Let $\{\hat{\lambda}^{k, n}\}_{n\geq 1}$ be the corresponding sequence of estimated optimal joinings. 
By Lemma \ref{lemma:weakly_convergent_subsequence} in Appendix \ref{app:weakconvergence}, for any subsequence $\{\hat{\lambda}^{k, n_\ell}\}_{\ell \geq 1}$, there is a further subsequence converging weakly to a joining $\lambda \in \J(\mu, \nu)$.
For ease of notation, we refer to this further subsequence again as $\{\hat{\lambda}^{k, n_\ell}\}_{\ell \geq 1}$.
Then 
\begin{equation*}
\int\limits c \, d\lambda_1 = \lim\limits_{\ell \rightarrow\infty} \int c \, d\hat{\lambda}^{k, n_\ell}_1 =\lim\limits_{\ell \rightarrow\infty} \ojhatk(X_1^{n_\ell}, Y_1^{n_\ell}) = \oj(\mu, \nu),
\end{equation*}

\noindent where the first equality follows from the continuity and boundedness of $c$, the second equality follows from Proposition \ref{prop:entropic_est_is_joining} in Appendix \ref{app:estimate_properties}, and the third equality follows from \eqref{eq:unreg_cost_consistency}.
Thus, $\lambda \in \Jmin(\mu, \nu)$ and since the subsequence was arbitrary, we conclude that $\hat{\lambda}^{k, n_\ell} \Rightarrow \Jmin(\mu, \nu)$.
By the choice of sequences $X$ and $Y$, this convergence occurs almost surely.
\end{proof}

\subsection{Proofs from Section \ref{sec:rates}}
In this section, we prove Theorem \ref{thm:abstract_rates} regarding the expected error of the estimated optimal joining cost $\ojhatkn$.
Our argument may be broken down into three steps. First, we prove a Lipschitz result for the optimal joining cost akin to Lemma \ref{lemma:eot_is_lipschitz} in terms of Ornstein's $\overline{d}$-distance.
Second, we prove a novel upper bound on these $\overline{d}$ terms using the $\phi$-mixing coefficients of the process measures $\mu$ and $\nu$.
Finally, we use a covering number bound to control the error of the estimated $k$-step optimal transport cost.

\paragraph{Bound on Optimal Joining Cost Discrepancy.}
To begin, we establish an inequality for the difference between the entropic optimal joining costs of two pairs of processes akin to the result stated in Lemma \ref{lemma:eot_is_lipschitz}.
Since we will require the bound for the proof of Theorem \ref{thm:consistent_estimation_of_eoj} as well, we prove Lemma \ref{lemma:oj_is_lipschitz} more generally for the regularized optimal joining cost $\eoj(\mu, \nu)$.
Briefly, we remind the reader that the $\overline{d}$-distance between two processes, introduced in \cite{ornstein1973application}, may be defined as the optimal joining cost with respect to the single-letter Hamming metric $(\bfu, \bfu') \mapsto \mathbbm{1}(u_1 \neq u'_1)$.
The distance $\overline{d}$ may be thought of as the process analogue to the total variation distance.

\begin{lem}\label{lemma:oj_is_lipschitz}
Let $\alpha, \alpha' \in \calM_s(\X^\bbN)$ be stationary process measures.
Then for any $\eta \geq 0$ and $\beta \in \calM_s(\Y^\bbN)$,
\begin{equation*}
\eoj(\alpha, \beta) - \eoj(\alpha', \beta) \leq \|c\|_\infty \overline{d}(\alpha, \alpha') + \eta (h(\alpha') - h(\alpha)).
\end{equation*}
The analogous bound for stationary process measures in $\calM_s(\Y^\bbN)$ also holds.
\end{lem}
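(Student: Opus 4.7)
The plan is to lift the finite-dimensional Lipschitz inequality in Lemma~\ref{lemma:eot_is_lipschitz} to the process level by applying it at each block length $k$ to the cost $c_k$ and then passing to the limit $k \to \infty$ using Proposition~\ref{prop:convergence_of_eot_to_eoj}. Concretely, for every $k\geq 1$, Lemma~\ref{lemma:eot_is_lipschitz} applied with cost $c_k$ on $\X^k \times \Y^k$ and marginals $\alpha_k, \alpha'_k \in \calM(\X^k)$, $\beta_k \in \calM(\Y^k)$ yields
\begin{equation*}
\eotck(\alpha_k, \beta_k) - \eotck(\alpha'_k, \beta_k) \;\leq\; \ot_{(c_k)_{\X^k}}(\alpha_k, \alpha'_k) + \eta\bigl(H(\alpha'_k) - H(\alpha_k)\bigr).
\end{equation*}
Dividing by $k$ and letting $k \to \infty$, the left-hand side tends to $\eoj(\alpha,\beta) - \eoj(\alpha',\beta)$ by Proposition~\ref{prop:convergence_of_eot_to_eoj}, and the second term on the right converges to $\eta(h(\alpha') - h(\alpha))$ by definition of the entropy rate. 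It remains to control the first term on the right.

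For this, the key observation is a pointwise comparison between the $\X^k$-adapted cost of $c_k$ and the Hamming distance on $\X^k$. Since $c_k(x_1^k, y_1^k) = \sum_{\ell=1}^k c(x_\ell, y_\ell)$, the triangle inequality gives
\begin{equation*}
(c_k)_{\X^k}(x_1^k, \tilde{x}_1^k) \;=\; \sup_{y_1^k \in \Y^k}\Bigl|\sum_{\ell=1}^k \bigl(c(x_\ell, y_\ell) - c(\tilde{x}_\ell, y_\ell)\bigr)\Bigr| \;\leq\; \sum_{\ell=1}^k \cX(x_\ell, \tilde{x}_\ell).
\end{equation*}
Moreover, by the definition of the adapted cost, $\cX(x, \tilde{x}) \leq \|c\|_\infty \mathbbm{1}(x \neq \tilde{x})$, so $(c_k)_{\X^k}(x_1^k, \tilde{x}_1^k) \leq \|c\|_\infty \sum_{\ell=1}^k \mathbbm{1}(x_\ell \neq \tilde{x}_\ell)$. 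Since this upper bound is the $k$-step cumulative Hamming cost, monotonicity of optimal transport in the cost yields $\ot_{(c_k)_{\X^k}}(\alpha_k, \alpha'_k) \leq \|c\|_\infty \, \ot_{d_H^k}(\alpha_k, \alpha'_k)$, where $d_H^k$ denotes the $k$-step cumulative Hamming cost.

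Finally, applying Proposition~\ref{prop:oj_is_otcbar} to the Hamming single-letter cost gives $\lim_{k\to\infty} \frac{1}{k}\ot_{d_H^k}(\alpha_k, \alpha'_k) = \overline{d}(\alpha, \alpha')$, since $\overline{d}$ is precisely the optimal joining cost with respect to the single-letter Hamming metric. Combining these observations and taking the limit superior as $k \to \infty$ in the displayed inequality yields
\begin{equation*}
\eoj(\alpha,\beta) - \eoj(\alpha',\beta) \;\leq\; \|c\|_\infty \, \overline{d}(\alpha, \alpha') + \eta\bigl(h(\alpha') - h(\alpha)\bigr),
\end{equation*}
as desired; the $\calM_s(\Y^\bbN)$ version follows by symmetry. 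The only delicate point is verifying that all three limits (of the LHS, of the adapted-cost term, and of the entropy-rate term) exist so that the inequality survives passage to the limit; this is guaranteed by Proposition~\ref{prop:convergence_of_eot_to_eoj}, Proposition~\ref{prop:oj_is_otcbar}, and subadditivity of the block entropy, respectively.
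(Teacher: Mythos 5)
Your proof is correct and follows essentially the same route as the paper: apply Lemma~\ref{lemma:eot_is_lipschitz} blockwise with cost $c_k$, dominate the adapted cost pointwise by $\|c\|_\infty$ times the $k$-step Hamming cost, divide by $k$, and pass to the limit. One small citation point: since $\alpha,\alpha'$ are only assumed stationary (not ergodic), the limit $\lim_k \frac{1}{k}\ot_{\delta_k}(\alpha_k,\alpha'_k)=\overline{d}(\alpha,\alpha')$ should be justified via Proposition~\ref{prop:convergence_of_eot_to_eoj} with $\eta=0$ applied to the Hamming cost (as the paper does) rather than via Proposition~\ref{prop:oj_is_otcbar}, whose stated hypotheses include ergodicity.
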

\begin{proof}
Fix $k \geq 1$, $\eta \geq 0$, $\alpha$, $\alpha' \in \calM_s(\X^\bbN)$, and $\beta \in \calM_s(\Y^\bbN)$.
Recall that by Lemma \ref{lemma:eot_is_lipschitz}, 
\begin{equation*}
\eotck\left(\alpha_k, \beta_k\right) - \eotck\left(\alpha'_k, \beta_k\right) \leq \ot_{\ckXk}(\alpha_k, \alpha'_k) + \eta (H(\alpha'_k) - H(\alpha_k)).
\end{equation*}
\noindent Let $\delta_k(x_1^k, \tilde{x}_1^k) = \sum_{\ell=1}^k \mathbbm{1}(x_\ell \neq \tilde{x}_\ell)$ be the $k$-step Hamming distance and note that the pointwise upper bound $\ckXk \leq \|c\|_\infty \delta_k$ holds.
Thus,
\begin{equation*}
\eotck\left(\alpha_k, \beta_k\right) - \eotck\left(\alpha'_k, \beta_k\right) \leq \|c\|_\infty \ot_{\delta_k}(\alpha_k, \alpha'_k) + \eta (H(\alpha'_k) - H(\alpha_k)).
\end{equation*}
Dividing by $k$, letting $k \rightarrow \infty$, and applying Proposition \ref{prop:convergence_of_eot_to_eoj}, we find
\begin{equation*}
\eoj(\alpha, \beta) - \eoj(\alpha', \beta) \leq \|c\|_\infty \oj_\delta(\alpha, \alpha') + \eta (h(\alpha') - h(\alpha)).
\end{equation*}
\noindent Recognizing that $\oj_\delta(\alpha, \alpha') = \overline{d}(\alpha, \alpha')$, the result follows.
\end{proof}

\paragraph{Bound on $\overline{d}$.}
Next, we prove an upper bound on the $\overline{d}$-distance between a stationary process measure and an approximation constructed from its finite dimensional distributions.
The approximation of interest is defined as follows:

\begin{defn}[Block approximation with gaps]
Let $\calU$ be a finite space and $k, g \geq 1$.
We define $\tilde{\Lambda}^k: \calM_s(\calU^\bbN) \times \calM(\calU^g) \rightarrow \calM(\calU^\bbN)$ to be the map that takes a process $\gamma \in \calM_s(\calU^\bbN)$ and a probability measure $\alpha \in \calM(\calU^g)$ to the unique probability measure on $\calU^\bbN$ obtained by independently concatenating $\gamma_k$ and $\alpha$ together infinitely many times.
Formally, for any $\ell(k+g)$-dimensional cylinder set $C \subset \calU^\bbN$,
\begin{equation*}
\tilde{\Lambda}^k[\gamma, \alpha](C) = \prod\limits_{i=0}^{\ell-1} \gamma_k(C_{i(k+g)+1}^{i(k+g)+k}) \alpha(C_{i(k+g)+k+1}^{(i+1)(k+g)}).
\end{equation*}
Moreover, we define $\Lambda^k : \calM_s(\calU^\bbN) \times \calM(\calU^g) \rightarrow \calM_s(\calU^\bbN)$ to be the map defined by randomizing the start of the output of $\tilde{\Lambda}^k$ over the first $k+g$ coordinates.
Formally, for any set $U \subset \calU^\bbN$,
\begin{equation*}
\Lambda^k[\gamma, \alpha](U) = \frac{1}{k+g} \sum\limits_{\ell=0}^{k+g-1} \tilde{\Lambda}^k[\gamma, \alpha](\calU^\ell \times U).
\end{equation*}

\noindent We will refer to $\tilde{\Lambda}^k[\gamma, \alpha]$ as the \emph{independent $k$-block process approximation of $\gamma$ with gap $g$} and $\Lambda^k[\gamma, \alpha]$ as the \emph{stationary $k$-block process approximation of $\gamma$ with gap $g$}.
\end{defn}

Note the abuse of notation for the block approximation with gaps compared to the block approximations without gaps.
In the rest of the paper, it will be understood that we omit gaps when $\tilde{\Lambda}^k$ or $\Lambda^k$ take one argument and include gaps when either takes two arguments.
Note also that we omit $\alpha$ when referring to either approximation because our arguments do not depend on the choice of $\alpha$.
For simplicity, we will use the same notation for these approximations regardless of the alphabet of the processes under consideration.
As such, $\Lambda^k[\mu, \alpha]$ and $\Lambda^k[\nu, \beta]$ are well-defined.
We will show later that $\Lambda^k[\mu, \alpha]$ and $\Lambda^k[\nu, \beta]$ arise naturally in the proof of Theorem \ref{thm:abstract_rates}.
In particular, it will be necessary to control the error of these approximations as measured by the $\overline{d}$-distances to $\mu$ and $\nu$, respectively.

\begin{lem}\label{lemma:d_bar_rates}
Let $\calU$ be finite and $\gamma \in \calM_s(\calU^\bbN)$ have $\phi$-mixing coefficient $\phi_\gamma$.
Then for every $k \geq 1$, $g \geq 0$ and $\alpha \in \calM(\calU^g)$,
\begin{equation*}
\overline{d}(\gamma, \Lambda^k[\gamma, \alpha]) \leq \frac{g}{k+g} + \frac{k}{k+g} \phi_\gamma(g+1).
\end{equation*}
\end{lem}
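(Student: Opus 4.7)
The plan is to exhibit a stationary joining $\lambda$ of $\gamma$ and $\Lambda^k[\gamma, \alpha]$ whose expected Hamming cost $\int \delta \, d\lambda_1$, where $\delta(u, u') := \1(u \neq u')$, attains the claimed bound. Since $\overline{d}$ equals the optimal joining cost for $\delta$, this will suffice. I would work with bi-infinite extensions of both processes on $\calU^{\mathbb{Z}}$ (stationary for $\gamma$, $(k+g)$-stationary for $\tilde{\Lambda}^k[\gamma, \alpha]$) and view them as consecutive $(k+g)$-blocks, each comprising a $k$-block followed by a $g$-gap. Writing $U_i^{(k)}, U_i^{(g)}$ for the blocks of $\gamma$ and $\tilde U_i^{(k)}, \tilde U_i^{(g)}$ for those of $\tilde{\Lambda}^k[\gamma, \alpha]$ (with $i \in \mathbb{Z}$), I would draw the $\tilde U_i^{(g)}$ i.i.d.\ from $\alpha$ independent of everything, and couple each $U_i^{(k)}$ with $\tilde U_i^{(k)}$ so that they agree with probability at least $1 - \phi_\gamma(g+1)$.

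The core step is this latter coupling. Let $\mathcal{H}_{i-1}$ denote the history of $\gamma$ up through $U_{i-1}^{(k)}$ but excluding the immediately preceding gap $U_{i-1}^{(g)}$. Since $\mathcal{H}_{i-1}$ occupies positions ending exactly $g$ coordinates before $U_i^{(k)}$ begins, the definition of $\phi$-mixing yields
\[
\|\bbP(U_i^{(k)} \in \cdot \mid \mathcal{H}_{i-1}) - \gamma_k\|_{\tv} \leq \phi_\gamma(g+1)
\]
almost surely. Using independent auxiliary uniforms $\xi_i$, a maximal coupling conditional on $\mathcal{H}_{i-1}$ then produces $\tilde U_i^{(k)}$ with $\bbP(\tilde U_i^{(k)} \in \cdot \mid \mathcal{H}_{i-1}) = \gamma_k$ and $\bbP(\tilde U_i^{(k)} = U_i^{(k)} \mid \mathcal{H}_{i-1}) \geq 1 - \phi_\gamma(g+1)$. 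The main obstacle is then verifying that the $\tilde U_i^{(k)}$'s are \emph{unconditionally} i.i.d.\ $\gamma_k$, as required by $\tilde{\Lambda}^k$: a quick induction on $i$ handles this, since for $j<i$ the variable $\tilde U_j^{(k)}$ is measurable with respect to $\mathcal{H}_{i-1} \vee \sigma(\xi_j)$, while the conditional law of $\tilde U_i^{(k)}$ given $\mathcal{H}_{i-1}$ is $\gamma_k$, independent of all earlier $\xi_j$.

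Because the coupling rule depends only on the local structure of $\gamma$, the resulting joint process is $(k+g)$-stationary on $(\calU^2)^{\mathbb{Z}}$; averaging over a uniform start $S \in \{0, 1, \ldots, k+g-1\}$ and restricting to $\calU^{\bbN}$ then yields a stationary joining $\lambda$ of $\gamma$ with $\Lambda^k[\gamma, \alpha]$. By this $(k+g)$-stationarity, $\int \delta \, d\lambda_1$ equals the per-coordinate average of expected mismatches within a single $(k+g)$-block: the $k$ coordinates inside a $k$-block disagree only on the event $\{\tilde U_i^{(k)} \neq U_i^{(k)}\}$, contributing at most $k \phi_\gamma(g+1)$ in total, while each of the $g$ gap coordinates contributes at most $1$. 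Dividing by $k+g$ gives exactly the claimed bound.
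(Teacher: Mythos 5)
Your proof is correct, and it takes a genuinely different route from the paper's. The paper splits $\overline{d}(\gamma, \Lambda^k[\gamma,\alpha])$ by the triangle inequality through an intermediate process $\zeta$ (which keeps the dependent $k$-blocks of $\gamma$ but replaces the gaps by draws from $\alpha$), bounds the first leg by $\frac{g}{k+g}$ with an explicit coupling, and then handles $\overline{d}(\zeta,\Lambda^k[\gamma,\alpha])$ by a telescoping chain of hybrid measures $\rho^0,\dots,\rho^L$ that de-correlate one $k$-block at a time, bounding each finite-dimensional transport step by $k\phi_\gamma(g+1)$ and passing to the limit $L\to\infty$ via Proposition \ref{prop:oj_is_otcbar}. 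You instead build a \emph{single} explicit stationary joining by sequential maximal coupling: after each gap of length $g$, the $\phi$-mixing bound $\sup_B |\gamma(\calU^g\times B\mid A)-\gamma(B)|\le \phi_\gamma(g+1)$ lets you couple the next $k$-block of $\gamma$ with a fresh $\gamma_k$-draw so that they agree except on an event of probability $\phi_\gamma(g+1)$, and the per-period cost accounting gives the bound in one shot. Both arguments use $\phi$-mixing in exactly the same place and both concede all $g$ gap coordinates; what your version buys is directness (no appeal to Proposition \ref{prop:oj_is_otcbar}, no limit over $L$), at the price of having to verify that the constructed blocks $\tilde U_i^{(k)}$ are unconditionally i.i.d.\ $\gamma_k$ and independent of the gap draws --- which your measurability/conditional-law induction does correctly. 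Two routine points you should make explicit in a full write-up: the paper's $\phi$-mixing coefficient is defined via finite cylinder pasts $A\subset\calU^\ell$, so conditioning on the full (bi-infinite) past $\mathcal{H}_{i-1}$ requires a martingale-convergence step to transfer the total-variation bound (or simply work one-sided, where the past is a finite cylinder of positive probability); and the ``induction on $i$'' over $\mathbb{Z}$ is better phrased as peeling off the largest index from any finite collection $i_1<\cdots<i_m$. Neither affects the validity of the argument.
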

\begin{proof}
Fix $k \geq 1$, $g \geq 0$, $\alpha \in \calM(\calU^g)$.
To simplify notation, let $\tilde{\xi} = \tilde{\Lambda}^k[\gamma, \alpha]$ and $\xi = \Lambda^k[\gamma, \alpha]$.
We begin by defining an intermediate process $\zeta \in \calM_s(\calU^\bbN)$.
Let $\tilde{\zeta} \in \calM(\calU^\bbN)$ be the probability measure corresponding to the distribution of the process $V = V_1, V_2, \dots$ generated by drawing a sequence $U = U_1, U_2, ...$ according to $\gamma$ and replacing the $g$-blocks of $U$ with independent draws $(G_1^i, ..., G_g^i)$ from $\alpha$ to obtain
\begin{equation}
\label{eq:Utildeseq}
V = \underbrace{U_1, \dots, U_k}_{k-\mbox{\scriptsize block 1}}, \underbrace{G_1^1, \dots, G_g^1}_{g-\mbox{\scriptsize block 1}}, \underbrace{U_{k+g+1}, \dots, U_{2k+g}}_{k-\mbox{\scriptsize block 2}}, \underbrace{G_1^2, \dots, G_g^2}_{g-\mbox{\scriptsize block 2}}, \dots.
\end{equation}
Then, let $\zeta \in \calM_s(\calU^\bbN)$ be the stationary process measure obtained by randomizing the start of $\tilde{\zeta}$ over the first $k+g$ coordinates.
By the triangle inequality for $\overline{d}$,
\begin{equation*}
\overline{d}(\gamma, \xi) \leq \overline{d}(\gamma, \zeta) + \overline{d}(\zeta, \xi).
\end{equation*}

\noindent Since the $\overline{d}$-distance is defined as an infimum over joinings, we may upper bound both terms on the right hand side by the expected cost of some suitably chosen joinings of $\gamma$ and $\zeta$, and $\zeta$ and $\xi$, respectively.

We will first bound $\overline{d}(\gamma, \zeta)$ by constructing a coupling of $\gamma$ and $\tilde{\zeta}$ with low expected cost and then randomizing the start to obtain a joining of $\gamma$ and $\zeta$.
Since the $k$-blocks of $\tilde{\zeta}$ are equal in distribution to those of $\gamma$ by construction, we may couple them so that the $k$-blocks are equal with probability one.
Formally, define the coupling $\pi \in \Pi(\gamma, \tilde{\zeta})$ to be the probability measure corresponding to the distribution of the process $(U', V')$ generated by drawing the sequence $U' = U'_1, U'_2, \dots$ according to $\gamma$ and letting $V' = U'_1, \dots, U'_k, G^1_1, \dots G^1_g, U'_{k+g+1}, \dots$ as in \eqref{eq:Utildeseq}, replacing the $g$-blocks of $U'$ with independent draws $(G^i_1, \dots, G^i_g)$ from $\alpha$.
In particular, $U'_\ell = V'_\ell$ with $\pi$-probability one when $\ell = i(k+g)+j$ for some $i \geq 0$ and $j \in \{1, ..., k\}$.
Letting $\lambda \in \J(\gamma, \zeta)$ be the joining obtained by randomizing the start of $\pi$ over the first $k+g$ coordinates, we obtain
\begin{align*}
\overline{d}(\gamma, \zeta) &\leq \int \mathbbm{1}(u \neq v) \, d\lambda_1(u, v) \\
&= \int \left(\frac{1}{k+g} \sum\limits_{\ell=1}^{k+g} \mathbbm{1}(u_\ell \neq v_\ell)\right) \, d\pi_{k+g}(u_1^{k+g}, v_1^{k+g}) \\
&= \int \left(\frac{1}{k+g} \sum\limits_{\ell=k+1}^{k+g} \mathbbm{1}(u_\ell \neq v_\ell)\right) \, d\alpha(u_{k+1}^{k+g}, v_{k+1}^{k+g}) \\
& \leq \frac{g}{k+g}.
\end{align*}
Next we bound $\overline{d}(\zeta, \xi)$.
By Proposition \ref{prop:oj_is_otcbar},
\begin{equation*}
\overline{d}(\zeta, \xi) = \lim_{m\rightarrow\infty} \frac{1}{m} \ot_{\delta_m}\left(\zeta_m, \xi_m\right)
\end{equation*}

\noindent and thus, fixing a subsequence $m(\ell) := \ell(k+g) + k$ for $\ell \in \bbN_0$, we have
\begin{equation}\label{eq:dbar_is_limit_of_tv}
\overline{d}(\zeta, \xi) = \lim_{L\rightarrow\infty} \frac{1}{m(L)} \ot_{\delta_{m(L)}}\left(\zeta_{m(L)}, \xi_{m(L)}\right).
\end{equation}

\noindent It suffices to obtain a bound on 
\begin{equation*}
\frac{1}{m(L)}\ot_{\delta_{m(L)}}\left(\zeta_{m(L)}, \xi_{m(L)}\right)
\end{equation*}

\noindent for fixed $L \in \bbN$ and take a limit as $L\rightarrow \infty$.
Similar to the first bound, we will achieve this by constructing a coupling of $\tilde{\zeta}_{m(L)}$ and $\tilde{\xi}_{m(L)}$ with low expected cost and randomizing the start to obtain a coupling of $\zeta_{m(L)}$ and $\xi_{m(L)}$.
Fix $L \in \bbN$ and recall that both $\tilde{\zeta}$ and $\tilde{\xi}$ are comprised of alternating blocks of size $k$ and $g$, with the difference between the two measures being that the $k$-blocks of $\tilde{\zeta}$ depend upon one another while those of $\tilde{\xi}$ are independent of one another.
In order to obtain the desired bound, we will bridge the gap between $\tilde{\zeta}_{m(L)}$ and $\tilde{\xi}_{m(L)}$ with a series of intermediate process measures $\rho^0, ..., \rho^L \in \calM(\calU^\bbN)$ where the first $\ell+1$ $k$-blocks of $\rho^\ell$ are dependent on one another (as in $\tilde{\zeta}_{m(L)}$) and the rest are independent (as in $\tilde{\xi}_{m(L)}$).
In other words, $\rho^\ell$ describes the distribution of the process
\begin{equation*}
\underbrace{U_1, \dots, U_k}_{k-\mbox{\scriptsize block 1}}, \underbrace{G_1^1, \dots, G_g^1}_{g-\mbox{\scriptsize block}}, \dots, \underbrace{U_{m(\ell)-k+1}, \dots, U_{m(\ell)}}_{k-\mbox{\scriptsize block } \ell+1}, \underbrace{\dots}_{g-\mbox{\scriptsize block}}, \underbrace{\tU_1^1, \dots, \tU_k^1}_{k-\mbox{\scriptsize block } \ell+2}, \underbrace{\dots}_{g-\mbox{\scriptsize block}}, \underbrace{\tU_1^2, \dots, \tU_k^2}_{k-\mbox{\scriptsize block } \ell+3}, \dots
\end{equation*}
where the sequence $U_1, U_2, \dots$ is drawn according to $\gamma$ and for each $i \geq 1$, $(G_1^i, \dots, G_g^i)$ and $(\tU_1^i, ..., \tU_k^i)$ are independent draws from $\alpha$ and $\gamma_k$, respectively.
In other words, $\rho^\ell$ is equal to $\tilde{\zeta}$ on the first $\ell+1$ $(k+g)$-blocks and equal to $\tilde{\xi}$ on the remaining blocks.
Note that $\rho^0 = \tilde{\xi}$ and $\rho^L_{m(L)} = \tilde{\zeta}_{m(L)}$.
Applying the triangle inequality for the optimal transport cost,
\begin{equation}\label{eq:m_dim_ot_triangle}
\ot_{\delta_{m(L)}}\left(\tilde{\zeta}_{m(L)}, \tilde{\xi}_{m(L)}\right) \leq \sum\limits_{\ell=0}^{L-1} \ot_{\delta_{m(L)}}\left(\rho_{m(L)}^\ell, \rho_{m(L)}^{\ell+1}\right).
\end{equation}

In order to bound the terms on the right hand side of \eqref{eq:m_dim_ot_triangle}, we will couple each pair $\rho_{m(L)}^\ell$ and $\rho_{m(L)}^{\ell+1}$ so that they are equal on the first $\ell + 1$ $(k+g)$-blocks, close on the next $k$-block, and equal again on the remaining $k$- and $g$-blocks.
Fix $\ell \in \{0, ..., L-1\}$ and consider the coupling of $\rho^\ell_{m(L)}$ and $\rho^{\ell+1}_{m(L)}$ corresponding to the distribution of the paired process $(U', V') = (U'_1, V'_1)$, $\dots$, $(U'_{m(L)}, V'_{m(L)})$, defined as
\begin{align}
\label{eq:couplingseq}
\begin{split}
U' &=U_1, \dots, U_k, \,\dots\,, \dots, U_{m(\ell)-k+1}, \dots, U_{m(\ell)}, \,\dots,\, \tU'_1, \,\dots, \tU'_k, \,\dots,\, \tU_1^1, \dots, \tU_k^1, \dots    \\
V' &= \underbrace{U_1, \dots, U_k}_{k-\mbox{\scriptsize block 1}}, \!\!\!\underbrace{\dots}_{g-\mbox{\scriptsize block}}\!\!\!, \dots, \underbrace{U_{m(\ell)-k+1}, \dots, U_{m(\ell)}}_{k-\mbox{\scriptsize block } \ell+1}, \!\!\underbrace{\dots}_{g-\mbox{\scriptsize block}}\!\!, \underbrace{\tV'_1, \dots, \tV'_k}_{k-\mbox{\scriptsize block } \ell+2}, \!\!\underbrace{\dots}_{g-\mbox{\scriptsize block}}\!\!\!, \underbrace{\tU_1^1, \dots, \tU_k^1}_{k-\mbox{\scriptsize block } \ell+3}, \dots
\end{split}
\end{align}
with elements defined as follows:
The sequence $U_1, U_2, \cdots$ is drawn according to $\gamma$.
The $g$-blocks of $U'$ and $V'$, which are omitted from \eqref{eq:couplingseq}, are equal and drawn independently according to $\alpha$.
The $k$-blocks $(\tU_1^i, \dots, \tU_k^i)$ are drawn independently of one another according to the $k$-dimensional distribution $\gamma_k$ of $\gamma$.
And the paired sequence $(\tU'_1, \tV'_1)$, $\dots$, $(\tU'_k, \tV'_k)$ is drawn according to an optimal coupling of $\gamma_k$ and $\gamma_k(\cdot | [U_1^{m(\ell)}]_k)$ with respect to $\delta_k$, where $\gamma_k(\cdot | [U_1^{m(\ell)}]_k)  \in \calM(\calU^k)$ is the distribution of $U_{m(\ell+1)-k+1}, ..., U_{m(\ell+1)}$ conditioned on the previous $k$-blocks $(U_1, \dots, U_k)$, $\dots$, $(U_{m(\ell)-k+1}, \dots,$ $U_{m(\ell)})$. 

It is clear from \eqref{eq:couplingseq} that the coupling $(U', V')$ only incurs a cost at $k$-block $\ell+2$.
By construction, this cost is equal to the optimal transport cost of $\gamma_k$ and $\gamma_k(\cdot | [U_1^{m(\ell)}]_k)$ with respect to $\delta_k$.
Thus
\begin{equation*}
\ot_{\delta_{m(L)}}\left(\rho_{m(L)}^\ell, \rho_{m(L)}^{\ell+1}\right) \leq \bbE\left[\delta_{m(L)}(U', V')\right] = \bbE \left[\ot_{\delta_k}\left(\gamma_k, \gamma_k(\cdot | [U_1^{m(\ell)}]_k)\right)\right].
\end{equation*}
Finally, using the fact that for any $(u_1^k, v_1^k)$, $\delta_k(u_1^k, v_1^k) \leq k \delta(u_1^k, v_1^k)$, we have
\begin{equation*}
\ot_{\delta_{m(L)}}\left(\rho_{m(L)}^\ell, \rho_{m(L)}^{\ell+1}\right) \leq k \bbE \left[\ot_\delta\left(\gamma_k, \gamma_k(\cdot | [U_1^{m(\ell)}]_k)\right)\right] \leq k \max\limits_{u_1^{m(\ell)}} \ot_\delta\left(\gamma_k, \gamma_k(\cdot | [u_1^{m(\ell)}]_k)\right).
\end{equation*}
Finally, using the fact that the optimal transport cost with respect to $\delta$ is equal to the total variation distance, we have
\begin{equation*}
\ot_{\delta_{m(L)}}\left(\rho_{m(L)}^\ell, \rho_{m(L)}^{\ell+1}\right) \leq k \max\limits_{u_1^{m(\ell)}} \max\limits_{A \in \calU^k} \left|\gamma_k(A | [u_1^{m(\ell)}]_k) - \gamma_k(A)\right|.
\end{equation*}
Letting $\phi_\gamma: \bbN \rightarrow \mathbb{R}_+$ be the mixing coefficient of $\gamma$, it follows that
\begin{equation*}
\ot_{\delta_{m(L)}}\left(\rho_{m(L)}^\ell, \rho_{m(L)}^{\ell+1}\right) \leq k \phi_\gamma(g+1).
\end{equation*}
Plugging this result into \eqref{eq:m_dim_ot_triangle},
\begin{equation*}
\ot_{\delta_{m(L)}}\left(\tilde{\zeta}_{m(L)}, \tilde{\xi}_{m(L)}\right) \leq \sum\limits_{\ell=0}^{L-1} k \phi_\gamma(g+1) = Lk \phi_\gamma(g+1).
\end{equation*}

\noindent By randomizing the start of the couplings considered above, one may further establish that 
\begin{equation*}
\ot_{\delta_{m(L)}}\left(\zeta_{m(L)}, \xi_{m(L)}\right) \leq Lk \phi_\gamma(g+1).
\end{equation*}

\noindent Plugging this into \eqref{eq:dbar_is_limit_of_tv} and recalling that $m(L) = L(k+g)+k$, we find that
\begin{equation*}
\overline{d}(\zeta, \xi) = \lim\limits_{L \rightarrow\infty} \frac{Lk}{L(k+g)+k} \phi_\gamma(g+1) = \frac{k}{k+g} \phi_\gamma(g+1).
\end{equation*}

\noindent Combining this and the earlier bound yields the result.
\end{proof}

\paragraph{Bound on the Mean $k$-step Optimal Transport Cost.}
In the final step before proving Theorem \ref{thm:abstract_rates}, we prove an upper bound on the mean optimal transport cost between $\hat{\mu}_{k,n}$ and $\mu_k$ in terms of $\phi_\mu$ (and the analogous result for $\nu$).
In order to do this, we leverage \cite[Proposition 1.7]{boissard2014mean}, stated below as Theorem \ref{thm:rho_mixing_bound}, regarding the expectation of the $p$-Wasserstein distance from an empirical measure to its target measure for stationary, $\rho$-mixing sequences.
We will say that a process measure $\gamma \in \calM_s(\calU^\bbN)$ has $\rho$-mixing coefficient $\rho_\gamma: \bbN_0 \rightarrow \bbR_+$ if $\rho_\gamma(0) = 1$ and for $g > 1$ and any random variable $U = (U_1, U_2, ...): \Omega \rightarrow \calU^\bbN$ distributed according to $\gamma$,
\begin{equation*}
\rho_\gamma(g) := \sup\left\{|\mbox{Corr}(F, G)|: \ell \geq 1, F \in \mathcal{L}^2(\sigma(U_1, ..., U_\ell)), G \in \mathcal{L}^2(\sigma(U_{\ell+g}, ...))\right\},
\end{equation*}

\noindent where for $i \leq j \leq \infty$, $\sigma(U_i, ..., U_j)$ is the smallest sigma field in $(\Omega, \mathcal{B}, \mathbb{P})$ with respect to which $U_i^j$ is measurable and for a sigma field $\mathcal{F} \subset \mathcal{B}$, $\mathcal{L}^2(\mathcal{F})$ is the set of square-integrable, $\mathcal{F}$-measurable random variables.
The result is stated below in a form that is adapted to our notation and the case of $p = 1$.

\begin{ethm}[\cite{boissard2014mean}]\label{thm:rho_mixing_bound}
Let $\gamma \in \mathcal{M}_s(\calU^\bbN)$ be a stationary process measure on a Polish space $\calU$ with metric $d$ and let $\gamma$ have $\rho$-mixing coefficient $\rho_\gamma$.
Define $\chi_n = n^{-2} \sum_{m=0}^n \sum_{g=0}^m \rho_\gamma(g)$ and let $\Delta = \diam(\calU)$.
If $\gamma_1^n := \gamma_1^n[U_1^n] \in \mathcal{M}(\calU)$ is the empirical measure constructed from samples $U_1^n$ drawn according to $\gamma$, then there exists a constant $C < \infty$ such that for any $t \in (0, \nicefrac{\Delta}{4}]$,
\begin{equation*}
\mathbb{E}\ot_d(\gamma_1^n, \gamma_1) \leq C \left(t + \chi_n^{\nicefrac{1}{2}} \int_t^{\frac{1}{4}\Delta} \mathcal{N}(\calU, d, \varepsilon)^{\nicefrac{1}{2}} \, d\varepsilon\right).
\end{equation*}
\end{ethm}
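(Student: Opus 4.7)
The plan is to combine Kantorovich--Rubinstein duality with a Dudley-type entropy chaining argument, where the key ingredient is a second-moment bound for additive functionals of a $\rho$-mixing sequence. First I would write
\[
\ot_d(\gamma_1^n, \gamma_1) \;=\; \sup_{f \in \mathrm{Lip}_1(\calU)} \int f \, d(\gamma_1^n - \gamma_1),
\]
where $\mathrm{Lip}_1(\calU)$ is the class of 1-Lipschitz functions, and (after centering) restrict attention to $f$ with $\|f\|_\infty \leq \Delta/2$. Because of this uniform boundedness and the finite diameter, the problem reduces to bounding the expected supremum of a centered process indexed by a totally bounded metric class, with complexity governed by $\mathcal{N}(\calU, d, \varepsilon)$.

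The second step is the variance estimate that injects the mixing rate. For any bounded $f$ on $\calU$, expanding the variance of $n^{-1}\sum_{i=1}^n f(U_i)$, using stationarity, and applying the definition of the $\rho$-mixing coefficient to each covariance $\mathrm{Cov}(f(U_i),f(U_{i+g}))$ gives a bound of the form
\[
\mathbb{E}\Bigl|\!\int f \, d(\gamma_1^n - \gamma_1)\Bigr|^2
\;\leq\; \frac{4 \|f\|_\infty^2}{n^2} \sum_{m=0}^n \sum_{g=0}^m \rho_\gamma(g)
\;=\; 4 \|f\|_\infty^2 \, \chi_n.
\]
This is precisely the ingredient that lets $\chi_n^{1/2}$ replace the $n^{-1/2}$ familiar from the i.i.d.\ case.

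The core of the proof is then a multi-scale chaining argument. I would pick a dyadic sequence $\varepsilon_j = 2^{-j}(\Delta/4)$, a minimal $\varepsilon_j$-net $N_j$ of $\calU$, and the corresponding nearest-point projection $\pi_j : \calU \to N_j$. For $f \in \mathrm{Lip}_1(\calU)$ the approximations $f_j := f \circ \pi_j$ satisfy $\|f_j - f_{j-1}\|_\infty \leq 3 \varepsilon_{j-1}$, while $\|f - f_J\|_\infty \leq \varepsilon_J$, where $J$ is chosen so that $\varepsilon_J \asymp t$. Telescoping $f = f_0 + \sum_{j=1}^J (f_j - f_{j-1}) + (f - f_J)$, applying Cauchy--Schwarz to the variance bound with a union over the at most $\mathcal{N}(\calU, d, \varepsilon_j)^2$ possible values of $f_j - f_{j-1}$, and summing yields
\[
\mathbb{E} \sup_{f \in \mathrm{Lip}_1} \Bigl| \int (f - f_J) \, d(\gamma_1^n - \gamma_1) \Bigr| \lesssim t,
\qquad
\mathbb{E} \sup_f \sum_{j} \Bigl|\int (f_j - f_{j-1}) \, d(\gamma_1^n - \gamma_1)\Bigr| \lesssim \chi_n^{1/2} \sum_j \varepsilon_{j-1} \mathcal{N}(\calU, d, \varepsilon_j)^{1/2}.
\]
The dyadic sum over $j$ is comparable to $\int_t^{\Delta/4} \mathcal{N}(\calU, d, \varepsilon)^{1/2} \, d\varepsilon$, which combined with the residual $Ct$ gives the stated bound.

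The main obstacle is the chaining step under dependence: in the i.i.d.\ setting one typically uses Bernstein's inequality after union-bounding over the finite nets $N_j$ and $N_{j-1}$, but here only a second-moment (variance) bound is available, so Cauchy--Schwarz must be used delicately. Concretely, for each $j$ one must bound the expected maximum of at most $\mathcal{N}(\calU, d, \varepsilon_j) \mathcal{N}(\calU, d, \varepsilon_{j-1})$ centered random variables by the square root of the sum of their second moments, each of which is $O(\varepsilon_{j-1}^2 \chi_n)$; verifying that this produces only a $\mathcal{N}(\calU, d, \varepsilon_j)^{1/2}$ factor (rather than $\mathcal{N}(\calU, d, \varepsilon_j)$) and that the resulting geometric series telescopes into Dudley's entropy integral is the technical heart of the proof, and is where the interaction between the entropy integral and the mixing-driven $\chi_n^{1/2}$ factor is ultimately fixed.
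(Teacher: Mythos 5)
First, a point of reference: the paper does not prove this statement at all --- it is imported verbatim as \cite[Proposition~1.7]{boissard2014mean} and used as a black box in the proof of Proposition \ref{prop:mean_ot_bound}. So there is no internal proof to compare against, and your attempt has to be judged on its own merits. Your overall architecture (Kantorovich--Rubinstein duality, a second-moment bound on $\int f\,d(\gamma_1^n-\gamma_1)$ extracted from the $\rho$-mixing coefficients, and a multiscale/Dudley decomposition producing $\chi_n^{1/2}\int \mathcal{N}^{1/2}$) is the right skeleton and is faithful to the spirit of the Boissard--Le Gouic argument.

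However, the chaining step as you describe it has a genuine gap, and it sits exactly where you locate the ``technical heart.'' Two problems. (i) The increments $f_j-f_{j-1}=f\circ\pi_j-f\circ\pi_{j-1}$ do \emph{not} range over a finite set of cardinality $\mathcal{N}(\calU,d,\varepsilon_j)^2$: each such increment is constant on the cells of the common refinement of the two nets, but its \emph{values} vary continuously with $f\in\mathrm{Lip}_1$, so a union bound over ``possible values of $f_j-f_{j-1}$'' is not available. Discretizing the values of $f$ on the nets gives a function class of size exponential in $\mathcal{N}(\calU,d,\varepsilon_j)$, which is fatal when only second moments (not exponential moments) are available. (ii) Even granting a finite family of $M_j=\mathcal{N}(\calU,d,\varepsilon_j)\mathcal{N}(\calU,d,\varepsilon_{j-1})$ increments each with second moment $O(\varepsilon_{j-1}^2\chi_n)$, the only tool you have is $\mathbb{E}\max_i|Z_i|\le(\sum_i\mathbb{E}Z_i^2)^{1/2}$, which yields $\varepsilon_{j-1}\chi_n^{1/2}M_j^{1/2}\asymp\varepsilon_{j-1}\chi_n^{1/2}\mathcal{N}(\calU,d,\varepsilon_j)$ --- off by a square root from the claimed $\mathcal{N}^{1/2}$ factor. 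The correct mechanism, which resolves both issues simultaneously, is to kill the supremum over $f$ \emph{before} taking expectations: since $f_j-f_{j-1}$ is constant on each cell $A$ of the partition $P_j$ induced by the nets, with $\|f_j-f_{j-1}\|_\infty\le 3\varepsilon_{j-1}$, one has the deterministic bound
\begin{equation*}
\sup_{f\in\mathrm{Lip}_1}\Bigl|\int (f_j-f_{j-1})\,d(\gamma_1^n-\gamma_1)\Bigr|\;\le\;3\varepsilon_{j-1}\sum_{A\in P_j}\bigl|\gamma_1^n(A)-\gamma_1(A)\bigr|,
\end{equation*}
and then one exploits that the $\rho$-mixing variance bound is proportional to the cell mass, $\operatorname{Var}(\gamma_1^n(A))\le 2\chi_n\,\gamma_1(A)$, so that $\sum_{A}\operatorname{Var}(\gamma_1^n(A))\le 2\chi_n$ and Cauchy--Schwarz over the cells gives $\mathbb{E}\sum_A|\gamma_1^n(A)-\gamma_1(A)|\le |P_j|^{1/2}(2\chi_n)^{1/2}$. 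It is this mass-weighted summation of variances over cells --- not a union bound over increment functions with a uniform variance $\varepsilon_{j-1}^2\chi_n$ --- that produces the $\mathcal{N}(\calU,d,\varepsilon_j)^{1/2}$ factor. Your uniform variance bound $4\|f\|_\infty^2\chi_n$ is correct but discards precisely the $\gamma_1(A)$ weighting that makes the argument close.
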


As we show in the next proposition, we may translate this result into an upper bound on the expectation of the adapted optimal transport costs between $\hat{\mu}_{k,n}$ and $\mu_k$, and $\hat{\nu}_{k,n}$ and $\nu_k$ under a $\phi$-mixing assumption.

\begin{prop}\label{prop:mean_ot_bound}
Let $\mu$ and $\nu$ have $\phi$-mixing coefficients $\phi_\mu$ and $\phi_\nu$, respectively.
Then, there exists a constant $C < \infty$ such that for any $n \geq 1$, $k \in \{1, ..., n\}$, and $t \in (0, \frac{1}{4}\|c\|_\infty]$,
\begin{align*}
&\mathbb{E}\left[\frac{1}{k} \otcXk(\hat{\mu}_{k,n}, \mu_k) + \frac{1}{k} \otcYk(\hat{\nu}_{k,n}, \nu_k)\right] \\
&\quad\quad\leq C \left(t + \left(\frac{1}{n^2} \sum\limits_{g=0}^n (n-g+1) \phi_\mu^{\nicefrac{1}{2}}(g)\right)^{\nicefrac{1}{2}} \int_t^{\frac{1}{4} \|c\|_\infty} \mathcal{N}\left(\X^k, \frac{1}{k} \cXk, \varepsilon\right)^{\nicefrac{1}{2}} \, d\varepsilon \right. \\
&\quad\quad\quad \left.+ \left(\frac{1}{n^2} \sum\limits_{\ell=0}^n (n-g+1) \phi_\nu^{\nicefrac{1}{2}}(g)\right)^{\nicefrac{1}{2}} \int_t^{\frac{1}{4} \|c\|_\infty} \mathcal{N}\left(\Y^k, \frac{1}{k} \cYk, \varepsilon\right)^{\nicefrac{1}{2}} \, d\varepsilon \right).
\end{align*}
\end{prop}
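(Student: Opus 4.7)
The strategy is to apply Theorem \ref{thm:rho_mixing_bound} separately to the $k$-block processes associated with $\mu$ and $\nu$, and then sum the resulting bounds by linearity of expectation. The first observation is that $\hat{\mu}_{k,n}$ is precisely the one-dimensional empirical measure of the stationary process on $\X^k$ whose samples are the sliding blocks $B_i := X_i^{i+k-1}$, $i = 1, \dots, n-k+1$; this block process has one-dimensional marginal $\mu_k$. Moreover, the pseudometric space $(\X^k, \tfrac{1}{k}\cXk)$ has diameter at most $\|c\|_\infty$, since $\cX(x,x') = \sup_y |c(x,y) - c(x',y)| \leq \|c\|_\infty$ and the normalization by $k$ averages this bound across coordinates. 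This matches the integration endpoint $\tfrac{1}{4}\|c\|_\infty$ appearing in the proposition.

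The core step is to bound the $\rho$-mixing coefficients $\rho_B$ of the block process in terms of the $\phi$-mixing coefficients of $\mu$. For a block-lag $g \geq k$, the $\sigma$-algebras $\sigma(B_1, \dots, B_\ell)$ and $\sigma(B_{\ell+g}, B_{\ell+g+1}, \dots)$ are contained in $\sigma$-algebras generated by disjoint windows of the underlying $X$-process separated by $g - k + 1$ free variables, so $\phi_B(g) \leq \phi_\mu(g - k + 1)$ and hence $\rho_B(g) \leq 2 \phi_\mu^{1/2}(g-k+1)$ via the standard inequality $\rho \leq 2\sqrt{\phi}$. For $g < k$ the blocks overlap and one uses the trivial bound $\rho_B(g) \leq 1$.

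Plugging these lag-wise bounds into Theorem \ref{thm:rho_mixing_bound} applied with $n-k+1$ samples on $(\X^k, \tfrac{1}{k}\cXk)$ and $\Delta = \|c\|_\infty$, and rewriting the sum defining the mixing factor via the substitution $h = g - k + 1$, yields a bound for $\mathbb{E}[\tfrac{1}{k}\otcXk(\hat{\mu}_{k,n}, \mu_k)]$ of the form $C(t + \chi_\mu^{1/2} \int_t^{\|c\|_\infty/4} \mathcal{N}(\X^k, \tfrac{1}{k}\cXk, \varepsilon)^{1/2}\, d\varepsilon)$, where $\chi_\mu \leq C' \cdot n^{-2}\sum_{g=0}^n (n-g+1) \phi_\mu^{1/2}(g)$. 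Repeating this argument for the $Y$-process on $(\Y^k, \tfrac{1}{k}\cYk)$ yields the analogous bound in terms of $\phi_\nu$, and summing the two by linearity of expectation completes the proof.

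The main technical obstacle lies in the final step above, namely writing the mixing factor for the block process cleanly in terms of $\phi_\mu$ at the original indices (as opposed to indices shifted by $k$). Because the blocks overlap for lags $g < k$, a naive application of the trivial bound $\rho_B(g) \leq 1$ in that regime contributes a term of order $k/n$ to $\chi_\mu$ that must be carefully absorbed into the universal constant $C$, either by a tighter analysis exploiting the specific overlap structure of consecutive blocks or by separately treating small-lag contributions using the baseline $\phi_\mu(0) = 1$ term of the target sum.
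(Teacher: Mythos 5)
Your overall strategy is the same as the paper's: identify $\hat{\mu}_{k,n}$ with the empirical measure of the sliding-block process on $\X^k$, check that $(\X^k, \frac{1}{k}\cXk)$ has diameter at most $\|c\|_\infty$, apply Theorem \ref{thm:rho_mixing_bound} via the inequality $\rho \le 2\phi^{1/2}$, and add the two resulting bounds. All of that matches the paper. The difference --- and the problem --- is the step you yourself flag as ``the main technical obstacle'': relating the mixing coefficients of the block process to those of $\mu$. Your bound $\phi_B(g) \le \phi_\mu(g-k+1)$ for $g \ge k$, combined with the trivial bound $\rho_B(g) \le 1$ for $g < k$, produces a mixing factor containing $n^{-2}\sum_{g=0}^{k-1}(n-g+1) \asymp k/n$, whereas the only unconditional contribution to the target sum is the $g=0$ term $(n+1)/n^2 \asymp 1/n$. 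Since the proposition is asserted uniformly over $k \in \{1,\dots,n\}$ with a constant $C$ independent of $k$ and $n$, a discrepancy of a factor of $k$ cannot be ``absorbed into the universal constant $C$,'' and neither of the two repairs you gesture at is actually carried out. As written, the argument does not close.

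For comparison, the paper settles this step in one line by asserting the cleaner inequality $\phi_{\tilde{\mu}}(g) \le \phi_\mu(g)$ for all $g \ge 0$, with no index shift and no special treatment of small lags, and then feeds $\rho_{\tilde{\mu}}(g) \le 2\phi_{\tilde{\mu}}^{1/2}(g) \le 2\phi_\mu^{1/2}(g)$ directly into Theorem \ref{thm:rho_mixing_bound}. That is precisely the statement your reindexing analysis suggests is delicate for overlapping blocks: conditioning on $B_1,\dots,B_\ell$ reveals $X_1^{\ell+k-1}$, while an event on $B_{\ell+g+1},B_{\ell+g+2},\dots$ already involves $X_{\ell+g+1}$, so for $g < k$ the two windows overlap and no nontrivial bound in terms of $\phi_\mu(g)$ is immediate. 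Your instinct about where the difficulty sits is therefore sound, but to complete the proof you must either establish the paper's claimed inequality for the sliding-block process or give a genuinely tighter treatment of the small-lag contributions; leaving the issue as an acknowledged obstacle leaves the proof incomplete.
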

\begin{proof}
The result follows from two applications of Theorem \ref{thm:rho_mixing_bound} for $\mu$ and $\nu$.
Considering first the case of $\mu$, let $\calU = \X^k$ with pseudo-metric $d = \frac{1}{k} \cXk$.
Moreover, let $\tilde{\mu} \in \calM_s((\X^k)^\bbN)$ be the distribution of the stationary process $(X_1, ..., X_k), (X_2, ..., X_{k+1}), ...$.
Note that the $\rho$- and $\phi$-mixing coefficients of $\tilde{\mu}$ satisfy $\rho_{\tilde{\mu}}(g) \leq 2 \phi_{\tilde{\mu}}^{1/2}(g)$ for every $g \geq 0$ \citep{bradley2005basic}.
One may also easily establish that $\phi_{\tilde{\mu}}(g) \leq \phi_{\mu}(g)$ for every $g \geq 0$.
Then a direct application of Theorem \ref{thm:rho_mixing_bound} to $\tilde{\mu}$ yields
\begin{align*}
\mathbb{E}\left[\frac{1}{k} \otcXk(\hat{\mu}_{k,n}, \mu_k)\right] &\leq C \left( t + \left(\frac{2}{n^2} \sum\limits_{m=0}^n \sum\limits_{g=0}^m \phi_\mu^{\nicefrac{1}{2}}(g)\right)^{\nicefrac{1}{2}} \int_t^{\frac{1}{4} \Delta} \mathcal{N}\left(\X^k, \frac{1}{k} \cXk, \varepsilon\right)^{\nicefrac{1}{2}} \, d\varepsilon\right) \\
&= C \left( t + \left(\frac{2}{n^2} \sum\limits_{g=0}^n (n-g+1) \phi_\mu^{\nicefrac{1}{2}}(g)\right)^{\nicefrac{1}{2}} \int_t^{\frac{1}{4} \Delta} \mathcal{N}\left(\X^k, \frac{1}{k} \cXk, \varepsilon\right)^{\nicefrac{1}{2}} \, d\varepsilon\right), 
\end{align*}

\noindent for some constant $C < \infty$ and any $t \in (0, \frac{1}{4}\Delta]$, where the $n-k+1$ term comes from the fact that there are $n-k+1$ $k$-blocks in the sequence $X_1^n$.
In this case $\Delta = \|c\|_\infty$ and an identical argument for $\nu$ yields the result.
\end{proof}

\paragraph{Proof of Main Results.}
Gathering the results proven above, we may proceed with the proofs of Theorem \ref{thm:abstract_rates} and Corollary \ref{cor:rates}.

\abstractrates*
\begin{proof}
Let $n$, $k$ and $g$ be as in the statement of the theorem.
By the triangle inequality, 
\begin{equation*}
\left|\ojhatk(X_1^n, Y_1^n) - \oj(\mu, \nu)\right| \leq \left|\ojhatk(X_1^n, Y_1^n) - \frac{1}{k} \otck(\mu_k, \nu_k)\right| + \left|\frac{1}{k} \otck(\mu_k, \nu_k) - \oj(\mu, \nu)\right|.
\end{equation*}
We begin by establishing an upper bound on $\left|\frac{1}{k} \otck(\mu_k, \nu_k) - \oj(\mu, \nu)\right|$.
Note by Proposition \ref{prop:oj_is_otcbar} that $\frac{1}{k} \otck(\mu_k, \nu_k) \leq \oj(\mu, \nu)$, so it suffices to upper bound $\oj(\mu, \nu) - \frac{1}{k} \otck(\mu_k, \nu_k)$.
Let $\pi \in \Pi(\mu_k, \nu_k)$ achieve the minimum in the problem $\otck(\mu_k, \nu_k)$ and let $\gamma \in \calM(\X^g \times \Y^g)$ be a probability measure with marginals $\alpha \in \calM(\X^g)$ and $\beta \in \calM(\Y^g)$.
Finally, let $\lambda^{k,g} \in \calM_s(\X^\bbN \times \Y^\bbN)$ be the stationary process measure satisfying $\lambda^{k,g} = \Lambda^{k+g}[\pi \otimes \gamma]$.
In other words, $\lambda^{k,g}$ is obtained by independently concatenating $\pi$ and $\gamma$ infinitely many times and randomizing the start over the first $k+g$ coordinates.
Note that $\lambda^{k,g} \in \J(\Lambda^k[\mu, \alpha], \Lambda^k[\nu, \beta])$.
Then by the construction of $\lambda^{k,g}$,
\begin{align*}
\oj\left(\Lambda^k[\mu, \alpha], \Lambda^k[\nu, \beta]\right) \leq \int c \, d\lambda^{k,g} = \frac{1}{k+g} \left(\int c_k \, d\pi + \int c_g \, d\gamma\right) \leq \frac{1}{k+g} \left(\otck(\mu_k, \nu_k) + g \|c\|_\infty \right).
\end{align*}
Rearranging terms, multiplying by $\nicefrac{(k+g)}{k}$, and adding $\oj(\mu, \nu)$ to both sides, we obtain that
\begin{align}
\oj(\mu, \nu) -\frac{1}{k}\otck(\mu_k, \nu_k) &\leq \oj(\mu, \nu) - \frac{k+g}{k} \oj\left(\Lambda^k[\mu, \alpha], \Lambda^k[\nu, \beta]\right) + \frac{g}{k} \|c\|_\infty \notag \\
& \leq \oj(\mu, \nu) - \oj\left(\Lambda^k[\mu, \alpha], \Lambda^k[\nu, \beta]\right) + \frac{g}{k} \|c\|_\infty, \label{eq:nonneg_oj}
\end{align}
where in \eqref{eq:nonneg_oj} we use the fact that the cost $c$, and thus the optimal joining cost, is non-negative.
By Lemmas \ref{lemma:oj_is_lipschitz} and \ref{lemma:d_bar_rates}, we see that
\begin{align*}
\oj(\mu, \nu) - \oj\left(\Lambda^k[\mu, \alpha], \Lambda^k[\nu, \beta]\right) & \leq \|c\|_\infty \left(\overline{d}\left(\Lambda^k[\mu, \alpha], \mu\right) + \overline{d}\left(\Lambda^k[\nu, \beta], \nu\right)\right) \\
&\leq  \|c\|_\infty \left(\frac{2g}{k+g} + \frac{k}{k+g}(\phi_\mu(g+1) + \phi_\nu(g+1))\right) \\
&\leq  \|c\|_\infty \left(\frac{2g}{k} + \frac{k}{k+g}(\phi_\mu(g+1) + \phi_\nu(g+1))\right).
\end{align*}
It follows that
\begin{equation*}
\left|\frac{1}{k} \otck(\mu_k, \nu_k) - \oj(\mu, \nu)\right| \leq \|c\|_\infty \left(\frac{k (\phi_\mu(g+1) + \phi_\nu(g+1))}{k+g} + \frac{3g}{k}\right).
\end{equation*}
In order to bound the other term, we apply Proposition \ref{prop:eot_is_lipschitz} to obtain the bound
\begin{equation*}
\left|\ojhatk(X_1^n, Y_1^n) - \frac{1}{k} \otck(\mu_k, \nu_k)\right| \leq \frac{1}{k} \otcXk(\hat{\mu}_{k,n}, \mu_k) + \frac{1}{k} \otcYk(\hat{\nu}_{k,n}, \nu_k).
\end{equation*}
Combining the bounds proven above, taking an expectation, and applying Proposition \ref{prop:mean_ot_bound}, we obtain the result.
\end{proof}

\rates*
\begin{proof}
The summability condition for $\phi_\mu$ implies that $n^{-2} \sum_{\ell=0}^n (n-\ell+1) \phi_\mu^{\nicefrac{1}{2}}(\ell) = \mathcal{O}(n^{p-2})$ and thus $(n^{-2} \sum_{\ell=0}^n (n-\ell+1) \phi_\mu^{\nicefrac{1}{2}}(\ell))^{\nicefrac{1}{2}} = \mathcal{O}(n^{\nicefrac{p}{2}-1})$.
Moreover, for every $\varepsilon \in (0, \frac{1}{4} \|c\|_\infty]$, we have $\mathcal{N}(\X^k, \frac{1}{k} \cXk, \varepsilon) \leq |\X|^k$.
Thus for large enough $n$, there is a constant $C < \infty$ such that 
\begin{equation*}
u_t(k, n) \leq C n^{\nicefrac{p}{2}-1} \left(\frac{1}{4} \|c\|_\infty - t\right) |\X|^{\nicefrac{k}{2}} \leq \frac{C |\X|^{\nicefrac{k}{2}}}{n^{1-\nicefrac{p}{2}}}.
\end{equation*}
Using the same line of reasoning, one may prove the analogous bound for $v_t(k, n)$.
Plugging these bounds into Theorem \ref{thm:abstract_rates}, we obtain the result by letting $t \rightarrow 0$.
\end{proof}

\subsection{Proofs from Section \ref{sec:entropic_oj}}
\label{sec:proofs_eoj}
In this section, we prove the results stated in Section \ref{sec:entropic_oj}.
We begin with Lemma \ref{lemma:subadditivity_of_eot}, which states that the limit in Proposition \ref{prop:convergence_of_eot_to_eoj} exists and is equal to a supremum.

\begin{lem}\label{lemma:subadditivity_of_eot}
For any $\eta \geq 0$,
\begin{equation*}
\lim\limits_{k\rightarrow\infty} \frac{1}{k} \eotck(\mu_k, \nu_k) = \sup\limits_{k\geq 1} \frac{1}{k} \eotck(\mu_k, \nu_k).
\end{equation*}

\end{lem}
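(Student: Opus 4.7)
The plan is a direct application of Fekete's superadditive lemma to the sequence $a_k := \eotck(\mu_k, \nu_k)$. Since $\X$ and $\Y$ are finite, the set $\Pi(\mu_k, \nu_k)$ is compact and the functional $\pi \mapsto \int c_k \, d\pi - \eta H(\pi)$ is continuous on it (for every $\eta \geq 0$), so the infimum defining $a_k$ is attained at some $\pi^\ast \in \Pi(\mu_{k+\ell}, \nu_{k+\ell})$.

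The key step is to show superadditivity, $a_{k+\ell} \geq a_k + a_\ell$. Let $\pi^\ast_1$ denote the marginal of $\pi^\ast$ on the first $k$ coordinates and $\pi^\ast_2$ its marginal on the last $\ell$ coordinates. By the stationarity of $\mu$ and $\nu$, the marginals of $\mu_{k+\ell}$ on these coordinate blocks coincide with $\mu_k$ and $\mu_\ell$ respectively, and similarly for $\nu$; hence $\pi^\ast_1 \in \Pi(\mu_k, \nu_k)$ and $\pi^\ast_2 \in \Pi(\mu_\ell, \nu_\ell)$. The cost decomposes additively as
\begin{equation*}
c_{k+\ell}(x_1^{k+\ell}, y_1^{k+\ell}) = c_k(x_1^k, y_1^k) + c_\ell(x_{k+1}^{k+\ell}, y_{k+1}^{k+\ell}),
\end{equation*}
so that $\int c_{k+\ell}\, d\pi^\ast = \int c_k \, d\pi^\ast_1 + \int c_\ell \, d\pi^\ast_2$. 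On the entropy side, Shannon entropy is subadditive under passage to marginals, giving $H(\pi^\ast) \leq H(\pi^\ast_1) + H(\pi^\ast_2)$ and hence $-\eta H(\pi^\ast) \geq -\eta H(\pi^\ast_1) - \eta H(\pi^\ast_2)$. Combining these,
\begin{equation*}
a_{k+\ell} = \int c_{k+\ell} \, d\pi^\ast - \eta H(\pi^\ast) \geq \left( \int c_k \, d\pi^\ast_1 - \eta H(\pi^\ast_1) \right) + \left( \int c_\ell \, d\pi^\ast_2 - \eta H(\pi^\ast_2) \right) \geq a_k + a_\ell.
\end{equation*}

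Finally, using the independent coupling $\mu_k \otimes \nu_k$ (whose entropy is non-negative) shows that $a_k \leq k \|c\|_\infty$, so $a_k/k$ is bounded above. Fekete's superadditive lemma then yields $\lim_{k\to\infty} a_k/k = \sup_{k \geq 1} a_k/k$, which is the claim. The only nontrivial ingredient is the superadditivity, and in particular the combination of additivity of $c_{k+\ell}$ over coordinate blocks with subadditivity of Shannon entropy; once the entropy term is handled correctly the argument proceeds in parallel to the standard unregularized case.
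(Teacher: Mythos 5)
Your proof is correct and follows essentially the same route as the paper: Fekete's lemma reduced to superadditivity of $k \mapsto \eotck(\mu_k,\nu_k)$, established by restricting an optimal coupling of $\mu_{k+\ell}$ and $\nu_{k+\ell}$ to its first-$k$ and last-$\ell$ marginals, invoking stationarity to identify those marginals as couplings of $\mu_k,\nu_k$ and $\mu_\ell,\nu_\ell$, and combining additivity of $c_{k+\ell}$ with subadditivity of Shannon entropy. The added remark that $a_k/k$ is bounded above (via the independent coupling) is a harmless extra that guarantees finiteness of the supremum.
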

\begin{proof}
By Fekete's lemma, it suffices to show that the sequence $\{\eotck(\mu_k, \nu_k)\}_{k \geq 1}$ is superadditive.
Fix $k, \ell \geq 1$ and let $\pi \in \Pi(\mu_{k+\ell}, \nu_{k+\ell})$ be a solution to $\eot_{c_{k+\ell}}(\mu_{k+\ell}, \nu_{k+\ell})$.
Let $\pi_k \in \calM(\X^k \times \Y^k)$ and $\pi_\ell \in \calM(\X^\ell \times \Y^\ell)$ be the measures corresponding to the first $k$ coordinates and last $\ell$ coordinates of $\pi$, respectively.
Using the stationarity of $\mu$ and $\nu$, it is straightforward to show that $\pi_k \in \Pi(\mu_k, \nu_k)$ and $\pi_\ell \in \Pi(\mu_\ell, \nu_\ell)$.
Moreover, using the subadditivity of $H_{k+\ell}(\cdot)$, 
\begin{align*}
\eot_{c_{k+\ell}}(\mu_{k+\ell}, \nu_{k+\ell}) &= \int c_{k+\ell} \, d\pi - \eta H_{k+\ell}(\pi) \\
&\geq \int c_k \, d\pi_k - \eta H_k(\pi_k) + \int c_\ell \, d\pi_\ell - \eta H_\ell(\pi_\ell) \\
&\geq \eotck(\mu_k, \nu_k) + \eot_{c_\ell}(\mu_\ell, \nu_\ell).
\end{align*}

\noindent So the sequence $\{\eotck(\mu_k, \nu_k)\}_{k \geq 1}$ is superadditive and the conclusion follows.
\end{proof}

\convergenceofeottoeoj*
\begin{proof}
Fix $\varepsilon > 0$ and $\eta \geq 0$ and let $\lambda \in \mathcal{J}(\mu, \nu)$ be a joining of $\mu$ and $\nu$ such that
\begin{equation*}
\int c \, d\lambda_1 - \eta h(\lambda) \leq \eoj(\mu, \nu) + \varepsilon.
\end{equation*}

\noindent Since the $k$-dimensional distribution of $\lambda$, written as $\lambda_k$, satisfies $\lambda_k \in \Pi(\mu_k, \nu_k)$, we have
\begin{equation*}
\eotck(\mu_k, \nu_k) \leq  \int c_k \, d\lambda_k - \eta H(\lambda_k).
\end{equation*}

\noindent As $\lambda$ is stationary and $h(\lambda) \leq \frac{1}{k} H(\lambda_k)$,
\begin{align*}
\eoj(\mu, \nu) + \varepsilon &\geq \int c \, d\lambda_1 - \eta h(\lambda) \\
&= \frac{1}{k} \int c_k \, d\lambda_k - \eta h(\lambda) \\
& \geq \frac{1}{k} \int c_k \, d\lambda_k - \frac{\eta}{k} H(\lambda_k) \\
&\geq \frac{1}{k} \eotck(\mu_k, \nu_k).
\end{align*}

\noindent By Lemma \ref{lemma:subadditivity_of_eot} we may take a limit in $k$ and let $\varepsilon \rightarrow 0$ to establish that
\begin{equation*}
\eoj(\mu, \nu) \geq \lim\limits_{k\rightarrow\infty} \frac{1}{k} \eotck(\mu_k, \nu_k).
\end{equation*}

\noindent Now let $\{\pi^k\}$ be a sequence with $\pi^k \in \Pi(\mu_k, \nu_k)$ such that
\begin{equation*}
\frac{1}{k}\int c_k \, d\pi^k - \frac{\eta}{k} H(\pi_k) \leq \frac{1}{k}\eotck(\mu_k, \nu_k) + \varepsilon_k,
\end{equation*}

\noindent where $\varepsilon_k \rightarrow 0$.
From this sequence, we wish to construct a sequence of joinings converging to a joining of $\mu$ and $\nu$.
For every $k \geq 1$, let $\lambda^k \in \calM_s(\X^\bbN \times \Y^\bbN)$ be the stationary process measure satisfying $\lambda^k = \Lambda^k[\pi^k]$.
We will now show that the $\X^\bbN$- and $\Y^\bbN$-marginals of $\lambda^k$ converge weakly to $\mu$ and $\nu$.
Let $\sigma: \X^\bbN \rightarrow \X^\bbN$ and $\tau: \Y^\bbN \rightarrow \Y^\bbN$ be the left-shift maps on $\X^\bbN$ and $\Y^\bbN$, respectively.
Fix a measurable cylinder set $C = C_1 \times \cdots \times C_m \subset \X^m$ and let $\tilde{C} \subset \XN$ be its extension to $\XN$ such that $\tilde{C} = C \times \X \times \X \cdots$.
Then for $k \geq m$,
\begin{align*}
\lambda^k(\tilde{C} \times \YN) &= \frac{1}{k} \sum\limits_{\ell=0}^{k-1} \tilde{\Lambda}^k[\pi^k](\sigma^{-\ell}\tilde{C} \times \tau^{-\ell}\YN) \\
&= \frac{1}{k}\sum\limits_{\ell=0}^{k-1}\tilde{\Lambda}^k[\pi^k](\sigma^{-\ell}\tilde{C}\times \YN) \\
&= \frac{1}{k}\sum\limits_{\ell=0}^{k-1} \tilde{\Lambda}^k[\mu_k](\sigma^{-\ell}\tilde{C}) \\
&= \frac{k-m+1}{k} \mu_m(C) + \frac{1}{k} \sum\limits_{\ell=1}^{m-1} \mu_{m-\ell}\left(C_1 \times \cdots \times C_{m-\ell}\right)\mu_\ell\left(C_{m-\ell+1} \times \cdots \times C_m\right).
\end{align*}

\noindent Fixing $m$ and taking a limit in $k$, we see that 
\begin{equation*}
\lim\limits_{k\rightarrow\infty} \lambda^k(\tilde{C}\times \YN) = \mu_m(C) = \mu(\tilde{C}).
\end{equation*}

\noindent Thus the $\X^\bbN$-marginal of $\lambda^k$ converges weakly to $\mu$ and one may use a similar argument to show that the $\Y^\bbN$-marginal of $\lambda^k$ converges weakly to $\nu$.
So by Lemma \ref{lemma:weakly_convergent_subsequence}, $\lambda^{k_\ell} \Rightarrow \lambda \in \mathcal{J}(\mu, \nu)$ for some subsequence $\{\lambda^{k_\ell}\}$.
Now for each $\ell \geq 1$, one may show using the definition of $\lambda^{k_\ell}$ that $\int c \, d\lambda^{k_\ell}_1 = \frac{1}{k_\ell} \int c_{k_\ell} \, d\pi^{k_\ell}$ and $h(\lambda^{k_\ell}) = \frac{1}{k_\ell} H(\pi^{k_\ell})$.
Thus, by the upper semicontinuity of $h(\cdot)$ and the continuity and boundedness of $c$,
\begin{align*}
\eoj(\mu, \nu) &\leq \int c \, d\lambda_1 - \eta h(\lambda) \\
&\leq \liminf\limits_{\ell\rightarrow \infty} \left\{\int c \, d\lambda^{k_\ell}_1  - \eta h(\lambda^{k_\ell})\right\} \\
&= \liminf\limits_{\ell\rightarrow \infty} \left\{\frac{1}{k_\ell} \int c_{k_\ell} \, d\pi^{k_\ell}  - \frac{\eta}{k_\ell} H(\pi^{k_\ell})\right\} \\
&\leq \liminf\limits_{\ell\rightarrow\infty}\left\{\frac{1}{k_\ell} \ot^\eta_{c_{k_\ell}}(\mu_{k_\ell}, \nu_{k_\ell}) + \varepsilon_{k_\ell}\right\} \\
&= \lim\limits_{k\rightarrow\infty} \frac{1}{k} \eotck(\mu_k, \nu_k),
\end{align*}

\noindent giving the result.
\end{proof}

\convergenceineta*
\begin{proof}
Let $\{\eta_n\}$ be a sequence of non-negative integers such that $\eta_n \rightarrow 0$ and for every $n \geq 1$ let $\lambda^n \in \Jmin^{\eta_n}(\mu, \nu)$.
As $\J(\mu, \nu)$ is compact in the weak topology, there exists a subsequence of $\{\lambda^n\}$, which we also refer to as $\{\lambda^n\}$, such that $\lambda^n \Rightarrow \lambda$ for some $\lambda \in \J(\mu, \nu)$.
Now let $\lambda^* \in \Jmin(\mu, \nu)$.
Using the feasibility of $\lambda^n$ for $\oj(\mu, \nu)$ and $\lambda^*$ for $\oj^{\eta_n}(\mu, \nu)$, it follows that for every $n \geq 1$,
\begin{equation*}
\int c \, d\lambda^*_1 \leq \int c \, d\lambda^n_1
\end{equation*}

\noindent and
\begin{equation*}
\int c \, d\lambda^n_1 - \eta_n h(\lambda^n) \leq \int c \, d\lambda^*_1 - \eta_n h(\lambda^*).
\end{equation*}

\noindent Rearranging, we obtain
\begin{equation}\label{eq:sandwich_ineq}
0 \leq \int c \, d\lambda^n_1 - \int c \, d\lambda^*_1 \leq \eta_n (h(\lambda^n) - h(\lambda^*)).
\end{equation}

\noindent As $h(\cdot)$ is bounded, we have $\lim_{n\rightarrow \infty} \eta_n (h(\lambda^n) - h(\lambda^*)) = 0$.
Taking limits in \eqref{eq:sandwich_ineq} and using the continuity and boundedness of $c$,
\begin{equation*}
\int c \, d\lambda_1 = \lim\limits_{n\rightarrow\infty} \int c \, d\lambda^n_1 = \int c \, d\lambda^*_1 = \oj(\mu, \nu).
\end{equation*}

\noindent It follows that $\lambda \in \Jmin(\mu, \nu)$ and $\lim_{n\rightarrow\infty} \int c\, d\lambda_1^n = \oj(\mu, \nu)$.
Again using the boundedness of $h(\cdot)$,
\begin{equation*}
\lim\limits_{n\rightarrow\infty} \oj^{\eta_n}(\mu, \nu) = \lim\limits_{n\rightarrow\infty} \left\{\int c \, d\lambda^n_1  - \eta_n h(\lambda^n)\right\} = \lim\limits_{n\rightarrow\infty} \int c \, d\lambda^n_1 = \oj(\mu, \nu).
\end{equation*}

\noindent Since $\{\eta_n\}$ was arbitrary, we obtain the result.
\end{proof}

\consistentestimationofeoj*
\begin{proof}
Fix some $\eta > 0$.
To begin, we would like to construct a sequence $\{k(n)\}$ satisfying
\begin{equation}\label{eq:consistency_of_cost}
\lim\limits_{n\rightarrow\infty} \left|\eojhatk(X_1^n, Y_1^n) - \eoj\left(\mu, \nu\right)\right| = 0, \quad \mbox{almost surely}.
\end{equation}
\noindent Our approach will be similar to the unregularized case with the exception that we also have to control the error in the entropies of the estimates.
In particular, by Proposition \ref{prop:eot_is_lipschitz},
\begin{align*}
\left| \eojhatk(X_1^n, Y_1^n) - \frac{1}{k} \eotck(\mu_k, \nu_k)\right| &\leq \frac{1}{k} \otcXk(\hat{\mu}_{k,n}, \mu_k) + \eta \left|\frac{1}{k}H(\hat{\mu}_{k,n}) - \frac{1}{k} H(\mu_k)\right| \\
&\quad\quad + \frac{1}{k} \otcYk(\hat{\nu}_{k,n}, \nu_k) + \eta\left|\frac{1}{k} H(\hat{\nu}_{k,n}) - \frac{1}{k} H(\nu_k)\right|.
\end{align*}
\noindent So it is necessary to ensure that the entropy error terms also decay to zero along the sequence $\{k(n)\}$ that we construct.
To see that such a sequence exists, fix $\varepsilon > 0$ and $i \in \mathbb{N}$ and recall that since $\hat{\mu}_{i,n} \Rightarrow \mu_i$ and $H(\cdot)$ is weakly continuous, there exists an $n(i) \in \mathbb{N}$ such that
\begin{equation*}
\mu\left(\left|\frac{1}{i} H(\hat{\mu}_{i,n(i)}) - \frac{1}{i} H(\mu_i)\right| > \varepsilon\right) \leq 2^{-i}.
\end{equation*}
\noindent Then by Borel-Cantelli,
\begin{equation*}
\mu\left(\limsup\limits_{i\rightarrow\infty} \left|\frac{1}{i} H(\hat{\mu}_{i,n(i)}) - \frac{1}{i} H(\mu_i)\right| > \varepsilon \right) = 0,
\end{equation*}
and it follows that $\left|\frac{1}{i} H(\hat{\mu}_{i,n(i)}) - \frac{1}{i} H(\mu_i)\right| \rightarrow 0$, $\mu$-almost surely.
Abusing notation somewhat, we obtain a sequence $\{i(n)\}$ by letting $i(n') = \inf_i\{n(i) = n'\}$.
Thus 
\begin{equation*}
\lim_{n\rightarrow\infty}\left|\frac{1}{i(n)} H(\hat{\mu}_{i(n),n}) - \frac{1}{i(n)} H(\mu_{i(n)})\right| \rightarrow 0, \quad\quad \mu\mbox{-almost surely.}
\end{equation*}
Let $\{j(n)\}$ be a sequence constructed in the analogous manner for $\nu$ and let $\{\ell(n)\}$ and $\{m(n)\}$ be admissible sequences for $\mu$ and $\nu$.
Then letting $\{k(n)\}$ be the sequence defined by $k(n) = \min\{$ $i(n),$ $j(n),$ $\ell(n),$ $m(n)\}$, we obtain the desired convergence.

Next we show that the sequence of estimated entropic optimal joinings indexed by $k(n)$ converges weakly to the set of entropic optimal joinings $\Jmin^\eta(\mu, \nu)$, almost surely.
In order to simplify notation, we will suppress the dependence of $k(n)$ on $n$ in the rest of the proof.
Fix sequences $X_1, X_2, ...$ and $Y_1, Y_2, ...$ in the sets of $\mu$- and $\nu$-measure one on which \eqref{eq:consistency_of_cost} holds.
Let $\{\esteojkn\}_{n\geq 1}$ be the corresponding sequence of estimated entropic optimal joinings. 
By Lemma \ref{lemma:weakly_convergent_subsequence} in Appendix \ref{app:weakconvergence}, for any subsequence $\{\esteoj^{k, n_\ell}\}_{\ell \geq 1}$, there is a further subsequence converging weakly to a joining $\lambda \in \J(\mu, \nu)$.
For ease of notation, we refer to this further subsequence again as $\{\esteoj^{k, n_\ell}\}_{\ell \geq 1}$.
Using the upper semicontinuity of $h(\cdot)$ and the continuity and boundedness of $c$, one may establish
\begin{align*}
\int c \, d\lambda_1 - \eta h(\lambda) &\leq \liminf\limits_{\ell \rightarrow\infty} \left\{\int c \, d\esteoj^{k, n_\ell}_1 - \eta h(\esteoj^{k, n_\ell})\right\} \\
&= \liminf\limits_{\ell \rightarrow\infty} \frac{1}{k} \eotck(\mu^{n_\ell}_{k}, \nu^{n_\ell}_{k}) \\
&= \eot(\mu, \nu).
\end{align*}

\noindent Thus, $\lambda \in \Jmin^\eta(\mu, \nu)$ and since the subsequence was arbitrary, we conclude that $\esteojkn \Rightarrow \Jmin^\eta(\mu, \nu)$.
By the choice of sequences $X_1, X_2, ...$ and $Y_1, Y_2, ...$, we conclude that $\esteojkn \Rightarrow \Jmin^\eta(\mu, \nu)$ almost surely.
\end{proof}

Before proving Theorem \ref{thm:eoj_error_bound}, we state a lemma regarding the entropy of the $k$-block empirical measure $\hat{\mu}_{k,n}$.

\begin{restatable}[]{lem}{biasofentropy}
\label{lem:bias_of_entropy}
Suppose that for some $u: \mathbb{N} \times \mathbb{N} \rightarrow \mathbb{R}_+$ with $\lim_{n\rightarrow\infty} u(k,n) = 0$ for every $k \geq 1$, it holds that
\begin{equation*}
\mathbb{E}\|\mu_k - \hat{\mu}_{k,n}\|_1 \leq u(k,n)
\end{equation*}
for large enough $n$.
Then for any $k \geq 1$ and $n$ large enough,
\begin{equation*}
\mathbb{E}|H(\mu_k) - H(\hat{\mu}_{k,n})| \leq u(k,n) \log \left(\frac{|\mathcal{X}|^{3k}}{u(k,n)}\right).
\end{equation*}
\end{restatable}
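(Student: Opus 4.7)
The plan is to reduce the problem to a pointwise inequality between the entropy difference and the total variation distance, and then pass to expectation. The key ingredient is that the function $\phi(x) := -x\log x$, extended by $\phi(0) = 0$, is concave on $[0,\infty)$ with $\phi(0) = 0$, and is therefore subadditive: $\phi(a+b) \leq \phi(a) + \phi(b)$ for $a, b \geq 0$. Consequently, for any $x, y \in [0,1]$ one has $|\phi(x) - \phi(y)| \leq \phi(|x-y|)$.

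Applying this inequality term-by-term to $\mu_k$ and $\hat{\mu}_{k,n}$, both regarded as probability distributions on an alphabet of size $N = |\mathcal{X}|^k$, and then using Jensen's inequality (again by concavity of $\phi$) to bound the sum $\sum_i \phi(|\mu_k(i) - \hat{\mu}_{k,n}(i)|)$ by $N\phi(\delta/N)$, where $\delta := \|\mu_k - \hat{\mu}_{k,n}\|_1$, I would obtain the pointwise estimate
\begin{equation*}
\bigl|H(\mu_k) - H(\hat{\mu}_{k,n})\bigr| \;\leq\; \delta \log\!\Bigl(\frac{|\mathcal{X}|^k}{\delta}\Bigr) \;=\; k\,\delta\,\log|\mathcal{X}| \;+\; \phi(\delta).
\end{equation*}
Taking expectations, the first term is handled directly by the hypothesis $\mathbb{E}[\delta] \leq u(k,n)$. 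For the second, a second application of Jensen gives $\mathbb{E}[\phi(\delta)] \leq \phi(\mathbb{E}[\delta])$; and for $n$ large enough that $u(k,n) \leq 1/e$ (possible since $u(k,n) \to 0$ by hypothesis), $\phi$ is increasing on $[0,\mathbb{E}[\delta]]$, whence $\phi(\mathbb{E}[\delta]) \leq \phi(u(k,n)) = u(k,n)\log(1/u(k,n))$.

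Combining the two bounds yields
\begin{equation*}
\mathbb{E}\bigl|H(\mu_k) - H(\hat{\mu}_{k,n})\bigr| \;\leq\; u(k,n)\bigl[k\log|\mathcal{X}| + \log(1/u(k,n))\bigr] \;\leq\; u(k,n)\log\!\Bigl(\frac{|\mathcal{X}|^{3k}}{u(k,n)}\Bigr),
\end{equation*}
where the last step is the trivial inequality $k\log|\mathcal{X}| \leq 3k\log|\mathcal{X}|$ (so the factor $3$ in the target expression is actually generous slack). The only real subtlety is that $\mathbb{E}[\delta]$ must lie below the maximizer $1/e$ of $\phi$ in order for the second application of Jensen to combine cleanly with the hypothesis; this is precisely what is accommodated by the clause ``for $n$ large enough'' in the lemma statement. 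Everything else is a routine chain of subadditivity, Jensen's inequality, and the hypothesized $L^1$-bound on $\|\mu_k - \hat{\mu}_{k,n}\|_1$.
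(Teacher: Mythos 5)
There is a genuine gap at the very first step. The inequality $|\phi(x) - \phi(y)| \le \phi(|x-y|)$ does \emph{not} hold for all $x, y \in [0,1]$, and it does not follow from subadditivity: subadditivity gives only the one-sided bound $\phi(x) - \phi(y) \le \phi(x-y)$ for $x \ge y$, while the reverse direction can fail. Concretely, with $x = 1$ and $y = 1/e$ one has $|\phi(1) - \phi(1/e)| = 1/e \approx 0.368$ but $\phi(1 - 1/e) \approx 0.290$. The correct statement --- which is how the paper proceeds, via the standard entropy-continuity bound quoted as Lemma B from Cover and Thomas --- requires $|x-y| \le \tfrac12$, and this is exactly why that lemma carries the hypothesis $\|\alpha - \beta\|_1 \le \tfrac12$. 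As a result, your pointwise estimate $|H(\mu_k) - H(\hat{\mu}_{k,n})| \le \delta \log(|\X|^k/\delta)$ can fail on realizations where $\delta = \|\mu_k - \hat{\mu}_{k,n}\|_1$ is large: for $|\X|^k = 2$, $\mu_k = (1,0)$, $\hat{\mu}_{k,n} = (1/e,\, 1-1/e)$, the entropy difference is $\approx 0.658$ while $\delta\log(2/\delta) \approx 0.580$. The clause ``$n$ large enough'' only controls the expectation $\mathbb{E}\delta$, not the worst-case realization, so it cannot rescue the pointwise claim.

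The fix is precisely the step you skipped, and it is where the exponent $3k$ actually comes from --- it is not generous slack. The paper applies the $\delta\log(|\X|^k/\delta)$ bound only on the event $\{\Delta_{k,n} \le \tfrac12\}$, uses the trivial bound $\log|\X|^k$ on the complementary event, and controls $\mathbb{P}(\Delta_{k,n} > \tfrac12) \le 2\,\mathbb{E}\Delta_{k,n} \le 2u(k,n)$ by Markov's inequality; the bad event thus contributes an extra $u(k,n)\log|\X|^{2k}$, which combined with the good-event term $u(k,n)\log(|\X|^k/u(k,n))$ gives exactly $u(k,n)\log(|\X|^{3k}/u(k,n))$. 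Your handling of the expectation on the good event (Jensen for the concave map followed by monotonicity below the maximizer, valid once $u(k,n)$ is small) matches the paper's argument and is fine; only the unconditional pointwise bound needs repair.
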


A proof of Lemma \ref{lem:bias_of_entropy} may be found in Appendix \ref{app:entropy}.

\eojerrorbound*
\begin{proof}
Let $\eta \geq 0$ and fix $k \geq 1$, $g \geq 0$, and $n \geq k$.
By the triangle inequality,
\begin{equation*}
\left|\eojhatk(X_1^n, Y_1^n) - \eoj(\mu, \nu)\right| \leq \underbrace{\left|\eojhatk(X_1^n, Y_1^n) - \frac{1}{k} \eotck(\mu_k, \nu_k)\right|}_{\scriptsize T_1} + \underbrace{\left|\frac{1}{k} \eotck(\mu_k, \nu_k) - \eoj(\mu, \nu)\right|}_{\scriptsize T_2},
\end{equation*}
where $T_1$ and $T_2$ denote the two differences on the right hand side, respectively.
By Proposition \ref{prop:eot_is_lipschitz}, $T_1$ is bounded as
\begin{align*}
T_1 &\leq \frac{1}{k} \otcXk(\hat{\mu}_{k,n}, \mu_k) + \frac{\eta}{k} |H(\mu_k) - H(\hat{\mu}_{k,n})| + \frac{1}{k} \otcYk(\hat{\nu}_{k,n}, \nu_k) + \frac{\eta}{k} |H(\nu_k) - H(\hat{\nu}_{k,n})| \\
&\leq \frac{\|c\|_\infty}{2} (\|\hat{\mu}_{k,n} - \mu_k\|_1 + \|\hat{\nu}_{k,n} - \nu_k\|_1) + \frac{\eta}{k} (|H(\mu_k) - H(\hat{\mu}_{k,n})| + |H(\nu_k) - H(\hat{\nu}_{k,n})|).
\end{align*}
Taking an expectation of both sides, we obtain
\begin{equation*}
\mathbb{E} T_1 \leq \frac{\|c\|_\infty}{2} (\mathbb{E}\|\hat{\mu}_{k,n} - \mu_k\|_1 + \mathbb{E}\|\hat{\nu}_{k,n} - \nu_k\|_1) + \frac{\eta}{k} (\mathbb{E}|H(\mu_k) - H(\hat{\mu}_{k,n})| + \mathbb{E}|H(\nu_k) - H(\hat{\nu}_{k,n})|).
\end{equation*}
As the $\ell_1$ norm is equivalent (up to a constant factor) to the optimal transport cost with respect to the Hamming cost $\delta$, one has $\mathbb{E}\|\hat{\mu}_{k,n} - \mu_k\|_1 = 2\mathbb{E} \ot_\delta(\hat{\mu}_{k,n}, \mu_k)$.
Then one may repeat the arguments in the proof of Proposition \ref{prop:mean_ot_bound} with $\frac{1}{k} \cXk$ replaced by $\delta$ to show that there exists a constant $C < \infty$ such that
\begin{equation*}
\mathbb{E}\|\hat{\mu}_{k,n} - \mu_k\|_1 \leq C \left(t + \left(\frac{1}{n^2} \sum\limits_{\ell=0}^n (n-\ell+1)\phi^{\nicefrac{1}{2}}_\mu(\ell)\right)^{\nicefrac{1}{2}} \int_t^{\nicefrac{1}{4}} \mathcal{N}(\X^k, \delta, \varepsilon)^{\nicefrac{1}{2}} \, d\varepsilon \right),
\end{equation*}
for any $t \in (0, \nicefrac{1}{4}]$.
Using the summability condition for $\phi_\mu$ and the fact that $\mathcal{N}(\X^k, \delta, \varepsilon) \leq |\X|^k$ for every $\varepsilon > 0$, we obtain for large enough $n$,
\begin{equation*}
\mathbb{E}\|\hat{\mu}_{k,n} - \mu_k\|_1 \leq C \left(t + \left(\frac{1}{4} - t\right)\frac{|\X|^{\nicefrac{k}{2}}}{n^{1-\nicefrac{p}{2}}}\right),
\end{equation*}
where we have absorbed any additional constants arising from the summability condition on $\phi_\mu$ into $C$.
Finally, taking the limit $t \rightarrow 0$, we have that for large enough $n$, $\mathbb{E}\|\hat{\mu}_{k,n} - \mu_k\|_1 \leq u(k, n)$ where $u(k, n) = C |\X|^{\nicefrac{k}{2}} n^{\nicefrac{p}{2}-1}$, thereby satisfying the conditions of Lemma \ref{lem:bias_of_entropy}.
Using the same arguments, one may also establish the analogous bound $\mathbb{E}\|\hat{\nu}_{k,n} - \nu_k\|_1 \leq v(k,n)$ where $v(k, n) = C |\Y|^{\nicefrac{k}{2}} n^{\nicefrac{p}{2}-1}$.
Thus, Lemma \ref{lem:bias_of_entropy} implies
\begin{equation*}
\mathbb{E} T_1 \leq u(k, n) \left(\frac{\|c\|_\infty}{2} + \frac{\eta}{k} \log \left(\frac{|\mathcal{X}|^{3k}}{u(k, n)}\right)\right) + v(k, n) \left(\frac{\|c\|_\infty}{2} + \frac{\eta}{k} \log \left(\frac{|\mathcal{Y}|^{3k}}{v(k, n)}\right)\right),
\end{equation*}
for $n$ large enough.

Next we upper bound $T_2$.
Note that according to Lemma \ref{lemma:subadditivity_of_eot} and Proposition \ref{prop:convergence_of_eot_to_eoj}, $k^{-1} \eotck(\mu, \nu) \leq \eoj(\mu, \nu)$.
Thus it suffices to bound $\eoj(\mu, \nu) - k^{-1} \eotck(\mu, \nu)$.
In order to do this, we will approximate an optimal joining of $\mu$ and $\nu$ by an iid block process with gaps.
Let $\pi \in \Pi(\mu_k, \nu_k)$ achieve the minimum in the problem $\eotck(\mu_k, \nu_k)$ and let $\gamma \in \calM(\X^g \times \Y^g)$ be a probability measure with marginals $\alpha \in \calM(\X^g)$ and $\beta \in \calM(\Y^g)$.
Finally, let $\lambda^{k,g} \in \calM_s(\X^\bbN \times \Y^\bbN)$ be the stationary process measure satisfying $\lambda^{k,g} = \Lambda^{k+g}[\pi \otimes \gamma]$.
In other words, $\lambda^{k,g}$ is the stationary process obtained by independently concatenating $\pi \otimes \gamma$ and randomizing the start over the first $k+g$ symbols.
Note that $\lambda^{k,g} \in \J(\Lambda^{k,g}[\mu, \alpha], \Lambda^{k,g}[\nu, \beta])$ and by Lemma \ref{lem:invariance_of_entropy_rate},
\begin{equation*}
h(\lambda^{k,g}) = h(\Lambda^k[\pi \otimes \gamma]) = h(\tilde{\Lambda}^k[\pi \otimes \gamma]) = \frac{1}{k+g} (H(\pi) + H(\gamma)).
\end{equation*}
Then by the construction of $\lambda^{k,g}$,
\begin{align*}
\eoj\left(\Lambda^k[\mu, \alpha], \Lambda^k[\nu, \beta]\right) &\leq \int c \, d\lambda^{k,g} - \eta h(\lambda^{k,g})\\ 
&= \frac{1}{k+g} \left(\int c_k \, d\pi + \int c_g \, d\gamma - \eta H(\pi) - \eta H(\gamma)\right) \\
&\leq \frac{1}{k+g} \left(\eotck(\mu_k, \nu_k) + g \|c\|_\infty \right).
\end{align*}
Rearranging terms, multiplying by $\nicefrac{(k+g)}{k}$, and adding $\eoj(\mu, \nu)$ to both sides, we obtain
\begin{align*}
T_2 &\leq \eoj(\mu, \nu) - \frac{k+g}{k} \eoj\left(\Lambda^k[\mu, \alpha], \Lambda^k[\nu, \beta]\right) + \frac{g}{k} \|c\|_\infty \\
&= \eoj(\mu, \nu) - \eoj\left(\Lambda^k[\mu, \alpha], \Lambda^k[\nu, \beta]\right) - \frac{g}{k} \eoj\left(\Lambda^k[\mu, \alpha], \Lambda^k[\nu, \beta]\right) + \frac{g}{k} \|c\|_\infty.
\end{align*}
By Lemmas \ref{lemma:oj_is_lipschitz} and \ref{lemma:d_bar_rates}, we see that
\begin{align*}
&\eoj(\mu, \nu) - \eoj\left(\Lambda^k[\mu, \alpha], \Lambda^k[\nu, \beta]\right) \\
&\quad \leq \|c\|_\infty \left(\overline{d}\left(\Lambda^k[\mu, \alpha], \mu\right) + \overline{d}\left(\Lambda^k[\nu, \beta], \nu\right)\right) + \eta (h(\mu) - h(\Lambda^k[\mu, \alpha]) + h(\nu) - h(\Lambda^k[\nu, \beta])) \\
&\quad \leq  \|c\|_\infty \left(\frac{2g}{k+g} + \frac{k}{k+g}(\phi_\mu(g+1) + \phi_\nu(g+1))\right) + \eta (h(\mu) - h(\Lambda^k[\mu, \alpha]) + h(\nu) - h(\Lambda^k[\nu, \beta])).
\end{align*}
In order to simplify this bound, note that
\begin{align*}
h(\mu) - h(\Lambda^k[\mu, \alpha]) &= h(\mu) - \frac{1}{k+g} (H(\mu_k) + H(\alpha)) \\
&\leq \frac{1}{k} H(\mu_k) - \frac{1}{k+g} H(\mu_k) \\
&= \frac{g}{k(k+g)} H(\mu_k) \\
&\leq \frac{g \log |\X|^k}{k(k+g)} \\
&= \frac{g \log |\X|}{k+g},
\end{align*}
and similarly $h(\nu) - h(\Lambda^k[\nu, \beta]) \leq \frac{g \log |\Y|}{k+g}$.
Thus,
\begin{align*}
&\eoj(\mu, \nu) - \eoj\left(\Lambda^k[\mu, \alpha], \Lambda^k[\nu, \beta]\right) \\
&\quad \leq  \|c\|_\infty (\phi_\mu(g+1) + \phi_\nu(g+1))\frac{k}{k+g} + (2\|c\|_\infty + \eta(\log |\X| + \log |\Y|)) \frac{g}{k+g}.
\end{align*}
Moreover, using the fact that the independent joining has maximal entropy rate among all joinings with the same marginals, one may establish
\begin{equation*}
-\eoj\left(\Lambda^k[\mu, \alpha], \Lambda^k[\nu, \beta]\right) \leq \eta h(\Lambda^k[\mu, \alpha] \otimes \Lambda^k[\nu, \beta]) = \frac{\eta}{k+g} \left(H(\mu_k) + H(\alpha) + H(\nu_k) + H(\beta)\right).
\end{equation*}
As the measure $\gamma \in \calM(\X^g \times \Y^g)$ and marginals $\alpha \in \calM(\X^g)$ and $\beta \in \calM(\Y^g)$ were arbitrary, there is no loss of generality in assuming that $H(\alpha) = H(\beta) = 0$.
Thus
\begin{align*}
-\eoj\left(\Lambda^k[\mu, \alpha], \Lambda^k[\nu, \beta]\right) &\leq \frac{\eta}{k+g} \left(H(\mu_k) + H(\nu_k)\right) \\
&\leq \frac{\eta}{k+g} \left(\log |\X|^k + \log |\Y|^k\right) \\
&= \frac{\eta k}{k+g} \left(\log |\X| + \log |\Y|\right).
\end{align*}
Combining this and our earlier bounds, we have
\begin{align*}
T_2 &\leq \|c\|_\infty (\phi_\mu(g+1) + \phi_\nu(g+1))\frac{k}{k+g} + (2\|c\|_\infty + \eta(\log |\X| + \log |\Y|)) \frac{g}{k+g} \\
&\quad\quad + \frac{\eta g}{k} \left(\log |\X| + \log |\Y|\right) + \|c\|_\infty \frac{g}{k} \\
&\leq \|c\|_\infty (\phi_\mu(g+1) + \phi_\nu(g+1))\frac{k}{k+g} + (3\|c\|_\infty + 2\eta(\log |\X| + \log |\Y|)) \frac{g}{k}.
\end{align*}
Combining the bounds on $\mathbb{E}T_1$ and $T_2$ proven above, we obtain the result.
\end{proof}

\section*{Acknowledgments}
KO and ABN were supported in part by NSF grants DMS-1613072 and DMS-1613261.
KM gratefully acknowledges the support of NSF CAREER grant DMS-1847144.

\bibliographystyle{abbrv}
\bibliography{references}

\appendix

\section{Properties of Entropy and Entropy Rate}\label{app:entropy}
\begin{lem}\label{lem:invariance_of_entropy_rate}
Let $\calU$ be finite, let $\sigma$ be the left-shift on $\calU^\bbN$, and let $\tilde{\gamma} \in \calM(\calU^\bbN)$ be $n$-stationary, i.e. satisfying $\tilde{\gamma} \circ \sigma^{-n}= \tilde{\gamma}$.
Then if $\gamma = \frac{1}{n} \sum_{\ell=0}^{n-1} \tilde{\gamma} \circ \sigma^{-\ell}$, it holds that $h(\gamma) = h(\tilde{\gamma})$.
\end{lem}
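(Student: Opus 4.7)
The plan is to bound $|H(\gamma_k) - H(\tilde\gamma_k)|$ uniformly in $k$; dividing by $k$ and sending $k\to\infty$ then yields $h(\tilde\gamma)=h(\gamma)$. The starting point is the observation that, from the definition of $\gamma$, the $k$-dimensional marginal decomposes as a uniform mixture
\begin{equation*}
\gamma_k \;=\; \frac{1}{n}\sum_{\ell=0}^{n-1} \tilde\gamma_k^{(\ell)},
\end{equation*}
where $\tilde\gamma_k^{(\ell)}$ denotes the joint distribution of the coordinates $(x_{\ell+1},\ldots,x_{\ell+k})$ under $\tilde\gamma$. The concavity of Shannon entropy together with the standard mixture-entropy inequality (obtained by introducing an auxiliary component-indicator variable) immediately give the sandwich
\begin{equation*}
\frac{1}{n}\sum_{\ell=0}^{n-1} H\bigl(\tilde\gamma_k^{(\ell)}\bigr) \;\le\; H(\gamma_k) \;\le\; \frac{1}{n}\sum_{\ell=0}^{n-1} H\bigl(\tilde\gamma_k^{(\ell)}\bigr) \;+\; \log n.
\end{equation*}

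Next I would compare each $H(\tilde\gamma_k^{(\ell)})$ with $H(\tilde\gamma_k)$. Applying subadditivity of entropy to $H(X_1,\ldots,X_{k+\ell})$ in two different ways, once peeling off the initial block $(X_1,\ldots,X_\ell)$ and once peeling off the terminal block $(X_{k+1},\ldots,X_{k+\ell})$, and using the crude bound $H(Y_1,\ldots,Y_\ell)\le \ell\log|\calU|$ on any $\ell$ coordinates in $\calU$, produces
\begin{equation*}
\bigl|\, H\bigl(\tilde\gamma_k^{(\ell)}\bigr) - H(\tilde\gamma_k)\,\bigr| \;\le\; \ell\log|\calU| \;\le\; n\log|\calU|
\end{equation*}
for every $\ell\in\{0,\ldots,n-1\}$. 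Combined with the previous sandwich, this gives the key estimate
\begin{equation*}
\bigl|\,H(\gamma_k) - H(\tilde\gamma_k)\,\bigr| \;\le\; n\log|\calU| + \log n,
\end{equation*}
whose right-hand side is independent of $k$.

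To conclude, I would invoke that $\gamma$ is stationary (by the randomized-start construction), so $h(\gamma)=\lim_k k^{-1}H(\gamma_k)$ exists by the usual Fekete/subadditivity argument, and then the uniform bound above forces $k^{-1}H(\tilde\gamma_k)$ to converge to the same limit, giving $h(\tilde\gamma)=h(\gamma)$. All three ingredients (the mixture decomposition of $\gamma_k$, the concavity/grouping sandwich for $H$, and subadditivity on $\calU^{k+\ell}$) are standard, so no step is a genuine obstacle; the only mild point worth flagging is that $\tilde\gamma$ need not be stationary, but subadditivity of joint Shannon entropy requires no stationarity, so this causes no difficulty.
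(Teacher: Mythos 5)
Your proof is correct and follows essentially the same route as the paper's: both arguments decompose $\gamma_k$ as the uniform mixture of the shifted window distributions $\tilde\gamma_k^{(\ell)}$ (the paper phrases this as conditioning on a uniform start index $S$, paying $H(S)=\log n$ via the chain rule where you pay $\log n$ via the mixture-entropy inequality), and both control the effect of shifting the window by $\ell$ with a bound of order $\ell\log|\calU|$ (the paper via the chain rule on $H(\tilde X_1^{s+k})$, you via two applications of subadditivity on $\calU^{k+\ell}$). The one minor point in your favor is that your uniform-in-$k$ bound $|H(\gamma_k)-H(\tilde\gamma_k)|\leq n\log|\calU|+\log n$ lets you deduce the existence of $\lim_k k^{-1}H(\tilde\gamma_k)$ from the stationarity of $\gamma$, whereas the paper's proof tacitly assumes this limit exists for the $n$-stationary process $\tilde\gamma$.
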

\begin{proof}
The proof follows the argument outlined in \cite{shields1996ergodic}.
We find it most convenient to adopt the notation of random variables: let $\tilde{X} = \tilde{X}_1, \tilde{X}_2, ...$ be an $n$-stationary process with distribution $\tilde{\gamma}$, let $S$ be uniformly distributed on the set $\{0, ..., n-1\}$, and let $X = X_1, X_2, ...$ be the stationary process such that for any $s \in \{0, ..., n-1\}$, $X | S = s$ is equal in distribution to $\tilde{X}_s, \tilde{X}_{s+1}, ...$.
Note that, by construction, $X$ has distribution equal $\gamma$.
We will assume that $\tilde{X}$, $X$ and $S$ are all defined on a probability space $(\Omega, \mathcal{B}, \mathbb{P})$.

Fix some $k \geq 1$ and note that by the chain rule for entropy, we have
\begin{equation*}
H(X_1^k) - H(X_1^k | S) = H(S) - H(S | X_1^k).
\end{equation*}
This implies that
\begin{equation*}
\left|\frac{1}{k} H(X_1^k) - \frac{1}{k} H(X_1^k| S)\right| = \frac{1}{k} |H(S) - H(S | X_1^k)| \leq \frac{\log n}{k}.
\end{equation*}
Letting $k \rightarrow \infty$, we find
\begin{equation}\label{eq:invariance_of_entropy1}
h(X) = \lim\limits_{k\rightarrow\infty} \frac{1}{k} H(X_1^k) = \lim\limits_{k\rightarrow\infty} \frac{1}{k} H(X_1^k | S).
\end{equation}
Now writing out $H(X_1^k | S)$, 
\begin{equation}\label{eq:invariance_of_entropy2}
H(X_1^k | S) = \sum\limits_{s=0}^{n-1} \mathbb{P}(S = s) H(X_1^k | S = s) = \frac{1}{n} \sum\limits_{s=0}^{n-1} H(X_1^k | S = s) = \frac{1}{n} \sum\limits_{s=0}^{n-1} H(\tilde{X}_{s+1}^{s+k}).
\end{equation}
The rest of the argument will follow by showing that $\lim_{k\rightarrow\infty} \frac{1}{k} H(\tilde{X}_{s+1}^{s+k})$ is independent of $s$ and equal to the entropy rate $h(\tilde{X})$ of the $n$-stationary process $\tilde{X}$.
To see this, note that the chain rule for the entropy implies
\begin{equation*}
H(\tilde{X}_1^{s+k}) = H(\tilde{X}_{s+1}^{s+k}) + H(\tilde{X}_1^s | \tilde{X}_{s+1}^{s+k})
\end{equation*}
for every $s = 0, ..., n-1$.
This implies the bound 
\begin{equation*}
\left| \frac{1}{k} H(\tilde{X}_{s+1}^{s+k}) - \frac{1}{k} H(\tilde{X}_1^{s+k})\right| \leq \frac{\log |\mathcal{X}|^s}{k}.
\end{equation*}
Letting $k \rightarrow\infty$ once again, we have 
\begin{equation*}
\lim\limits_{k\rightarrow\infty} \frac{1}{k} H(\tilde{X}_{s+1}^{s+k}) = \lim\limits_{k\rightarrow\infty} \frac{1}{k} H(\tilde{X}_1^{s+k}) = \lim\limits_{k \rightarrow\infty} \frac{1}{s+k} H(\tilde{X}_1^{s+k}) = h(\tilde{X}).
\end{equation*}
Now, using this fact with \eqref{eq:invariance_of_entropy1} and \eqref{eq:invariance_of_entropy2}, we obtain
\begin{equation*}
h(X) = \frac{1}{n} \sum\limits_{s=0}^{n-1} \lim\limits_{k\rightarrow\infty} \frac{1}{k} H(\tilde{X}_{s+1}^{s+k}) = \frac{1}{n} \sum\limits_{s=0}^{n-1} h(\tilde{X}) = h(\tilde{X}).
\end{equation*}
Translating this result back to the measures $\gamma$ and $\tilde{\gamma}$, we obtain the desired result.
\end{proof}

Next we prove Lemma \ref{lem:bias_of_entropy} regarding the entropy of the $k$-block empirical measure $\hat{\mu}_{k,n}$.
Our proof relies on the following well-known bound in information theory.
A proof may be found in \cite[Theorem 17.3.3]{cover2012elements}.

\begin{elem}\label{lem:entropy_lipschitz}
Let $\alpha$ and $\beta$ be two probability measures on a finite set $\calU$ such that $\|\alpha - \beta\|_1 \leq \frac{1}{2}$.
Then
\begin{equation*}
|H(\alpha) - H(\beta)| \leq \|\alpha - \beta\|_1 \log\left(\frac{|\calU|}{\|\alpha - \beta\|_1}\right).
\end{equation*}
\end{elem}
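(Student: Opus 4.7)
The plan is to reduce the bound to a pointwise statement about the single-variable function $\phi(t) = -t \log t$ (with $\phi(0) = 0$), then sum over the alphabet using concavity. Writing $H(\alpha) - H(\beta) = \sum_{u \in \calU}\big(\phi(\alpha(u)) - \phi(\beta(u))\big)$ and applying the triangle inequality gives
\begin{equation*}
|H(\alpha) - H(\beta)| \ \leq \ \sum_{u \in \calU} \big|\phi(\alpha(u)) - \phi(\beta(u))\big|.
\end{equation*}
Setting $r_u := |\alpha(u) - \beta(u)|$, the hypothesis $\|\alpha - \beta\|_1 \leq \tfrac{1}{2}$ implies $r_u \leq \tfrac{1}{2}$ for every $u$, so the main ingredients I need are (i) a pointwise modulus of continuity $|\phi(x) - \phi(y)| \leq \phi(|x-y|)$ valid whenever $|x-y| \leq \tfrac{1}{2}$, and (ii) a summation step converting $\sum_u \phi(r_u)$ into the target $\|\alpha-\beta\|_1 \log(|\calU|/\|\alpha-\beta\|_1)$.

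For (i), I would prove the modulus of continuity by studying $g(t) := \phi(t + r) - \phi(t)$ on $[0, 1-r]$ for fixed $r \in (0, \tfrac{1}{2}]$. A direct computation gives $g'(t) = \log(t/(t+r)) \leq 0$, so $g$ is non-increasing, and hence $g(t) \in [g(1-r), g(0)] = [-\phi(1-r),\, \phi(r)]$. Thus $|\phi(x+r) - \phi(x)| \leq \max(\phi(r), \phi(1-r))$, and the argument is completed by the elementary inequality $\phi(1-r) \leq \phi(r)$ for $r \in [0, \tfrac{1}{2}]$. This last inequality can be verified by noting that the function $r \mapsto \phi(r) - \phi(1-r)$ vanishes at $r = 0$ and $r = \tfrac{1}{2}$ and is concave on $[0,\tfrac{1}{2}]$ (since its second derivative equals $-1/r + 1/(1-r) < 0$), hence is non-negative throughout.

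For step (ii), I would invoke Jensen's inequality in the form of concavity of $\phi$ on $[0,1]$: writing $\epsilon := \|\alpha-\beta\|_1$ and $n := |\calU|$, concavity gives $\tfrac{1}{n}\sum_u \phi(r_u) \leq \phi(\bar r)$ where $\bar r = \epsilon/n$, so
\begin{equation*}
\sum_{u \in \calU} \phi(r_u) \ \leq \ n\, \phi\!\left(\frac{\epsilon}{n}\right) \ = \ -\epsilon \log\!\left(\frac{\epsilon}{n}\right) \ = \ \epsilon \log\!\left(\frac{n}{\epsilon}\right),
\end{equation*}
which matches the claim exactly. Combining this with (i) yields the lemma.

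The main obstacle is the pointwise bound (i); the summation step is a one-line application of concavity once (i) is in hand. The subtlety in (i) is that $\phi$ is not Lipschitz near $0$, so no crude estimate of $|\phi(x)-\phi(y)|$ suffices — one must exploit both the monotonicity of $g'$ and the comparison $\phi(1-r) \leq \phi(r)$. Everything else is essentially bookkeeping, and the hypothesis $\|\alpha-\beta\|_1 \leq \tfrac{1}{2}$ is used solely to guarantee that each coordinate difference $r_u$ falls in the regime where the modulus of continuity holds.
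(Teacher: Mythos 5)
Your proof is correct, and it is essentially the standard argument behind the result the paper cites (Cover and Thomas, Theorem 17.3.3) rather than proves: the paper gives no proof of its own, only the reference. Both your pointwise bound $|\phi(x)-\phi(y)|\leq\phi(|x-y|)$ for $|x-y|\leq\tfrac{1}{2}$ (via monotonicity of $t\mapsto\phi(t+r)-\phi(t)$ and the comparison $\phi(1-r)\leq\phi(r)$) and the concluding Jensen step $\sum_u\phi(r_u)\leq|\calU|\,\phi(\|\alpha-\beta\|_1/|\calU|)$ are sound, so nothing is missing.
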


\biasofentropy*
\begin{proof}
Fix $k \geq 1$ and $n \geq k$ to be chosen later on, and let $\Delta_{k,n} = \|\mu_k - \hat{\mu}_{k,n}\|_1$.
For a fixed sequence $X_1^n$, Lemma \ref{lem:entropy_lipschitz} gives us
\begin{align*}
|H(\mu_k) - H(\hat{\mu}_{k,n})| &\leq \Delta_{k,n} \log\left(\frac{|\mathcal{X}|^k}{\Delta_{k,n}}\right) \1(\Delta_{k,n} \leq \nicefrac{1}{2}) + (\log |\mathcal{X}|^k) \1(\Delta_{k,n} > \nicefrac{1}{2}) \\
&\leq \Delta_{k,n} \log\left(\frac{|\mathcal{X}|^k}{\Delta_{k,n}}\right) + (\log |\mathcal{X}|^k) \1(\Delta_{k,n} > \nicefrac{1}{2}).
\end{align*}
Taking an expectation, we have
\begin{align*}
\mathbb{E}|H(\mu_k) - H(\hat{\mu}_{k,n})| &\leq \mathbb{E}\left[\Delta_{k,n} \log\left(\frac{|\mathcal{X}|^k}{\Delta_{k,n}}\right)\right] + \mathbb{P}(\Delta_{k,n} > \nicefrac{1}{2}) \log |\mathcal{X}|^k \\
&\leq \mathbb{E}\left[\Delta_{k,n} \log\left(\frac{|\mathcal{X}|^k}{\Delta_{k,n}}\right)\right] + 2 (\mathbb{E}\Delta_{k,n}) \log |\mathcal{X}|^k \\
&\leq \mathbb{E}\left[\Delta_{k,n} \log\left(\frac{|\mathcal{X}|^k}{\Delta_{k,n}}\right)\right] +  u(k,n) \log |\mathcal{X}|^{2k},
\end{align*}
for $n$ large enough.
An application of Jensen's inequality to the first term yields
\begin{equation*}
\mathbb{E}\left[\Delta_{k,n} \log\left(\frac{|\mathcal{X}|^k}{\Delta_{k,n}}\right)\right] \leq (\mathbb{E} \Delta_{k,n}) \log\left(\frac{|\mathcal{X}|^k}{\mathbb{E} \Delta_{k,n}}\right).
\end{equation*}
Since $\mathbb{E}\Delta_{k,n} \leq u(k,n)$ and $\lim_{n \rightarrow\infty} u(k,n) = 0$, for large enough $n$, $\mathbb{E}\Delta_{k,n}$ and $u(k,n)$ are both less than or equal to $\nicefrac{1}{2}$.
Since $x \mapsto x\log(\nicefrac{|\mathcal{X}|^k}{x})$ is increasing on $x \in [0, \nicefrac{1}{2}]$, this implies that
\begin{equation*}
\mathbb{E}\left[\Delta_{k,n} \log\left(\frac{|\mathcal{X}|^k}{\Delta_{k,n}}\right)\right] \leq u(k,n) \log\left(\frac{|\mathcal{X}|^k}{u(k,n)}\right).
\end{equation*}
Combining this with our earlier bound and gathering terms, we obtain the result.
\end{proof}

\section{Properties of the Proposed Estimates}\label{app:estimate_properties}
\begin{prop}
\label{prop:entropic_est_is_joining}
For any $\eta \geq 0$, the proposed estimates satisfy $\esteojkn \in \J(\Lambda^k[\hat{\mu}_{k,n}], \Lambda^k[\hat{\nu}_{k,n}])$ and 
\begin{equation*}
\int c \, d\esteojkn_1 - \eta h(\esteojkn) = \eojhatkn.
\end{equation*}
\end{prop}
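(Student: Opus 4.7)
The plan is to verify the two claims separately, with both following from a careful unpacking of Definition \ref{defn:block_process} and an appeal to Lemma \ref{lem:invariance_of_entropy_rate} for the entropy rate computation.

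First, I would check that $\esteojkn \in \J(\Lambda^k[\hat{\mu}_{k,n}], \Lambda^k[\hat{\nu}_{k,n}])$. Stationarity of $\esteojkn = \Lambda^k[\pi_k]$ is immediate from the construction: randomizing the start of a $k$-stationary measure over the first $k$ coordinates yields a stationary measure (this is the content of the block process construction). For the marginal condition, note that the projection onto $\X^\bbN$ commutes with $\tilde{\Lambda}^k$ because $\tilde{\Lambda}^k$ acts by independent concatenation on cylinder sets (so the $\X^\bbN$-marginal of $\tilde{\Lambda}^k[\pi_k]$ is $\tilde{\Lambda}^k[\hat{\mu}_{k,n}]$ by the fact that $\pi_k \in \Pi(\hat{\mu}_{k,n}, \hat{\nu}_{k,n})$ has $\hat{\mu}_{k,n}$ as its $\X^k$-marginal). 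The projection also commutes with randomization of the start, so the $\X^\bbN$-marginal of $\esteojkn$ is $\Lambda^k[\hat{\mu}_{k,n}]$; the symmetric argument handles the $\Y^\bbN$-marginal.

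Next I would compute the expected cost. By definition, for any set $A \subseteq \X \times \Y$,
\begin{equation*}
\esteojkn_1(A) \;=\; \frac{1}{k}\sum_{\ell=0}^{k-1} \tilde{\Lambda}^k[\pi_k]\bigl((\X\times\Y)^\ell \times A \times (\X\times\Y) \times \cdots\bigr) \;=\; \frac{1}{k}\sum_{\ell=1}^{k} (\pi_k)_\ell(A),
\end{equation*}
where $(\pi_k)_\ell$ denotes the marginal of $\pi_k$ on its $\ell$-th coordinate pair. Integrating $c$ against this and using $c_k(x_1^k,y_1^k) = \sum_{\ell=1}^k c(x_\ell,y_\ell)$ gives
\begin{equation*}
\int c\,d\esteojkn_1 \;=\; \frac{1}{k}\sum_{\ell=1}^k \int c\,d(\pi_k)_\ell \;=\; \frac{1}{k}\int c_k\,d\pi_k.
\end{equation*}

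Finally, the entropy rate calculation. Since $\tilde{\Lambda}^k[\pi_k]$ is $k$-stationary by construction, Lemma \ref{lem:invariance_of_entropy_rate} yields $h(\esteojkn) = h(\tilde{\Lambda}^k[\pi_k])$. Because $\tilde{\Lambda}^k[\pi_k]$ is the law of an iid sequence of $k$-blocks drawn from $\pi_k$, its $mk$-dimensional marginal has entropy $mH(\pi_k)$, so $h(\tilde{\Lambda}^k[\pi_k]) = \lim_{m\to\infty} (mk)^{-1}\cdot mH(\pi_k) = H(\pi_k)/k$. Combining the two displays,
\begin{equation*}
\int c\,d\esteojkn_1 - \eta h(\esteojkn) \;=\; \frac{1}{k}\int c_k\,d\pi_k - \frac{\eta}{k}H(\pi_k) \;=\; \frac{1}{k}\eotck(\hat{\mu}_{k,n},\hat{\nu}_{k,n}) \;=\; \eojhatkn,
\end{equation*}
where the middle equality is by the defining optimality of $\pi_k$ in Step 2'. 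The specialization to $\eta=0$ recovers the unregularized case. The only delicate step is invoking Lemma \ref{lem:invariance_of_entropy_rate} correctly, which is why the block process construction uses start-randomization over exactly $k$ coordinates.
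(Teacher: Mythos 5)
Your proposal is correct and follows essentially the same route as the paper's proof: verify stationarity and the marginals from the block construction, compute the one-dimensional marginal cost as $k^{-1}\int c_k\,d\pi_k$, and use Lemma \ref{lem:invariance_of_entropy_rate} to identify $h(\esteojkn)$ with $H(\pi_k)/k$. You spell out a couple of steps (the marginal identity $\esteojkn_1 = k^{-1}\sum_\ell (\pi_k)_\ell$ and the entropy rate of the iid block process) that the paper leaves implicit, but the argument is the same.
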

\begin{proof}
Let $\pi \in \calM(\X^k \times \Y^k)$ be as defined in Section \ref{sec:entropic_oj}.
We start by proving that $\esteojkn$ is invariant under the left-shift $\sigma \times \tau$ on $\X^\bbN \times \Y^\bbN$.
Note first that by construction, $\tilde{\Lambda}^k[\pi] \circ (\sigma \times \tau)^{-k} = \tilde{\Lambda}^k[\pi]$.
Then,
\begin{align*}
k \esteojkn \circ (\sigma \times \tau)^{-1} &= \sum\limits_{\ell = 0}^{k-1} \tilde{\Lambda}^k[\pi] \circ (\sigma \times \tau)^{-\ell - 1} \\
&= \sum\limits_{\ell = 1}^{k-1} \tilde{\Lambda}^k[\pi] \circ (\sigma \times \tau)^{-\ell} + \tilde{\Lambda}^k[\pi] \circ (\sigma \times \tau)^{-k} \\
&= \sum\limits_{\ell = 1}^{k-1} \tilde{\Lambda}^k[\pi] \circ (\sigma \times \tau)^{-\ell} + \tilde{\Lambda}^k[\pi] \\
&= \sum\limits_{\ell = 0}^{k-1} \tilde{\Lambda}^k[\pi] \circ (\sigma \times \tau)^{-\ell} \\
&= k \esteojkn.
\end{align*}

\noindent Thus $\esteojkn \in \calM_s(\X^\bbN \times \Y^\bbN)$.
Next we prove that $\esteojkn \in \Pi(\Lambda^k[\hat{\mu}_{k,n}], \Lambda^k[\hat{\nu}_{k,n}])$.
Fix a measurable set $C \subset \X^\bbN$.
Then 
\begin{align*}
\esteojkn(C \times \Y^\bbN) &= \frac{1}{k} \sum\limits_{\ell=0}^{k-1} \tilde{\Lambda}^k[\pi](\sigma^{-\ell} C \times \tau^{-\ell} \Y^\bbN) \\
&= \frac{1}{k} \sum\limits_{\ell=0}^{k-1} \tilde{\Lambda}^k[\pi](\sigma^{-\ell} C \times \Y^\bbN) \\
&= \frac{1}{k} \sum\limits_{\ell=0}^{k-1} \tilde{\Lambda}^k[\hat{\mu}_{k,n}](\sigma^{-\ell} C) \\
&= \Lambda^k[\hat{\mu}_{k,n}](C).
\end{align*}

\noindent Since $C$ was arbitary, it follows that the $\X^\bbN$-marginal of $\esteojkn$ is $\Lambda^k[\hat{\mu}_{k,n}]$.
A similar argument will show that the $\Y^\bbN$-marginal of $\esteojkn$ is $\Lambda^k[\hat{\nu}_{k,n}]$.
Thus $\esteojkn \in \J(\Lambda^k[\hat{\mu}_{k,n}], \Lambda^k[\hat{\nu}_{k,n}])$.
Finally, by the construction of $\esteojkn$,
\begin{equation*}
\int c \, d\esteojkn_1 - \eta h(\esteojkn) = \frac{1}{k}\int c_k \, d\pi - \frac{\eta}{k} H(\pi) = \frac{1}{k}\eotck\left(\hat{\mu}_{k,n}, \hat{\nu}_{k,n}\right) = \eojhatkn,
\end{equation*}

\noindent where the first equality follows Lemma \ref{lem:invariance_of_entropy_rate}.
\end{proof}

\section{Existence of an Entropic Optimal Joining}\label{app:existence}
\begin{prop}\label{prop:existence_of_entropic_oj}
Let $\X$ and $\Y$ be finite, $\mu \in \calM_s(\X^\bbN)$, $\nu \in \calM_s(\Y^\bbN)$, and $c: \X \times \Y \rightarrow \bbR_+$ be a non-negative cost function.
Then for every $\eta \geq 0$, the set of entropic optimal joinings $\Jmin^\eta(\mu, \nu)$ is non-empty.
\end{prop}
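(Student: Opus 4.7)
The plan is to apply the direct method of the calculus of variations: show that the objective functional is lower semicontinuous on the compact set $\J(\mu,\nu)$ and conclude that the infimum is attained.

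First, I would let $\{\lambda^n\}_{n\geq 1} \subset \J(\mu,\nu)$ be a minimizing sequence, so that
\[
\int c \, d\lambda^n_1 - \eta h(\lambda^n) \longrightarrow \eoj(\mu,\nu) \quad \text{as } n \to \infty.
\]
By Proposition \ref{prop:properties_of_Jmin}, the set $\J(\mu,\nu)$ is compact in the weak topology, so there exists a subsequence (which I will continue to denote by $\{\lambda^n\}$) and some $\lambda^* \in \J(\mu,\nu)$ with $\lambda^n \Rightarrow \lambda^*$.

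Next, I would verify that the objective $F(\lambda) := \int c \, d\lambda_1 - \eta h(\lambda)$ is lower semicontinuous in the weak topology. The map $\lambda \mapsto \int c \, d\lambda_1$ is weakly continuous, since $c : \X \times \Y \to \bbR_+$ is bounded and continuous (the alphabets are finite and endowed with the discrete topology, so $c$ is automatically continuous) and this integral depends only on the one-dimensional marginal $\lambda_1$, which varies continuously with $\lambda$ in the weak topology. For the entropy rate term, I would invoke the weak upper semicontinuity of $h$ on finite-alphabet sequence spaces, which is noted in Section \ref{sec:entropic_oj} (see also \cite{walters2000introduction}). When $\eta \geq 0$, this immediately yields that $-\eta h(\cdot)$ is weakly lower semicontinuous, so $F$ is weakly lower semicontinuous as a sum of a continuous and a lower semicontinuous function.

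Combining these two observations,
\[
F(\lambda^*) \leq \liminf_{n \to \infty} F(\lambda^n) = \eoj(\mu,\nu),
\]
and since $\lambda^* \in \J(\mu,\nu)$ we also have $F(\lambda^*) \geq \eoj(\mu,\nu)$. Hence $F(\lambda^*) = \eoj(\mu,\nu)$ and $\lambda^* \in \Jmin^\eta(\mu,\nu)$, showing this set is non-empty. There is no genuine obstacle here beyond invoking the right semicontinuity properties; the only mild subtlety is confirming that the entropy rate is upper semicontinuous on $\calM_s(\X^\bbN \times \Y^\bbN)$, which follows from standard results in finite-alphabet symbolic dynamics since $h(\lambda) = \inf_{k\geq 1} \frac{1}{k} H(\lambda_k)$ is an infimum of weakly continuous functions.
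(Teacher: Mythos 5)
Your proposal is correct and follows essentially the same route as the paper's own proof: extract a weakly convergent subsequence from a minimizing sequence using compactness of $\J(\mu,\nu)$, then combine weak continuity of $\lambda \mapsto \int c\, d\lambda_1$ with weak upper semicontinuity of the entropy rate $h$ to conclude the limit is a minimizer. Your added remark that upper semicontinuity of $h$ follows from writing $h(\lambda) = \inf_{k\geq 1}\frac{1}{k}H(\lambda_k)$ as an infimum of weakly continuous functions is a correct justification of the fact the paper cites to the literature.
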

\begin{proof}
Fix $\eta \geq 0$ and a sequence $\{\lambda^n\} \subset \J(\mu, \nu)$ such that $\int c \, d\lambda_1^n - \eta h(\lambda^n) \rightarrow \eoj(\mu, \nu)$.
As $\J(\mu, \nu)$ is compact in the weak topology, we may extract a subsequence $\{\lambda^{n_\ell}\}$ such that $\lambda^{n_\ell} \Rightarrow \lambda$ as $\ell \rightarrow\infty$ for some $\lambda \in \J(\mu, \nu)$.
As the entropy rate $h(\cdot)$ is weakly upper semicontinuous on $\calM_s(\X^\bbN \times \Y^\bbN)$ and $c$ is continuous and bounded,
\begin{equation*}
\int c \, d\lambda_1 - \eta h(\lambda) \leq \liminf\limits_{\ell\rightarrow\infty} \left\{\int c \, d\lambda^{n_\ell}_1 - \eta h(\lambda^{n_\ell})\right\}= \eoj(\mu, \nu).
\end{equation*}

\noindent Thus we conclude that $\lambda \in \Jmin^\eta(\mu, \nu)$ and $\Jmin^\eta(\mu, \nu)$ is non-empty.
\end{proof}

\section{Properties of the $(c, \eta)$-Transform}
\label{sec:transform_properties}
\cetatransformfacts*
\begin{proof}
We will prove the bound for $\tg$ and the bound for $\tf$ will follow from a similar argument.
Let the conditions of the proposition hold.
Then for any $u, u' \in \calU$,
\begin{align*}
\tg(u) &= - \eta \log \left(\sum\limits_v \exp\left\{\frac{1}{\eta} (g(v) - c(u,v))\right\}\right) \\
&= -\eta \log \left(\sum\limits_v \exp\left\{\frac{1}{\eta} (g(v) - c(u',v) + c(u', v) - c(u,v))\right\}\right) \\
&= -\eta \log \left( \sum\limits_v \exp\left\{\frac{1}{\eta} (g(v) - c(u',v))\right\} \exp\left\{\frac{1}{\eta}(c(u', v) - c(u,v))\right\}\right) \\
&\leq -\eta \log \left( \exp\left\{-\frac{M}{\eta} d_\calU(u, u')\right\} \sum\limits_v \exp\left\{\frac{1}{\eta} (g(v) - c(u',v))\right\}\right) \\
&= M d_\calU(u, u') + \tg(u').
\end{align*}
\noindent Applying the same argument after exchanging $u$ and $u'$ and using the symmetry of $d_\calU(\cdot, \cdot)$, the result for $\tg$ follows.
\end{proof}

\section{Weak Convergence of Couplings and Joinings}
\label{app:weakconvergence}
\begin{lem}\label{lemma:weakly_convergent_subsequence}
Let $\calU$ and $\calV$ be Polish spaces and $\{\mu^n\} \subset \calM(\calU)$ and $\{\nu^n\} \subset \calM(\calV)$ be sequences satisfying $\mu^n \Rightarrow \mu$ and $\nu^n \Rightarrow \nu$ for some $\mu \in \calM(\calU)$ and $\nu \in \calM(\calV)$.
Then for any sequence $\{\pi^n\}$ satisfying $\pi^n \in \Pi(\mu^n, \nu^n)$ for every $n \geq 1$, there exists a subsequence $\{\pi^{n_\ell}\}$ such that $\pi^{n_\ell} \Rightarrow \pi$ for some $\pi \in \Pi(\mu, \nu)$.
Moreover, if $\calU = \X^\bbN$ and $\calV = \Y^\bbN$ for Polish alphabets $\X$ and $\Y$ and for every $n \geq 1$, $\mu^n \in \calM_s(\X^\bbN)$, $\nu^n \in \calM_s(\Y^\bbN)$ and $\pi^n \in \J(\mu^n, \nu^n)$, then $\pi \in \J(\mu, \nu)$.
\end{lem}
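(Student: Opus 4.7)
The plan is to establish tightness of $\{\pi^n\}$ on $\calU \times \calV$, extract a weakly convergent subsequence via Prokhorov's theorem, verify that the limit has marginals $\mu$ and $\nu$, and then separately transfer stationarity from the $\pi^n$ to the limit using continuity of the joint shift.

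For tightness, I would use the fact that since $\mu^n \Rightarrow \mu$ on a Polish space, Prokhorov's theorem (in the direction: weak convergence implies tightness) gives tightness of $\{\mu^n\}$, and likewise for $\{\nu^n\}$. Given $\varepsilon > 0$, choose compacts $K_\varepsilon \subset \calU$ and $L_\varepsilon \subset \calV$ with $\mu^n(K_\varepsilon^c) \leq \varepsilon/2$ and $\nu^n(L_\varepsilon^c) \leq \varepsilon/2$ uniformly in $n$. The product $K_\varepsilon \times L_\varepsilon$ is compact in $\calU \times \calV$, and because $\pi^n$ has marginals $\mu^n$ and $\nu^n$, the union bound
\begin{equation*}
\pi^n\bigl((K_\varepsilon \times L_\varepsilon)^c\bigr) \leq \pi^n(K_\varepsilon^c \times \calV) + \pi^n(\calU \times L_\varepsilon^c) = \mu^n(K_\varepsilon^c) + \nu^n(L_\varepsilon^c) \leq \varepsilon
\end{equation*}
shows that $\{\pi^n\}$ is tight. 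Prokhorov then yields a subsequence $\pi^{n_\ell} \Rightarrow \pi$ for some $\pi \in \calM(\calU \times \calV)$.

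Next I would identify the marginals of $\pi$. The coordinate projection $p_\calU: \calU \times \calV \to \calU$ is continuous, so by the continuous mapping theorem $\pi^{n_\ell} \circ p_\calU^{-1} \Rightarrow \pi \circ p_\calU^{-1}$. Since $\pi^{n_\ell} \circ p_\calU^{-1} = \mu^{n_\ell} \Rightarrow \mu$, uniqueness of weak limits gives $\pi \circ p_\calU^{-1} = \mu$, and the analogous argument for $p_\calV$ gives the other marginal. Hence $\pi \in \Pi(\mu, \nu)$.

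For the stationarity claim, the joint shift $\sigma \times \tau$ is continuous on $\X^\bbN \times \Y^\bbN$ in the product topology. Stationarity of each $\pi^n$ means $\pi^n \circ (\sigma \times \tau)^{-1} = \pi^n$, so the continuous mapping theorem gives $\pi^{n_\ell} \circ (\sigma \times \tau)^{-1} \Rightarrow \pi \circ (\sigma \times \tau)^{-1}$; but this sequence equals $\pi^{n_\ell} \Rightarrow \pi$, and uniqueness of weak limits forces $\pi \circ (\sigma \times \tau)^{-1} = \pi$. Combined with the marginal identification, $\pi \in \J(\mu, \nu)$. No step is a substantial obstacle; the main point is to derive tightness of the couplings from tightness of the two marginal families via the elementary union bound above, after which Prokhorov and continuity do the rest.
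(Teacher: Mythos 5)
Your proof is correct and follows essentially the same route as the paper: the paper delegates the first part (tightness of the couplings from tightness of the marginals, Prokhorov, and marginal identification) to the standard argument in Villani, which is exactly what you spell out, and its stationarity step is your continuous-mapping argument written out explicitly with bounded continuous test functions. No gaps.
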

\begin{proof}
The first part of the lemma follows from basic weak convergence arguments (see for example \cite{villani2008optimal}).
Suppose that the second set of conditions hold.
Then from the first part of the lemma, it suffices to show that $\pi$ is invariant under the joint left-shift $\sigma \times \tau: \X^\bbN \times \Y^\bbN \rightarrow \X^\bbN \times \Y^\bbN$.
Since $\sigma \times \tau$ is continuous, for any bounded and continuous $f: \X^\bbN \times \Y^\bbN \rightarrow \mathbb{R}$, $f \circ (\sigma \times \tau)$ is also bounded and continuous and it follows that
\begin{equation*}
\int f \, d[\pi \circ (\sigma \times \tau)^{-1}] = \int f \circ (\sigma \times \tau) \, d\pi = \lim\limits_{\ell \rightarrow\infty} \int f \circ (\sigma \times \tau) \, d\pi^{n_\ell} = \lim\limits_{\ell \rightarrow\infty} \int f \, d\pi^{n_\ell} = \int f \, d\pi.
\end{equation*}

\noindent Since $f$ was arbitrary, we conclude that $\pi$ is invariant under the left-shift $\sigma\times\tau$ and the second part of the lemma follows.
\end{proof}

\end{document}